\documentclass{article}
\usepackage{amssymb}
\usepackage{amsmath}
\usepackage{hyperref}
\usepackage{graphicx}
\usepackage{float}
\usepackage{amsfonts}
\usepackage{xcolor}
\usepackage{soul}
\usepackage{authblk}

\newcommand{\keywords}[1]{%
  \vspace{0.4cm}
  \noindent\textbf{Keywords:} #1
}

\newtheorem{theorem}{Theorem}
\newtheorem{corollary}[theorem]{Corollary}

\newenvironment{proof}[1][Proof]{\noindent\textbf{#1.} }{\ \rule{0.5em}{0.5em}}

\title{Integral representations of high-order derivatives of Bessel
functions with respect to the order\thanks{2020 \textit{Mathematics Subject Classification}. 33C10, 33F05, 65D20.}
}

\author[1]{J.L. Gonz\'{a}lez Santander}
\author[2]{David A. Harrington}
\author[3]{Angelo Melino}

\affil[1]{\small Department of Mathematics, University of Oviedo\\
        \small C/ Leopoldo Calvo Sotelo 18, Oviedo, 33007, Asturias, Spain\\
        \small \texttt{gonzalezmarjuan@uniovi.es}}
\affil[2]{\small Chemistry Department,  University of Victoria\\
        \small Box 1700 STN CSC,  Victoria, V8W 2Y2, British Columbia, Canada\\
        \small \texttt{dharr@uvic.ca}}
\affil[3]{\small Economics Department, University of Toronto\\
        \small 150 St. George Street, Toronto, M5S 3G7, Ontario, Canada\\
        \small \texttt{angelo.melino@utoronto.ca}}

\begin{document}

\maketitle

\begin{abstract}
We derive integral representations for the $n$-th derivative with respect to the order of Bessel functions of the first, second and third kinds, as well as the modified Bessel function and the Macdonald function. Based on these integral representations, we have developed a highly-efficient algorithm for the numerical evaluation of these derivatives with respect to the order, applicable to any complex argument and any real order.
\end{abstract}

\keywords{Bessel functions, Hankel functions, integral representations, derivatives w.r.t. the order}

\section{Introduction}

Although Bessel functions are widely used in several areas of applied mathematics, there is surprisingly little literature about evaluation of higher-order derivatives of these functions with respect to the order. Here we derive integral representations for $n$-th derivatives with respect to order for Bessel functions, modified Bessel functions and the related Hankel functions. In these integrals, $n$ is typically an exponent of the integration variable, and so numerical evaluation of these integrals should not be significantly more difficult for, say, $n=200$ compared to $n=2$. This is an advantage over some series methods in which large $n$ requires significantly more terms for accurate evaluation.

The rest of the introduction defines the quantities of interest and reviews the relevant literature. In Section \ref{Section: Integral representations}, the integral representations are derived for each of the cases, and in Section \ref{Section: Numerical results}, numerical tests are given to demonstrate that the method is effective over a wide range of orders and arguments for these Bessel functions. Both Mathematica and Maple codes for the derivatives are provided as supplementary material.

Bessel functions are named after F. W. Bessel (1784--1846), who was the first to study them systematically, although important special cases had already been obtained approximately a century earlier by scientists of the stature of Euler, Lagrange, Fourier, and Poisson \cite{Dutka}. In fact, Bessel functions first appear in Daniel Bernoulli's investigation of the small oscillations of a uniform heavy flexible chain \cite[Chap. 4. Sect. 37]{Koronev}. The ubiquity of Bessel functions in many physical applications is probably due to the fact that they arise naturally in boundary value problems of potential theory for cylindrical domains \cite[Chap.6]{Lebedev}. However, applications go far beyond physical problems and appear in many other fields. For example, in probability theory we find the von Mises distribution, which involves the modified Bessel function \cite[Sect. 3.3.6]{Fisher}. Although Bessel functions have been extensively studied since their introduction \cite{Watson}, successive derivatives with respect to the order have only been studied recently.

To review the literature on this topic, we will focus on the expressions for the first derivative of the Bessel function of the first kind with respect to the order. Similar expressions for other Bessel functions are found in the literature cited below. Recall that the Bessel function of the first kind is defined for $z\in \mathbb{C}$ as \cite[Eq. 10.2.2]{NIST}:
\begin{equation}
J_{\nu }\left( z\right) :=\left( \frac{z}{2}\right) ^{\nu
}\sum_{k=0}^{\infty }\frac{\left( -1\right) ^{k}\,\left( z/2\right) ^{2k}}{k!\,\Gamma \left( \nu +k+1\right) }.  \label{Jnu_def}
\end{equation}
In the existing literature, we found some expressions in series form, such as the following one for $\nu \neq -1,-2,\ldots $ \cite[Eq. 10.15.1]{NIST}:
\begin{equation}
\frac{\partial J_{\nu }\left( z\right) }{\partial \nu }=J_{\nu }\left(
z\right) \log \left( \frac{z}{2}\right) \,-\left( \frac{z}{2}\right) ^{\nu
}\sum_{k=0}^{\infty }\left( -1\right) ^{k}\frac{\psi \left( \nu +k+1\right)
\,\left( z/2\right) ^{2k}}{k!\,\Gamma \left( \nu +k+1\right) },
\label{D1_Bessel_J_series}
\end{equation}
where $\psi \left( z\right) $ denotes the digamma function \cite[Eq. 1.3.1]{Lebedev}:
\[
\psi \left( z\right) :=\frac{\Gamma ^{\prime }\left( z\right) }{\Gamma
\left( z\right) },
\]
and $\Gamma \left( z\right) $ denotes the gamma function, defined for $\Re
\left( z\right) >0$ as \cite[Eq. 5.2.1]{NIST}:%
\begin{equation}
\Gamma \left( z\right) :=\int_{0}^{\infty }t^{z-1}e^{-t}dt.
\label{Gamma_def}
\end{equation}
Also, we find the following expression in series form for $\nu \neq -1,-2,\ldots $ \cite[Eq. 3.1.28]{Apelblat}:
\begin{eqnarray}
\frac{\partial J_{\nu }\left( z\right) }{\partial \nu } &=&\left[ \log
\left( \frac{z}{2}\right) -\psi \left( 1+\nu \right) \right] \,J_{\nu
}\left( z\right)   \label{D1_Bessel_J} \\
&&+\frac{\left( z/2\right) ^{\nu }}{\Gamma \left( \nu +1\right) }%
\sum_{k=1}^{\infty }\frac{\,\left( z/2\right) ^{k}\,J_{k}\left( z\right) }{%
\left( k-1\right) !\,\left( k+\nu \right) ^{2}},  \nonumber
\end{eqnarray}
For integer order $\nu =0,\pm 1,\pm 2,\ldots $, we find the following finite sum representation \cite[Sect. 3.2.3]{Oberhettinger}, \cite{ReflectionJL}:
\begin{equation}
\left. \frac{\partial J_{\nu }\left( z\right) }{\partial \nu }\right\vert
_{\nu =\pm m}=\left( \pm 1\right) ^{m}\left[ \frac{\pi }{2}Y_{m}\left(
z\right) \pm \frac{m!}{2}\sum_{k=0}^{m-1}\frac{\,J_{k}\left( z\right) }{%
k!\left( m-k\right) }\left( \frac{z}{2}\right) ^{k-m}\right] ,
\label{D1_Bessel_J_integrer}
\end{equation}
where $Y_{\nu }\left( z\right) $ denotes the Bessel function of the second kind, defined for $z\in \mathbb{C}$ and $\nu \notin \mathbb{Z}$ as \cite[Eqs. 5.4.5-6]{Lebedev}:
\begin{equation}
Y_{\nu }\left( z\right) :=\frac{J_{\nu }\left( z\right) \cos \nu \pi
-J_{-\nu }\left( z\right) }{\sin \nu \pi },  \label{Ynu_def}
\end{equation}
and for $\nu =n\in \mathbb{Z}$ as
\[
Y_{n}\left( z\right) =\lim_{\nu \rightarrow n}Y_{\nu }\left( z\right) .
\]

Integral representations are another approach. For example, van der Pol gave the following formula for $z\in
\mathbb{C}$ and $\nu >0$ \cite{VanderPol}:
\begin{equation}
\frac{\partial J_{\nu }\left( z\right) }{\partial \nu }=\frac{1}{2}%
\int_{0}^{z}\mathrm{Ji}_{0}\left( z-t\right) \,\left[ J_{\nu -1}\left(
z\right) -J_{\nu +1}\left( z\right) \right] \,dt,  \label{VanderPol_formula}
\end{equation}
where the integral Bessel function of order $\nu $ is defined as \cite[Eq.
2.1.62]{Apelblat}:
\[
\mathrm{Ji}_{\nu }\left( z\right) :=-\int_{z}^{\infty }\frac{J_{\nu }\left(
t\right) }{t}\,dt.
\]
For $\nu =0$, we have the following representation \cite{Humbert}:\
\[
\mathrm{Ji}_{0}\left( z\right) =\gamma +\log \left( \frac{z}{2}\right) -\frac{z^{2}}{8}\,_{2}F_{3}\left(
\begin{array}{c}
1,1 \\
2,2,2
\end{array}
;-\frac{z^{2}}{4}\right) ,
\]
where $\gamma =0.577216\ldots $ denotes the Euler-Mascheroni constant, and $_{p}F_{q}$ denotes the generalized hypergeometric function. Recall that for $p\leq q$, the following series converges for $z\in \mathbb{C}$ and defines an entire function called the generalized hypergeometric function
\cite[Eq. 16.2.1]{NIST}:
\begin{equation}
_{p}F_{q}\left(
\begin{array}{c}
a_{1},\ldots ,a_{p} \\
b_{1},\ldots ,b_{q}
\end{array}
;z\right) =\sum_{k=0}^{\infty }\frac{\left( a_{1}\right) _{k}\cdots \left(a_{p}\right) _{k}}{\left( b_{1}\right) _{k}\cdots \left( b_{q}\right) _{k}}
\frac{z^{k}}{k!},  \label{pFq_def}
\end{equation}
where $\left( x\right) _{k}$ denotes the Pochhammer symbol \cite[Eq. 5.2.4-5]{NIST}:
\begin{equation}
\left( x\right) _{k}=\left\{
\begin{array}{ll}
x\left( x+1\right) \left( x+2\right) \cdots \left( x+k-1\right) , & \left(
x\right) _{0}=1, \\
\displaystyle
\frac{\Gamma \left( x+k\right) }{\Gamma \left( x\right) },\quad  & x\neq
0,-1,-2,\ldots
\end{array}
\right.   \label{Pochhammer_def}
\end{equation}
Another integral representation is given in \cite{ApelblatPaper}, valid for $\Re \left( z\right) >0$,
\begin{equation}
\frac{\partial J_{\nu }\left( z\right) }{\partial \nu }=\pi \nu
\int_{0}^{\pi /2}\tan t\ Y_{0}\left( z\sin ^{2}t\right) \,J_{\nu }\left(
z\cos ^{2}t\right) \,dt.  \label{DJnu_int_Apelblat}
\end{equation}

In addition to integral representations, we find closed-form expressions for $\partial J_{\nu }\left( z\right)/\partial \nu $. For example, Brychov \cite{BrychovNew} showed that for $\left\vert \arg z\right\vert <\pi $ and $\nu
\notin \mathbb{Z}$,
\begin{eqnarray}
&&\frac{\partial J_{\nu }\left( z\right) }{\partial \nu }
\label{D_J_nu_Brychov} \\
&=&\frac{\pi \left[ Y_{\nu }\left( z\right) -\cot \pi \nu \ J_{\nu }\left(
z\right) \right] }{2\Gamma ^{2}\left( \nu +1\right) }\left( \frac{z}{2}%
\right) ^{2\nu }\,_{2}F_{3}\left( \left.
\begin{array}{c}
\nu ,\nu +\frac{1}{2} \\
\nu +1,\nu +1,2\nu +1
\end{array}
\right\vert -z^{2}\right)   \nonumber \\
&&+J_{\nu }\left( z\right) \left[ \frac{1}{2\nu }-\psi \left( \nu +1\right)
+\log \left( \frac{z}{2}\right)
\begin{array}{c}
\displaystyle
\\
\displaystyle
\end{array}
\right.   \nonumber \\
&&\quad +\left. \frac{z^{2}}{4\left( \nu ^{2}-1\right) }\,_{3}F_{4}\left(
\left.
\begin{array}{c}
1,1,\frac{3}{2} \\
2,2,2-\nu ,2+\nu
\end{array}
\right\vert -z^{2}\right) \right] ,  \nonumber
\end{eqnarray}
Another closed-form expression valid for $\nu >0$ and$\,\Re \left( z\right) >0$ is derived in \cite{BesselJL},
\begin{eqnarray}
\frac{\partial J_{\nu }\left( z\right) }{\partial \nu } &=&\frac{\pi }{2}
\left[ Y_{\nu }\left( z\right) \frac{\left( z/2\right) ^{2\nu }}{\,\Gamma
^{2}\left( \nu +1\right) }\,_{2}F_{3}\left( \left.
\begin{array}{c}
\nu ,1/2+\nu  \\
2\nu +1,\nu +1,\nu +1
\end{array}
\right\vert -z^{2}\right) \right.   \label{DJnu_Meijer} \\
&&-\left. \frac{\nu J_{\nu }\left( z\right) }{\sqrt{\pi }}
G_{2,4}^{3,0}\left( z^{2}\left\vert
\begin{array}{c}
1/2,1 \\
0,0,\nu ,-\nu
\end{array}
\right. \right) \right] ,\quad .  \nonumber
\end{eqnarray}
where \ $G_{p,q}^{m,n}$ denotes the Meijer-$G$\ function. This function is usually defined by the following Mellin-Barnes integral representation \cite[Eq. 16.17.1]{NIST}:
\begin{eqnarray}
&&G_{p,q}^{m,n}\left( z\left\vert
\begin{array}{c}
a_{1},\ldots ,a_{p} \\
b_{1},\ldots ,b_{q}
\end{array}
\right. \right)   \label{Meijer_G_def} \\
&=&\frac{1}{2\pi i}\int_{L}\frac{\prod_{\ell =1}^{m}\Gamma \left( b_{\ell
}-s\right) \prod_{\ell =1}^{n}\Gamma \left( 1-a_{\ell }+s\right) }{%
\prod_{\ell =m}^{q-1}\Gamma \left( 1-b_{\ell +1}+s\right) \prod_{\ell
=n}^{p-1}\Gamma \left( a_{\ell +1}-s\right) }z^{s}ds,  \nonumber
\end{eqnarray}
where the integration path separates the poles of the factors $\Gamma \left(b_{\ell }-s\right) $ from those of the factors $\Gamma \left( 1-a_{\ell}+s\right) $. Here, $m$ and $n$ are integers such that $0\leq m\leq q$ and $0\leq n\leq p$, and none of $a_{k}-b_{j}$ is a positive integer when $1\leq k\leq n$ and $1\leq j\leq m$.

This literature overview shows that there is no simple expression for the derivative of the Bessel functions with respect to the order valid for all complex arguments $z\in \mathbb{C}$\ and real orders $\nu \in \mathbb{R}$. Furthermore, the expressions given above do not easily generalize for the $n$-th order derivative. This is the case of the integral representations in Eqs. (\ref{VanderPol_formula})\ and (\ref{DJnu_int_Apelblat})\ because the integrand itself contains the Bessel function of the first kind. Regarding closed-form expressions, some formulas are given in \cite{BrychovNew} for the second and third order derivatives, but they are quite complicated and do not seem to be generalizable for higher-order derivatives. However, the series approach is different and Eq. (\ref{D1_Bessel_J_series})\ has been generalized for the $n$-th order derivative in \cite{LiQin}, obtaining for $z\in \mathbb{C}$ and $\nu \neq -1,-2,\ldots $
\begin{eqnarray}
&&\frac{\partial ^{n}J_{\nu }\left( z\right) }{\partial \nu ^{n}}
\label{Dn_Jnu_no_negative_integer} \\
&=&\left( \frac{z}{2}\right) ^{\nu }\sum_{m=0}^{\infty }\frac{\left(
-z^{2}/4\right) ^{m}}{m!\,\Gamma \left( m+\nu +1\right) }\sum_{k=0}^{n}%
\binom{n}{k}\log ^{n-k}\left( \frac{z}{2}\right) \,\mathcal{H}_{k}\left(
m+\nu +1\right) ,  \nonumber
\end{eqnarray}
where $\mathcal{H}_{k}\left( \alpha \right) $ is calculated by recursion:\
\begin{eqnarray}
\mathcal{H}_{0}\left( \alpha \right)  &=&1,  \label{Recursion_H_Li_Qin} \\
\mathcal{H}_{k}\left( \alpha \right)  &=&-\sum_{i=0}^{k-1}\binom{k-1}{i}%
\,\psi ^{\left( i\right) }\left( \alpha \right) \,\mathcal{H}_{k-i-1}\left(
\alpha \right) ,\quad \left( k\geq 1\right) ,  \nonumber
\end{eqnarray}
and $\psi ^{\left( k\right) }\left( z\right) $ denotes the polygamma function \cite[Sect. 5.15]{NIST}, i.e., the $k$-th derivative of the digamma function. A second series expression with a recursive function is also presented in \cite{LiQin}. Despite the fact that Eq. (\ref{Dn_Jnu_no_negative_integer})\ reduces the computation of the $n$-th derivative with respect to the order to a compact formula, the recursions such as Eq.  (\ref{Recursion_H_Li_Qin}) given in \cite{LiQin} indicate that the complexity increases with $n$. Note also that these expressions do not apply for negative integer orders. Furthermore, in \cite{LiQin}, the expressions for the $n$-th derivative with respect to the order of other Bessel functions are based on Eq. (\ref{Dn_Jnu_no_negative_integer}), and the computation time is expected to increase sharply with $n$. In the present study, we bypass these drawbacks using some special
integral representations of Bessel functions.

Below, we summarize the integral representations of Bessel functions found in the literature, on which the derivations in the next section are based.

For $\Re \left( z\right) >0$, the Schl\"{a}fli's integral representation of $J_{\nu }\left( z\right) $ and $Y_{\nu }\left( z\right) $ are given respectively by \cite[Eq. 5.10.8]{Lebedev}:
\begin{eqnarray}
J_{\nu }\left( z\right)  &=&\frac{1}{\pi }\int_{0}^{\pi }\cos \left( z\sin
t-\nu t\right) \,dt  \label{Jnu_integral} \\
&-&\frac{\sin \nu \pi }{\pi }\int_{0}^{\infty }e^{-z\sinh t-\nu t}\,dt,
\nonumber
\end{eqnarray}
and \cite[Eq. 10.9.7]{NIST}:
\begin{eqnarray}
Y_{\nu }\left( z\right)  &=&\frac{1}{\pi }\int_{0}^{\pi }\sin \left( z\sin
t-\nu t\right) \,dt  \label{Ynu_integral} \\
&&-\frac{1}{\pi }\int_{0}^{\infty }\left( e^{\nu t}+e^{-\nu t}\cos \nu \pi
\right) \,e^{-z\sinh t}\,dt.  \nonumber
\end{eqnarray}

The Hankel functions are defined as \cite[Eq. 5.6.1]{Lebedev}:
\begin{eqnarray}
H_{\nu }^{\left( 1\right) }\left( z\right) &:=&J_{\nu }\left( z\right)
+i\,Y_{\nu }\left( z\right) ,  \label{H(1)(z)_def} \\
H_{\nu }^{\left( 2\right) }\left( z\right) &:=&J_{\nu }\left( z\right)
-i\,Y_{\nu }\left( z\right) .  \label{H(2)(z)_def}
\end{eqnarray}
The integral representation of $H_{\nu }^{\left( 1\right) }\left( z\right)$ for $\Im \left( z\right) >0$ is \cite[Eq. 10.9.10]{NIST}:
\begin{equation}
H_{\nu }^{\left( 1\right) }\left( z\right) =\frac{e^{-i\pi \nu /2}}{i\,\pi }
\int_{-\infty }^{+\infty }e^{i\,z\cosh t-\nu t}\,dt,  \label{H(1)_integral}
\end{equation}
and the integral representation of $H_{\nu }^{\left( 2\right) }\left(
z\right) $ for $\Im \left( z\right) <0$ is \cite[Eq. 10.9.11]{NIST}:
\begin{equation}
H_{\nu }^{\left( 2\right) }\left( z\right) =-\frac{e^{i\pi \nu /2}}{i\,\pi }
\int_{-\infty }^{+\infty }e^{-i\,z\cosh t-\nu t}\,dt.  \label{H(2)_integral}
\end{equation}

The modified Bessel function of the first kind, hereinafter referred to simply as the modified Bessel function, is defined for $z\in \mathbb{C}$ as \cite[Eq. 10.25.2]{NIST}:
\begin{equation}
I_{\nu }\left( z\right) :=\left( \frac{z}{2}\right) ^{\nu
}\sum_{k=0}^{\infty }\frac{\,\left( z/2\right) ^{2k}}{k!\,\Gamma \left( \nu
+k+1\right) },  \label{Inu_def}
\end{equation}
and for $\Re \left( z\right) >0$\ its integral representation is given by \cite[Eq. 10.32.4]{NIST}:
\begin{eqnarray}
I_{\nu }\left( z\right)  &=&\frac{1}{\pi }\int_{0}^{\pi }\,e^{z\cos t}\cos
\nu t\,dt  \label{I_nu_integral} \\
&-&\frac{\sin \nu \pi }{\pi }\int_{0}^{\infty }e^{-z\cosh t-\nu t}\,dt.
\nonumber
\end{eqnarray}

For $z\in \mathbb{C}$ and $\nu \notin \mathbb{Z}$, the modified Bessel function of the second kind, which we will refer to as the Macdonald function, is defined as \cite[Eqs. 5.7.2-3]{Lebedev}:
\begin{equation}
K_{\nu }\left( z\right) :=\frac{\pi }{2}\frac{I_{-\nu }\left( z\right)
-I_{\nu }\left( z\right) }{\sin \nu \pi },  \label{K_nu_def}
\end{equation}%
and for $\nu =n\in \mathbb{Z}$ as
\[
K_{n}\left( z\right) =\lim_{\nu \rightarrow n}K_{\nu }\left( z\right) .
\]%
For $\Re \left( z\right) >0$, we have the following integral representation
\cite[Eq. 5.10.23]{Lebedev}:%
\begin{equation}
K_{\nu }\left( z\right) =\frac{1}{2}\int_{-\infty }^{+\infty }e^{\nu
t-z\cosh t}\,dt.  \label{K_nu_integral}
\end{equation}

Throughout this paper, we consider $n=0,1,2,\ldots $ $z\in \mathbb{C} $, and $\nu \in \mathbb{R}$.

\section{Integral representations \label{Section: Integral representations}}

\subsection{Bessel function of the first kind \label{Section: BesselJ}}

\begin{theorem}
For $\Re \left( z\right) >0$, the following formula for the $n$-th\ order
derivative holds (see \cite{ReflectionJL}):
\begin{eqnarray}
\frac{\partial ^{n}J_{\nu }\left( z\right) }{\partial \nu ^{n}} &=&\frac{1}{%
\pi }\left\{ \int_{0}^{\pi }t^{n}\cos \left( z\sin t-\nu t-\frac{\pi }{2}%
n\right) \,dt\right.  \label{DnJnuRez>0} \\
&&-\left. \int_{0}^{\infty }\Im \left[ \left( i\pi -t\right) ^{n}e^{i\pi \nu
}\right] \,e^{-z\sinh t-\nu t}\,dt\right\} .  \nonumber
\end{eqnarray}
\end{theorem}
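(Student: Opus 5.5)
The plan is to differentiate Schläfli's representation (\ref{Jnu_integral}) $n$ times under the integral sign with respect to $\nu$.

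\textbf{First integral.} We have $\cos(z\sin t-\nu t)=\Re\, e^{i(z\sin t-\nu t)}$ only when $z$ is real, so it is better to work with the identity
\[
\frac{\partial^{n}}{\partial\nu^{n}}\cos(a-\nu t)=t^{n}\cos\left(a-\nu t-\frac{\pi}{2}n\right),
\]
valid for any complex $a$. It follows by induction from $\frac{d}{d\nu}\cos(a-\nu t+\phi)=t\sin(a-\nu t+\phi)=t\cos(a-\nu t+\phi-\pi/2)$. With $a=z\sin t$, this gives the first term of (\ref{DnJnuRez>0}) at once. The interval $[0,\pi]$ is compact and the integrand is entire in $\nu$, so differentiating under the integral is immediate.

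\textbf{Second integral.} Write $\sin\nu\pi\, e^{-\nu t}=\Im\left[e^{i\pi\nu}\right]e^{-\nu t}$. Since $\nu$ and $t$ are real, $e^{-\nu t}$ is real, so
\[
\sin\nu\pi\, e^{-\nu t}=\Im\left[e^{(i\pi-t)\nu}\right].
\]
Differentiating $n$ times in $\nu$ commutes with $\Im$ for real $\nu$, and gives $\Im\left[(i\pi-t)^{n}e^{(i\pi-t)\nu}\right]=\Im\left[(i\pi-t)^{n}e^{i\pi\nu}\right]e^{-\nu t}$. The remaining factor $e^{-z\sinh t}$ does not depend on $\nu$, so we obtain the second term of (\ref{DnJnuRez>0}), including the prefactor $1/\pi$ and the minus sign.

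\textbf{Justification.} The main obstacle is justifying differentiation under the improper integral over $[0,\infty)$. For $\nu$ in a compact interval $[\nu_0-\delta,\nu_0+\delta]$ and $\Re(z)>0$, the $n$-th $\nu$-derivative of the integrand is bounded in modulus by
\[
(\pi+t)^{n}e^{-\Re(z)\sinh t+(|\nu_0|+\delta)t}.
\]
This bound is integrable on $[0,\infty)$ because $\sinh t$ grows like $e^{t}/2$, which dominates any linear exponential and polynomial. Dominated convergence, via the mean value theorem applied to the difference quotients at each order, then legitimizes termwise differentiation for every $n$. Equivalently, one can note that the integral is uniformly convergent on compacta and so defines a smooth function of $\nu$.

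Finally, (\ref{Jnu_integral}) holds for all real $\nu$ when $\Re(z)>0$. Hence the identity holds for all real $\nu$, including integers, with no exceptional orders.
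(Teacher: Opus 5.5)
Your proof is correct and follows the paper's route: differentiate Schläfli's representation (\ref{Jnu_integral}) $n$ times under the integral sign, writing $\sin\nu\pi\,e^{-\nu t}=\Im\bigl[e^{\nu(i\pi-t)}\bigr]$ for the second term. Two things you do are actually an improvement on the paper. First, your identity $\partial_\nu^n\cos(a-\nu t)=t^n\cos(a-\nu t-n\pi/2)$ for complex $a$ is cleaner than the paper's rewriting $\cos(z\sin t-\nu t)=\Re\bigl[e^{i(z\sin t-\nu t)}\bigr]$, which is literally valid only for real $z$. Second, your domination bound supplies the justification for differentiating under the improper integral that the paper leaves implicit.
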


\begin{proof}
Rewrite the integral representation given in Eq. (\ref{Jnu_integral}) as follows:
\begin{eqnarray*}
J_{\nu }\left( z\right) &=&\frac{1}{\pi }\int_{0}^{\pi }\Re \left[
e^{i\,\left( z\sin t-\nu t\right) }\right] \,dt \\
&&-\frac{1}{\pi }\int_{0}^{\infty }\Im \left[ e^{\nu \left( i\pi -t\right) }%
\right] \,e^{-z\sinh t}\,dt.
\end{eqnarray*}%
Taking the $n$-th derivative with respect to the order, we obtain
\begin{eqnarray*}
\frac{\partial ^{n}J_{\nu }\left( z\right) }{\partial \nu ^{n}} &=&\frac{1}{%
\pi }\int_{0}^{\pi }\Re \left[ \left( -it\right) ^{n}\,e^{i\,\left( z\sin
t-\nu t\right) }\right] \,dt \\
&&-\frac{1}{\pi }\int_{0}^{\infty }\Im \left[ \left( i\pi -t\right)
^{n}\,e^{\nu \left( i\pi -t\right) }\right] \,e^{-z\sinh t}\,dt.
\end{eqnarray*}%
The proof is completed by taking into account that $\left( -it\right) ^{n}=t^{n}\exp \left( -i\pi
\frac{n}{2}\right) $.
\end{proof}

\begin{theorem}
For $\Re \left( z\right) <0$, the following formula for the $n$-th\ order
derivative holds:
\begin{eqnarray}
\frac{\partial ^{n}J_{\nu }\left( z\right) }{\partial \nu ^{n}}&=&\frac{1}{2\pi }\left\{ \int_{-\pi }^{\pi }r_{+}^{n}\left( z,t\right)
\,e^{\nu \,r_{+}\left( z,t\right) +i\,z\sin t}\,dt\right.   \label{DnJnuRez<0}  \\
&&+\left. i\int_{0}^{\infty }\left[ s_{+}^{n}\left( z,t\right) \,e^{\nu
\,s_{+}\left( z,t\right) }-s_{-}^{n}\left( z,t\right) \,e^{\nu \,s_{-}\left(
z,t\right) }\right] \,e^{z\,\sinh t}\,dt\right\} ,  \notag
\end{eqnarray}
where
\begin{eqnarray}
\mathrm{a}\left( z\right) &:= &\arg z-\arg \left( -z\right) ,
\label{a(z)_def} \\
r_{\pm }\left( z,t\right) &:= &i\,\left( \mathrm{a}\left( z\right) \pm
t\right) ,  \label{r(z,t)_def} \\
s_{\pm }\left( z,t\right) &:= &-t+i\,\left( \mathrm{a}\left( z\right) \pm
\pi \right) .  \label{s(z,t)_def}
\end{eqnarray}
\end{theorem}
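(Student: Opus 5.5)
We reduce to the case $\Re(z)>0$ by the analytic continuation formula for $J_\nu$ across the imaginary axis. Since $\Re(-z)>0$ when $\Re(z)<0$, we write $z=e^{i\mathrm{a}(z)}(-z)$ and note that $\mathrm{a}(z)=\arg z-\arg(-z)=\pm\pi$ (with $+\pi$ for $\Im z\geq 0$ and $-\pi$ for $\Im z<0$, using principal arguments). The series definition (\ref{Jnu_def}) gives
\[
J_\nu(z)=\left(\frac{z}{2}\right)^{\nu}\sum_{k\ge0}\frac{(-1)^k(z/2)^{2k}}{k!\,\Gamma(\nu+k+1)}
= e^{i\nu\,\mathrm{a}(z)}J_\nu(-z),
\]
because $(z/2)^\nu=e^{i\nu\mathrm{a}(z)}(-z/2)^\nu$ and the even powers are unchanged. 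This identity is the key starting point.

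Next we insert Schl\"afli's representation (\ref{Jnu_integral}) for $J_\nu(-z)$, which is valid since $\Re(-z)>0$. We write $\cos(\cdot)$ as half the sum of two exponentials and fold the integral over $[0,\pi]$ into one over $[-\pi,\pi]$:
\[
\frac1\pi\int_0^\pi\cos(-z\sin t-\nu t)\,dt=\frac1{2\pi}\int_{-\pi}^{\pi}e^{i z\sin t+i\nu t}\,dt,
\]
using the substitution $t\to -t$ in one of the two halves. Multiplying by $e^{i\nu\mathrm{a}(z)}$ then yields $\frac{1}{2\pi}\int_{-\pi}^{\pi}e^{\nu r_+(z,t)+iz\sin t}\,dt$, with $r_+=i(\mathrm{a}+t)$.

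For the second term we use $\sin\nu\pi=(e^{i\nu\pi}-e^{-i\nu\pi})/(2i)$, so that
\[
-\frac{\sin\nu\pi}{\pi}e^{i\nu\mathrm{a}}\int_0^\infty e^{z\sinh t-\nu t}\,dt
=\frac{i}{2\pi}\int_0^\infty\left[e^{\nu s_+}-e^{\nu s_-}\right]e^{z\sinh t}\,dt,
\]
with $s_\pm=-t+i(\mathrm{a}\pm\pi)$. This gives the $n=0$ case of (\ref{DnJnuRez<0}).

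Finally, we differentiate $n$ times under the integral sign in $\nu$; each exponential $e^{\nu w}$ produces the factor $w^n$, which gives the stated formula. The main point needing care is justifying this differentiation on $[0,\infty)$. Since $\Re(z)<0$, we have $|e^{z\sinh t}|=e^{\Re(z)\sinh t}$, which decays super-exponentially. This dominates $|s_\pm|^n e^{-\nu t}$ uniformly for $\nu$ in compact sets, so dominated convergence applies. On $[-\pi,\pi]$ the integrand is smooth and the domain is compact, so there is no difficulty there. The only other subtlety is the branch bookkeeping for $\mathrm{a}(z)$: we must check that $(z/2)^\nu=e^{i\nu\mathrm{a}(z)}(-z/2)^\nu$ holds with principal branches, which follows directly from the definition of $\mathrm{a}(z)$ as the difference of principal arguments.
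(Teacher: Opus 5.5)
Your proposal is correct and follows the paper's proof essentially step for step. Both arguments use the same chain: the reflection identity $J_\nu(z)=e^{i\nu\,\mathrm{a}(z)}J_\nu(-z)$ from the series, then Schl\"afli's representation applied to $J_\nu(-z)$, then folding the $[0,\pi]$ integral onto $[-\pi,\pi]$ and rewriting $\sin\nu\pi$ in exponentials to produce $r_+$ and $s_\pm$, and finally differentiating $n$ times in $\nu$. Your dominated-convergence justification for differentiating under the integral sign, and your check of the principal-branch bookkeeping for $\mathrm{a}(z)=\pm\pi$, fill in details the paper leaves implicit.
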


\begin{proof}
From the definitions of $J_{\nu }\left( z\right) $ (Eq. (\ref{Jnu_def}))\ and $\mathrm{a}\left( z\right) $ (Eq. (\ref{a(z)_def})), we have
\begin{equation}
\frac{J_{\nu }\left( z\right) }{J_{\nu }\left( -z\right) }=z^{\nu }\left(
-z\right) ^{-\nu }=e^{i\nu \,\mathrm{a}\left( z\right) }.
\label{Reflection_Jnu}
\end{equation}%
Taking into account Eq. (\ref{Reflection_Jnu}) and the integral representation given in Eq. (\ref{Jnu_integral}), we have for $\Re \left( z\right) <0$
\begin{eqnarray}
&&J_{\nu }\left( z\right)   \label{Jnu_intermedio} \\
&=&e^{i\nu \,\mathrm{a}\left( z\right) }\,J_{\nu }\left( -z\right)   \notag
\\
&=&\frac{e^{i\nu \,\mathrm{a}\left( z\right) }}{2\pi }\int_{0}^{\pi }\left[
e^{i\left( -z\sin t-\nu t\right) }+e^{-i\left( -z\sin t-\nu t\right) }\right]
\,dt  \notag \\
&&-\frac{1}{\pi }\left( \frac{e^{i\nu \pi }-e^{-i\nu \pi }}{2i}\right)
\,e^{i\nu \,\mathrm{a}\left( z\right) }\int_{0}^{\infty }e^{z\sinh t-\nu
t}\,dt.  \notag
\end{eqnarray}
According to the definitions given in Eq. (\ref{r(z,t)_def})\ and Eq. (\ref{s(z,t)_def}), rewrite Eq. (\ref{Jnu_intermedio})\ as follows:
\begin{eqnarray*}
J_{\nu }\left( z\right)  &=&\frac{1}{2\pi }\left\{ \int_{0}^{\pi }\left[
e^{\nu \,r_{+}\left( z,t\right) +i\,z\sin t}+e^{\nu \,r_{-}\left( z,t\right)
-i\,z\sin t}\right] \,dt\right.  \\
&&+\left. i\int_{0}^{\infty }\left[ e^{\nu \,s_{+}\left( z,t\right) }-e^{\nu
\,s_{-}\left( z,t\right) }\right] \,e^{z\sinh t}\,dt\right\}  \\
&=&\frac{1}{2\pi }\left\{ \int_{-\pi }^{\pi }e^{\nu \,r_{+}\left( z,t\right)
+i\,z\sin t}\,dt\right.  \\
&&+\left. i\int_{0}^{\infty }\left[ e^{\nu \,s_{+}\left( z,t\right) }-e^{\nu
\,s_{-}\left( z,t\right) }\right] \,e^{z\sinh t}\,dt\right\} .
\end{eqnarray*}%
\newline
Finally, take the $n$-th derivative with respect to the order to complete the proof.
\end{proof}

\begin{theorem}
For $x>0$, the following formula for the $n$-th\ order derivative holds:
\begin{eqnarray}
\frac{\partial ^{n}J_{\nu }\left( i\,x\right) }{\partial \nu ^{n}} &=&\frac{1%
}{2\pi }\left\{ \int_{0}^{\pi }\left[ p_{+}^{n}\left( t\right)
\,e^{p_{+}\left( t\right) \,\,\nu }+p_{-}^{n}\left( t\right)
\,e^{p_{-}\left( t\right) \,\,\nu }\right] \,e^{x\cos t}\,dt\right.
\label{DnJ(ix)} \\
&&+\left. i\int_{0}^{\infty }\left[ q_{+}^{n}\left( t\right)
\,e^{q_{+}\left( t\right) \,\nu }-q_{-}^{n}\left( t\right) \,e^{q_{-}\left(
t\right) \,\nu }\right] \,e^{-x\cosh t}\,dt\right\} ,  \nonumber
\end{eqnarray}
where we have set
\begin{eqnarray}
p_{\pm }\left( t\right) &:=&i\left( \frac{\pi }{2}\pm t\right) ,
\label{p+-(t)_def} \\
q_{\pm }\left( t\right) &:=&i\left( \frac{1}{2}\pm 1\right) \pi -t.
\label{q+-(t)_def}
\end{eqnarray}
\end{theorem}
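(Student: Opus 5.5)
The plan is to reduce $J_{\nu}(ix)$ to the modified Bessel function and then use the integral representation (\ref{I_nu_integral}). From the series definitions (\ref{Jnu_def}) and (\ref{Inu_def}), for $x>0$ we have $(ix/2)^{\nu}=e^{i\pi\nu/2}(x/2)^{\nu}$ with the principal branch, and $(ix/2)^{2k}=(-1)^{k}(x/2)^{2k}$. Hence
\[
J_{\nu}(i\,x)=e^{i\pi\nu/2}\,I_{\nu}(x).
\]
Since $\Re(x)>0$, we may insert (\ref{I_nu_integral}) for $I_{\nu}(x)$.

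Next we put every $\nu$-dependence into pure exponentials. In the first integral, write $\cos\nu t=\frac12\left(e^{i\nu t}+e^{-i\nu t}\right)$. Multiplying by $e^{i\pi\nu/2}$ gives $\frac12\left(e^{\nu\,p_{+}(t)}+e^{\nu\,p_{-}(t)}\right)$, with $p_{\pm}$ as in (\ref{p+-(t)_def}). In the second integral, write $\sin\nu\pi=\frac{1}{2i}\left(e^{i\pi\nu}-e^{-i\pi\nu}\right)$, so that
\[
-\frac{\sin\nu\pi}{\pi}\,e^{i\pi\nu/2}e^{-\nu t}=\frac{i}{2\pi}\left(e^{\nu(3i\pi/2-t)}-e^{\nu(-i\pi/2-t)}\right)=\frac{i}{2\pi}\left(e^{\nu\,q_{+}(t)}-e^{\nu\,q_{-}(t)}\right),
\]
with $q_{\pm}$ as in (\ref{q+-(t)_def}). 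This gives
\[
J_{\nu}(ix)=\frac{1}{2\pi}\left\{\int_0^{\pi}\left[e^{p_+\nu}+e^{p_-\nu}\right]e^{x\cos t}\,dt+i\int_0^{\infty}\left[e^{q_+\nu}-e^{q_-\nu}\right]e^{-x\cosh t}\,dt\right\}.
\]

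Finally we differentiate $n$ times under the integral sign, as in the two previous theorems. Each exponential $e^{c\nu}$ becomes $c^{n}e^{c\nu}$, which yields (\ref{DnJ(ix)}).

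The only point needing care is justifying the interchange of differentiation and integration on $[0,\infty)$. Here $|q_{\pm}^{n}e^{q_{\pm}\nu}|\le (t+3\pi/2)^{n}e^{-\nu t}$, and $e^{-x\cosh t}$ decays doubly exponentially. So the integrand is dominated locally uniformly in $\nu$ by an integrable function, and dominated convergence applies. The finite integral on $[0,\pi]$ poses no issue.
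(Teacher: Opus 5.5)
Your proposal is correct and follows the paper's proof: you reduce to $J_{\nu}(ix)=e^{i\pi\nu/2}I_{\nu}(x)$, insert the integral representation (\ref{I_nu_integral}), rewrite $\cos\nu t$ and $\sin\nu\pi$ as exponentials in $p_{\pm}$ and $q_{\pm}$, and differentiate $n$ times under the integral sign. Your dominated-convergence bound $(t+3\pi/2)^{n}e^{-\nu t}e^{-x\cosh t}$ on $[0,\infty)$ supplies the justification for that interchange, which the paper leaves implicit.
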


\begin{proof}
From the definitions of $J_{\nu }\left( z\right) $ (Eq. (\ref{Jnu_def})) and $I_{\nu }\left( z\right) $ (Eq. (\ref{Inu_def})), we have
\[
\frac{J_{\nu }\left( i\,z\right) }{I_{\nu }\left( z\right) }=\left(
i\,z\right) ^{\nu }z^{-\nu }=e^{i\nu \left[ \arg \left( iz\right) -\arg z
\right] }.
\]
Thus for $x>0$
\[
J_{\nu }\left( i\,x\right) =e^{i\pi \nu /2}I_{\nu }\left( x\right) .
\]
Now, apply the integral representation of $I_{\nu }\left( z\right) $ in Eq. (\ref{I_nu_integral}) and the definitions given in Eqs. (\ref{p+-(t)_def})-(\ref{q+-(t)_def}) to obtain
\begin{eqnarray*}
J_{\nu }\left( i\,x\right) &=&\frac{1}{2\pi }\left\{ \int_{0}^{\pi }\left[
\,e^{p_{+}\left( t\right) \,\,\nu }+\,e^{p_{-}\left( t\right) \,\,\nu }%
\right] \,e^{x\cos t}\,dt\right. \\
&&+\left. i\int_{0}^{\infty }\left[ e^{q_{+}\left( t\right) \,\nu
}-\,e^{q_{-}\left( t\right) \,\nu }\right] \,e^{-x\cosh t}\,dt\right\} .
\end{eqnarray*}
Finally, take the $n$-th derivative with respect to the order to complete the proof.
\end{proof}

\begin{corollary}
For $x>0$, the following formula for the $n$-th order derivative holds:
\[
\frac{\partial ^{n}J_{\nu }\left( -i\,x\right) }{\partial \nu ^{n}}=%
\overline{\frac{\partial ^{n}J_{\nu }\left( i\,x\right) }{\partial \nu ^{n}}.}
\]
\end{corollary}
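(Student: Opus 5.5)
The plan is to use the Schwarz reflection property of $J_{\nu}$ for real order. By the power series in Eq. (\ref{Jnu_def}), for real $\nu$ and $z$ off the branch cut $(-\infty,0]$ of $(z/2)^{\nu}$, we have
\[
J_{\nu}\left(\overline{z}\right)=\overline{J_{\nu}(z)}.
\]
Indeed, the coefficients $(-1)^k/(k!\,\Gamma(\nu+k+1))$ are real. Also $(\overline{z}/2)^{\nu}=\exp(\nu\log(\overline{z}/2))=\overline{(z/2)^{\nu}}$, because the principal logarithm satisfies $\log\overline{w}=\overline{\log w}$ away from the negative real axis. Taking $z=ix$ with $x>0$ gives $\overline{z}=-ix$, and both points lie off the cut. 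Hence $J_{\nu}(-ix)=\overline{J_{\nu}(ix)}$ for all real $\nu$.

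Next I would differentiate this identity $n$ times with respect to $\nu$. Since $\nu$ is real, the map $\nu\mapsto \overline{J_{\nu}(ix)}$ is the complex conjugate of a smooth function of a real variable. Differentiation in a real parameter commutes with conjugation, because conjugation is $\mathbb{R}$-linear and continuous; it suffices to split into real and imaginary parts. Therefore
\[
\frac{\partial^n J_{\nu}(-ix)}{\partial\nu^n}=\overline{\frac{\partial^n J_{\nu}(ix)}{\partial\nu^n}},
\]
which is the claim.

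As a consistency check, the same result should follow from Eq. (\ref{DnJ(ix)}) by mimicking its proof with $J_{\nu}(-ix)=e^{-i\pi\nu/2}I_{\nu}(x)$. Here $\arg(-ix)-\arg x=-\pi/2$, and $I_{\nu}(x)$ is real for $x>0$ and real $\nu$. This reproduces the integral representation with $p_{\pm},q_{\pm}$ replaced by their conjugates, which is the conjugate of Eq. (\ref{DnJ(ix)}).

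The only delicate point is the branch of $z^{\nu}$. One must check that $\pm ix$ avoid the cut and that $\arg(\overline{z})=-\arg z$ there, so that the reflection identity holds exactly rather than up to a factor $e^{2\pi i\nu}$. Everything else is routine.
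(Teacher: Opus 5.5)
Your proposal is correct and follows essentially the same route as the paper: apply the conjugation symmetry $J_\nu(\bar z)=\overline{J_\nu(z)}$ at $z=ix$ and differentiate $n$ times in the real variable $\nu$. The paper cites this symmetry from the DLMF, whereas you verify it from the power series. Your additional checks are sound but go beyond what the paper's proof contains: the branch check, the commutation of $\partial_\nu$ with conjugation, and the consistency check via $J_\nu(-ix)=e^{-i\pi\nu/2}I_\nu(x)$.
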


\begin{proof}
Applying the property for $z \notin \left(-\infty, 0 \right] $ \cite[Eq. 10.11.9]{NIST}:
\begin{equation}
J_{\nu }\left( \bar{z}\right) =\overline{J_{\nu }\left( z\right) },
\label{Jnu(conj(z))}
\end{equation}%
we obtain for $x>0$,
\[
J_{\nu }\left( -i\,x\right) =J_{\nu }\left( \overline{i\,x}\right) = \overline{J_{\nu }\left( i\,x\right) }.
\]
Finally, take the $n$-th derivative with respect to the order to complete the proof.
\end{proof}

\begin{theorem}
\label{Theorem: DnBesselJ_z=0} For $z=0$, we have the following cases for the $n$-th order derivative:
\[
\frac{\partial ^{n}J_{\nu }\left( 0\right) }{\partial \nu ^{n}}=\left\{
\begin{array}{cc}
0, & \nu >0, \\
\mathrm{Divergent,} & \nu \leq 0.
\end{array}
\right.
\]
\end{theorem}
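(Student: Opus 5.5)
We work directly from the series definition Eq. (\ref{Jnu_def}), treating $J_\nu(z)$ as a function of $\nu$ for small real $z>0$ and then letting $z\to 0^+$. The question is ill-posed as a literal evaluation at $z=0$: $(z/2)^\nu$ equals $0$ only for $\nu>0$ and is singular otherwise. So we interpret $\partial^n J_\nu(0)/\partial\nu^n$ as $\lim_{z\to0}\partial^n J_\nu(z)/\partial\nu^n$, and we use Eq. (\ref{Dn_Jnu_no_negative_integer}) from \cite{LiQin} when $\nu$ is not a negative integer.

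\emph{Case $\nu>0$.} In Eq. (\ref{Dn_Jnu_no_negative_integer}) every term carries the factor $(z/2)^{\nu+2m}\log^{n-k}(z/2)$. The coefficients $\mathcal H_k(m+\nu+1)/\Gamma(m+\nu+1)$ are finite because $m+\nu+1>1$, and they grow at most polynomially in $\log m$ times $1/\Gamma$. Hence the $m$-series converges uniformly for $|z|\le 1$, and it is dominated by $C\,|z/2|^{\nu}\,|\log(z/2)|^{n}$. Since $\nu>0$, $x^\nu\log^n x\to0$ as $x\to0^+$, so the whole expression tends to $0$. This is the step needing the most care. The cleanest bound is $|\mathcal H_k(\alpha)|\le C_k(1+\log\alpha)^k$, which follows by induction from Eq. (\ref{Recursion_H_Li_Qin}) using $\psi(\alpha)\sim\log\alpha$ and the boundedness of $\psi^{(i)}(\alpha)$ for $i\ge1$, $\alpha\ge1$. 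Alternatively, one can differentiate the term $(z/2)^{\nu+2m}/\Gamma(\nu+m+1)$ $n$ times by Cauchy's formula on a small circle in $\nu$ that stays in $\Re\nu>\nu_0/2$. This gives the same uniform bound without the recursion, and I would use it if the recursion bookkeeping becomes messy.

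\emph{Case $\nu=0$.} Take $n\ge1$. The $m=0$ term gives $\frac{d^n}{d\nu^n}\big[(z/2)^\nu/\Gamma(\nu+1)\big]_{\nu=0}$. This contains the term $\log^n(z/2)$ with coefficient $1$, while all other contributions are lower powers of $\log$ or $O(z^2\log^n z)$. It therefore diverges as $z\to0$. The case $n=0$ gives $J_0(0)=1$, which is not $0$; we note that the statement's ``divergent'' is intended for $n\ge1$, or else we treat it as a boundary remark.

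\emph{Case $\nu<0$, non-integer.} The $m=0$ term is $(z/2)^\nu$ times a polynomial in $\log(z/2)$. Its leading coefficient is $1/\Gamma(\nu+1)\neq0$, so its modulus is at least $c\,|z|^{\nu}\to\infty$. The remaining terms are $O(|z|^{\nu+2}\log^n|z|)$ and are of lower order.

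\emph{Case $\nu=-N$, a negative integer.} Here we avoid Eq. (\ref{Dn_Jnu_no_negative_integer}) and use the entire function $1/\Gamma$ directly in Eq. (\ref{Jnu_def}). The first $N$ terms, $m<N$, have $1/\Gamma$ vanishing at $\nu=-N$. Differentiating $n\ge1$ times, the term $m=0$ yields $(z/2)^{-N}\,\frac{d}{d\nu}(1/\Gamma(\nu+1))|_{\nu=-N}\cdot n\log^{n-1}(z/2)+\dots$. The derivative $\frac{d}{d\nu}(1/\Gamma)$ at a pole of $\Gamma$ is nonzero, namely $(-1)^{N-1}(N-1)!$. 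This term dominates like $|z|^{-N}|\log z|^{n-1}$, so the expression diverges. For $n=0$ one has $J_{-N}(0)=0$, so divergence again concerns $n\ge1$.

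The main obstacle is making the uniform tail estimates rigorous, together with the edge cases $n=0$ at $\nu\le0$. The Cauchy-integral bound in $\nu$ handles the tail estimates uniformly across all cases.
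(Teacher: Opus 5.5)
Your proposal is correct and follows the paper's route. Like the paper, you isolate the $m=0$ term $(z/2)^\nu/\Gamma(\nu+1)$ of the series and read off the $\log^{n}(z/2)\,(z/2)^{\nu}$ behaviour. For $\nu=-N$ you use, as the paper does via the pole of $\Gamma$, that $1/\Gamma(\nu+1)$ has a simple zero with derivative $(-1)^{N-1}(N-1)!$, which yields the dominant $n\log^{n-1}(z/2)\,(z/2)^{-N}$ term. The paper obtains this by differentiating the asymptotic relation $J_\nu(z)\approx(z/2)^\nu/\Gamma(1+\nu)$ directly; you instead justify dominance by bounding the differentiated tail, via the Li--Qin series or Cauchy's estimate in $\nu$. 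You also correctly flag that for $n=0$ the ``divergent'' claim fails at $\nu=0$ and $\nu=-N$, since $J_0(0)=1$ and $J_{-N}(0)=0$.
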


\begin{proof}
According to \cite[Eq. 10.7.3]{NIST}:
\begin{equation}
J_{\nu }\left( z\right) \approx \frac{\left( z/2\right) ^{\nu }}{\Gamma
\left( 1+\nu \right) },\quad z\rightarrow 0,\ \nu \neq -1,-2,\ldots
\label{J_nu(z)_z->0}
\end{equation}
Therefore, taking the $n$-th\ order derivative, the dominant term is given by the logarithmic function as $z\rightarrow 0$, i.e.,
\[
\frac{\partial ^{n}J_{\nu }\left( z\right) }{\partial \nu ^{n}}\approx \frac{%
\log ^{n}\left( z/2\right) }{\Gamma \left( 1+\nu \right) }\,\left(
z/2\right) ^{\nu },\quad z\rightarrow 0,\ \nu \neq -1,-2,\ldots
\]
Consequently,
\[
\left. \frac{\partial ^{n}J_{\nu }\left( 0\right) }{\partial \nu ^{n}}%
\right\vert _{\nu >0}=0,
\]%
but
\[
\lim_{z\rightarrow 0}\left\vert \frac{\partial ^{n}J_{\nu }\left( z\right) }{%
\partial \nu ^{n}}\right\vert =\infty ,\quad \nu \leq 0,\ \nu \neq
-1,-2,\ldots
\]
For the case $\nu =-1,-2,\ldots $, perform the change of variable $\nu
=-m+\epsilon $ in Eq. (\ref{J_nu(z)_z->0}),
\[
J_{-m+\epsilon }\left( z\right) \approx \frac{\left( z/2\right)
^{-m+\epsilon }}{\Gamma \left( 1-m+\epsilon \right) },\quad z\rightarrow 0,
\]
and therefore for $m=1,2,\ldots $, we have
\begin{equation}
\left. \frac{\partial ^{n}J_{\nu }\left( z\right) }{\partial \nu ^{n}} \right\vert _{\nu =-m}\approx \frac{\partial ^{n}}{\partial \epsilon ^{n}}\left[ \frac{\left( z/2\right) ^{-m+\epsilon }}{\Gamma \left( 1-m+\epsilon \right) }\right] _{\epsilon =0},\quad z\rightarrow 0.  \label{DJ1}
\end{equation}
However, according to \cite[Eq. 1.1.5]{Lebedev},
\[
\Gamma \left( z\right) \approx \frac{\left( -1\right) ^{k}}{k!}\frac{1}{z+k}
,\quad z\rightarrow -k,\ \left( k=0,1,2,\ldots \right) ,
\]
and therefore
\begin{equation}
\frac{\partial ^{n}}{\partial \epsilon ^{n}}\left[ \frac{\left( z/2\right)^{-m+\epsilon }}{\Gamma \left( 1-m+\epsilon \right) }\right] \approx \left(\frac{z}{2}\right) ^{-m}\left( -1\right) ^{m-1}\left( m-1\right) !\frac{\partial ^{n}}{\partial \epsilon ^{n}}\left[ \epsilon \left( z/2\right)^{\epsilon }\right] ,\quad \epsilon \rightarrow 0.  \label{DJ2}
\end{equation}
Now, apply Leibniz's derivative formula \cite[Eq. 1.4.12]{NIST} to obtain
\begin{eqnarray}
\frac{\partial ^{n}}{\partial \epsilon ^{n}}\left[ \epsilon \left(
z/2\right) ^{\epsilon }\right] _{\epsilon =0} &=&\left[ \epsilon \left(
\frac{z}{2}\right) ^{\epsilon }\log \left( \frac{z}{2}\right) +n\left( \frac{%
z}{2}\right) ^{\epsilon }\log ^{n-1}\left( \frac{z}{2}\right) \right]
_{\epsilon =0} \\
&=&\log ^{n-1}\left( \frac{z}{2}\right) .  \label{DJ3}
\end{eqnarray}
Therefore, according to Eqs. (\ref{DJ1}), (\ref{DJ2})\ and (\ref{DJ3}), we conclude that
\[
\lim_{z\rightarrow 0}\left\vert \frac{\partial ^{n}J_{\nu }\left( z\right) }{\partial \nu ^{n}}\right\vert =\infty ,\quad \nu =-1,-2,\ldots
\]
and the proof is completed.
\end{proof}

\subsection{Bessel function of the second kind}

\begin{theorem}
For $\Re \left( z\right) >0$, the following formula for the $n$-th\ order derivative holds:
\begin{eqnarray}
\frac{\partial ^{n}Y_{\nu }\left( z\right) }{\partial \nu ^{n}} &=&\frac{1}{\pi }\left\{ \int_{0}^{\pi }t^{n}\sin \left( z\sin t-\nu t-\frac{n\pi }{2}\right) \,\,dt\right.  \label{DnYnuRez>0} \\
&&-\left. \int_{0}^{\infty }\left\{ t^{n}e^{\nu t}+\Re \left[ \left( i\pi -t\right) ^{n}e^{i\pi \nu }\right] \,e^{-\nu t}\right\} \,e^{-z\sinh t}\,dt\right\} .  \nonumber
\end{eqnarray}
\end{theorem}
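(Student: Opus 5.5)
The plan is to mirror the proof of Eq. (\ref{DnJnuRez>0}), starting now from the Schl\"{a}fli representation of $Y_{\nu }\left( z\right) $ in Eq. (\ref{Ynu_integral}), valid for $\Re \left( z\right) >0$. First I would rewrite each $\nu$-dependent factor as an exponential in $\nu$, so that differentiation with respect to the order simply brings down a power of the exponent. For the finite integral, $\sin \left( z\sin t-\nu t\right) =\Im \left[ e^{i\left( z\sin t-\nu t\right) }\right] $ when $z$ is real; for general complex $z$ I would instead write it as $\frac{1}{2i}\left[ e^{i\left( z\sin t-\nu t\right) }-e^{-i\left( z\sin t-\nu t\right) }\right] $. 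For the infinite integral, since $\nu \in \mathbb{R}$ we have $e^{-\nu t}\cos \nu \pi =\Re \left[ e^{\nu \left( i\pi -t\right) }\right] $, while the term $e^{\nu t}$ is already exponential in $\nu$.

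Next I would differentiate $n$ times under the integral sign. This gives
\[
\frac{\partial ^{n}}{\partial \nu ^{n}}\sin \left( z\sin t-\nu t\right) =t^{n}\sin \left( z\sin t-\nu t-\frac{n\pi }{2}\right) .
\]
This follows either from the identity $\frac{d}{d\nu }\sin \left( a-\nu t\right) =t\sin \left( a-\nu t-\frac{\pi }{2}\right) $, iterated $n$ times, or from $\left( \mp it\right) ^{n}=t^{n}e^{\mp i\pi n/2}$ in the exponential form. In the infinite integral, $\partial _{\nu }^{n}e^{\nu t}=t^{n}e^{\nu t}$, and
\[
\partial _{\nu }^{n}\Re \left[ e^{\nu \left( i\pi -t\right) }\right] =\Re \left[ \left( i\pi -t\right) ^{n}e^{i\pi \nu }\right] e^{-\nu t}.
\]
Interchanging $\Re$ with $\partial _{\nu }$ is legitimate because $\nu$ is real. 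Collecting these three pieces with the prefactor $1/\pi$ yields Eq. (\ref{DnYnuRez>0}).

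The main obstacle is justifying the interchange of $\partial _{\nu }^{n}$ and the improper integral over $\left( 0,\infty \right) $. The finite integral is harmless because its integrand is smooth on a compact interval. For the infinite part, I would fix a compact interval $\nu \in \left[ \nu _{1},\nu _{2}\right] $ and bound the $n$-th $\nu$-derivative of the integrand by
\[
C\left( t^{n}+\left( \pi ^{2}+t^{2}\right) ^{n/2}\right) e^{\max \left( \left\vert \nu _{1}\right\vert ,\left\vert \nu _{2}\right\vert \right) t}\,e^{-\Re \left( z\right) \sinh t}.
\]
This majorant is integrable whenever $\Re \left( z\right) >0$, since $\sinh t$ grows like $e^{t}/2$ and so dominates any exponential $e^{ct}$. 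Dominated convergence, or the standard theorem on differentiating parameter integrals, then permits differentiation under the integral sign for all orders $n$, which completes the argument.
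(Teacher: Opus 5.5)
Your proposal is correct and follows the paper's proof: rewrite Eq.~(\ref{Ynu_integral}) so that $\nu$ enters only through exponentials, differentiate $n$ times under the integral sign, and use $(-it)^{n}=t^{n}e^{-i\pi n/2}$. Two of your steps are details the paper leaves implicit. Your form $\frac{1}{2i}\left[e^{iw}-e^{-iw}\right]$ is actually needed for complex $z$, since the paper's $\Im\left[e^{i(z\sin t-\nu t)}\right]$ is only literally valid for real $z$. Your dominated-convergence majorant justifies the interchange on $(0,\infty)$.
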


\begin{proof}
Rewrite the integral representation given in Eq. (\ref{Ynu_integral})\ as
\begin{eqnarray*}
Y_{\nu }\left( z\right) &=&\frac{1}{\pi }\int_{0}^{\pi }\Im \left[e^{i\left( z\sin t-\nu t\right) }\right] \,dt \\
&&-\frac{1}{\pi }\int_{0}^{\infty }\left\{ e^{\nu t}+\Re \left[ e^{\nu \left( i\pi -t\right) }\right] \right\} \,e^{-z\sinh t}\,dt,
\end{eqnarray*}
and take the $n$-th derivative with respect to the order to obtain
\begin{eqnarray*}
\frac{\partial ^{n}Y_{\nu }\left( z\right) }{\partial \nu ^{n}} &=&\frac{1}{\pi }\int_{0}^{\pi }\Im \left[ \left( -it\right) ^{n}e^{i\left( z\sin t-\nu t\right) }\right] \,dt \\
&&-\frac{1}{\pi }\int_{0}^{\infty }\left\{ t^{n}e^{\nu t}+\Re \left[ \left( i\pi -t\right) ^{n}e^{i\pi \nu }\right] e^{-\nu t}\right\} \,e^{-z\sinh t}\,dt.
\end{eqnarray*}
The proof is completed by taking into account that $\left( -it\right) ^{n}=t^{n}\exp \left( -i\pi\frac{n}{2}\right) $.
\end{proof}

\begin{theorem}
For $\Re \left( z\right) <0$, the following formula for the $n$-th\ order
derivative holds:
\begin{eqnarray}
\frac{\partial ^{n}Y_{\nu }\left( z\right) }{\partial \nu ^{n}} &=&\frac{1}{\pi }\left\{ i\int_{0}^{\pi }\left[ P_{a}\left( \nu ,z,n,t\right) +P_{b}\left( \nu ,z,n,t\right) \right] \,dt\right.  \label{DnYnuRez<0} \\
&&-\left. \int_{0}^{\infty }\left[ Q_{a}\left( \nu ,z,n,t\right)+Q_{b}\left( \nu ,z,n,t\right) \right] \,e^{z\sinh t}dt\right\} ,  \nonumber
\end{eqnarray}
where we have defined
\begin{eqnarray*}
P_{a}\left( \nu ,z,n,t\right) &:=&\frac{1}{2}\,\left[ r_{+}^{n}\left( -z,t\right) \,e^{\nu \,r_{+}\left( -z,t\right) +i\,z\sin t}-r_{-}^{n}\left( -z,t\right) \,e^{\nu \,r_{-}\left( -z,t\right) -i\,z\sin t}\right] , \\
P_{b}\left( \nu ,z,n,t\right) &:=&\frac{\mathrm{a}\left( z\right) }{\pi }\left[ \left( t-\pi \right) ^{n}\cos \left( \nu \left( t-\pi \right) +z\sin t+n\frac{\pi }{2}\right) \right. \\
&&+\left. \left( t+\pi \right) ^{n}\cos \left( \nu \left( t+\pi \right) +z\sin t+n\frac{\pi }{2}\right) \right] , \\
Q_{a}\left( \nu ,z,n,t\right) &:=&\frac{i\,\mathrm{a}\left( z\right) }{\pi }\,\Im \left[ \left( 2\pi i-t\right) ^{n}\,e^{\left( 2\pi i-t\right) \nu }\right] , \\
Q_{b}\left( \nu ,z,n,t\right) &:=&\left( t-i\,\mathrm{a}\left( z\right)\right) ^{n}e^{\nu \,\left( t-i\,\mathrm{a}\left( z\right) \right) } \\
&&+\frac{1}{2}\,\left[ s_{+}^{n}\left( -z,t\right) \,e^{\nu \,s_{+}\left( -z,t\right) }+s_{-}^{n}\left( -z,t\right) \,e^{\nu \,s_{-}\left( -z,t\right)}\right] .
\end{eqnarray*}
\end{theorem}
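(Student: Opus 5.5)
The plan is to reduce the case $\Re(z)<0$ to the case $\Re(-z)>0$ with a reflection formula for $Y_\nu$, in the same way Eq. (\ref{Reflection_Jnu}) was used for $J_\nu$. Then we insert Schl\"afli's representations (\ref{Jnu_integral})--(\ref{Ynu_integral}) evaluated at $-z$, and finally differentiate $n$ times under the integral sign.

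\emph{Step 1 (reflection).} For $\Re(z)<0$ we have $\mathrm{a}(z)=\pm\pi$. By Eq. (\ref{Reflection_Jnu}), $J_{\nu}(z)=e^{i\nu\mathrm{a}(z)}J_\nu(-z)$ and $J_{-\nu}(z)=e^{-i\nu\mathrm{a}(z)}J_{-\nu}(-z)$. Substitute these into the definition (\ref{Ynu_def}), and eliminate $J_{-\nu}(-z)$ using $J_{-\nu}(-z)=J_\nu(-z)\cos\nu\pi-\sin\nu\pi\,Y_\nu(-z)$. This gives
\[
Y_\nu(z)=e^{-i\nu\mathrm{a}(z)}Y_\nu(-z)+2i\,\sin\left(\nu\,\mathrm{a}(z)\right)\cot(\nu\pi)\,J_\nu(-z).
\]
Since $\mathrm{a}(z)=\pm\pi$, we have $\sin(\nu\mathrm{a}(z))\cot\nu\pi=\frac{\mathrm{a}(z)}{\pi}\cos\nu\pi$. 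Hence
\[
Y_\nu(z)=e^{-i\nu\mathrm{a}(z)}Y_\nu(-z)+\frac{2i\,\mathrm{a}(z)}{\pi}\cos(\nu\pi)\,J_\nu(-z).
\]
Both sides are entire in $\nu$, so the identity also holds for integer $\nu$ by continuity.

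\emph{Step 2 (insert integrals).} Apply Eq. (\ref{Ynu_integral}) and Eq. (\ref{Jnu_integral}) with argument $-z$; this is legitimate because $\Re(-z)>0$. The exponent becomes $e^{z\sinh t}$, which decays as $t\to\infty$. Write every trigonometric function as a sum of exponentials in $\nu$, and note that $\mathrm{a}(-z)=-\mathrm{a}(z)$.
\begin{itemize}
\item In the first term, the finite integral $e^{-i\nu\mathrm{a}(z)}\int_0^\pi\sin(-z\sin t-\nu t)\,dt$ recombines into exponentials $e^{\nu r_\pm(-z,t)\pm iz\sin t}$. This yields $P_a$.
\item The semi-infinite part of the first term, with weights $e^{\nu t}+e^{-\nu t}\cos\nu\pi$, produces $(t-i\mathrm{a}(z))$-type exponents and $s_\pm(-z,t)$ exponents. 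This yields $Q_b$.
\item In the second term, multiplying $\cos\nu\pi$ by the finite Schl\"afli integral for $J_\nu(-z)$ gives products $\cos(\nu(t\mp\pi)+z\sin t)$. These give $P_b$ before differentiation.
\item Multiplying $\cos\nu\pi$ by $\sin\nu\pi\,e^{-\nu t}$ gives $\frac12\sin(2\nu\pi)e^{-\nu t}=\frac12\Im[e^{(2\pi i-t)\nu}]$. This is $Q_a$ before differentiation.
\end{itemize}
At this point the integrand for $Y_\nu(z)$ is a sum of terms of the form $c\,e^{\nu w(t)}g(z,t)$.

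\emph{Step 3 (differentiate).} Each $\partial_\nu^n$ acting on $e^{\nu w}$ brings down $w^n$. For the cosine terms, $\partial_\nu^n\cos(\nu u+\phi)=u^n\cos(\nu u+\phi+n\pi/2)$, which produces the shifts $n\pi/2$ that appear in $P_b$. Interchanging derivative and integral is justified as follows:
\begin{itemize}
\item on $[0,\pi]$ the integrands are smooth in $(\nu,t)$;
\item on $[0,\infty)$, for $\nu$ in a compact set, the integrands are dominated by $C(1+t)^n e^{(|\nu|+2\pi)t}e^{\Re(z)\sinh t}$, which is integrable because $\Re(z)<0$.
\end{itemize}

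The main obstacle is the bookkeeping in Step 2. One has to regroup the many exponential pieces so that they match exactly the stated $P_a,P_b,Q_a,Q_b$, including the factors $\frac12$, $i$ and $\mathrm{a}(z)/\pi$, and track signs through $\mathrm{a}(-z)=-\mathrm{a}(z)$. I would do this by expanding everything into pure exponentials first and then collecting terms by the exponent $w(t)$.
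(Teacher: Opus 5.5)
Your proposal is correct and follows essentially the paper's route: the reflection identity $Y_\nu(z)=e^{-i\nu\mathrm{a}(z)}Y_\nu(-z)+\frac{2i}{\pi}\mathrm{a}(z)\cos\nu\pi\,J_\nu(-z)$, then Schl\"afli's integrals at $-z$ regrouped into exactly $P_a,P_b,Q_a,Q_b$, then termwise differentiation. The only differences are small improvements: you derive that identity directly from the definition of $Y_\nu$ and (\ref{Reflection_Jnu}), extending it to integer $\nu$ by continuity, where the paper cites it; and you supply the domination argument for differentiating under the integral sign, which the paper omits.
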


\begin{proof}
For $\nu \notin \mathbb{Z}$ we have \cite{ReflectionxJL}:
\begin{equation}
J_{\nu }\left( z\right) =\frac{i\pi }{2\,\mathrm{a}\left( z\right) \cos \nu
\pi }\,\left[ Y_{\nu }\left( -z\right) -e^{i\nu \,\mathrm{a}\left( z\right)
}\,Y_{\nu }\left( z\right) \right] ,  \label{Reflection_Ynu_Jnu}
\end{equation}%
where recall that $\mathrm{a}\left( z\right) $ is given by Eq. (\ref{a(z)_def}).
Taking into account Eq. (\ref{Reflection_Jnu}), rewrite Eq. (\ref{Reflection_Ynu_Jnu})\ as
\begin{equation}
Y_{\nu }\left( z\right) =e^{-i\nu \,\mathrm{a}\left( z\right) }\,Y_{\nu
}\left( -z\right) +\frac{2i}{\pi }\,\mathrm{a}\left( z\right) \cos \nu \pi
\,J_{\nu }\left( -z\right) .  \label{Ynu(z)->Ynu(-z)}
\end{equation}
On one hand, from the integral representation of $J_{\nu }\left( z\right) $ (Eq. (\ref{Jnu_integral})), we apply the trigonometric identities
\cite[Eq. 4.21.7\&27]{NIST}:
\begin{eqnarray*}
2\cos u\cos v &=&\cos \left( u-v\right) +\cos \left( u+v\right) , \\
\sin u\cos u &=&\frac{1}{2}\sin \left( 2u\right) =\frac{1}{2}\Im \left[ e^{2iu}\right] ,
\end{eqnarray*}
to obtain
\begin{eqnarray}
&&\cos \nu \pi \,J_{\nu }\left( -z\right)  \label{cos*Jnu} \\
&=&\frac{1}{2\pi }\left\{ \int_{0}^{\pi }\left[ \cos \left( \nu \left( t-\pi
\right) +z\sin t\right) +\cos \left( \nu \left( t+\pi \right) +z\sin
t\right) \right] \,dt\right.  \nonumber \\
&&- \left. \int_{0}^{\infty }\Im \left[ e^{\left( 2\pi i-t\right) \,\nu }\right]
\,e^{z\sinh t}dt.\right\}  \nonumber
\end{eqnarray}
On the other hand, from the integral representation of $Y_{\nu }\left( z\right) $ (Eq. (\ref{Ynu_integral})), and taking into account the definitions given in Eqs. (\ref{r(z,t)_def}) and (\ref{s(z,t)_def}), we obtain
\begin{eqnarray}
&&e^{-i\nu \,\mathrm{a}\left( z\right) }\,Y_{\nu }\left( -z\right)
\label{exp*Ynu} \\
&=&\frac{1}{2\pi }\left\{ i\int_{0}^{\pi }\left[ e^{\nu \,r_{+}\left(
-z,t\right) +i\,z\sin t}-\,e^{\nu \,r_{-}\left( -z,t\right) -i\,z\sin t}\right] \,dt\right.  \nonumber \\
&&-\left. \int_{0}^{\infty }\left[ 2e^{\nu \,\left( t-i\,\mathrm{a}\left(z\right) \right) }+\,e^{\nu \,s_{+}\left( -z,t\right) }+\,e^{\nu\,s_{-}\left( -z,t\right) }\right] \,e^{z\sinh t}dt\right\} .  \nonumber
\end{eqnarray}
Substitute Eqs. (\ref{cos*Jnu})\ and (\ref{exp*Ynu})\ in Eq. (\ref{Ynu(z)->Ynu(-z)}) and take the $n$-th derivative with respect to the order to complete the proof.
\end{proof}

\begin{theorem}
For $x>0$, the following formula for the $n$-th\ order derivative holds:
\begin{eqnarray*}
\frac{\partial ^{n}Y_{\nu }\left( i\,x\right) }{\partial \nu ^{n}} &=&i\,\frac{\partial ^{n}J_{\nu }\left( i\,x\right) }{\partial \nu ^{n}} \\
&&+\frac{\left( -1\right) ^{n+1}}{\pi }\int_{-\infty }^{+\infty }\left( t+i\frac{\pi }{2}\right) ^{n}e^{-\nu \left( t+i\,\pi /2\right) -x\cosh t}\,dt,
\end{eqnarray*}
where $\frac{\partial ^{n}J_{\nu }\left( i\,x\right) }{\partial \nu ^{n}}$
is given by Eq. (\ref{DnJ(ix)}).
\end{theorem}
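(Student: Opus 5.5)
The plan is to express $Y_\nu(ix)$ through $J_\nu(ix)$ and the Hankel function $H^{(1)}_\nu(ix)$, for which the paper already has an integral representation that is valid here. From the definition (\ref{H(1)(z)_def}), $H^{(1)}_\nu = J_\nu + iY_\nu$, so
\[
Y_\nu(z) = -i\,H^{(1)}_\nu(z) + i\,J_\nu(z).
\]
Put $z = ix$ with $x>0$. The second term is handled by the earlier theorem giving Eq. (\ref{DnJ(ix)}), after differentiating $n$ times. So the only new work concerns $-i\,H^{(1)}_\nu(ix)$.

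Since $\Im(ix) = x > 0$, the representation (\ref{H(1)_integral}) applies directly. Using $i z\cosh t = -x\cosh t$, it gives
\[
-i\,H^{(1)}_\nu(ix) = -\frac{e^{-i\pi\nu/2}}{\pi}\int_{-\infty}^{+\infty} e^{-x\cosh t-\nu t}\,dt
= -\frac{1}{\pi}\int_{-\infty}^{+\infty} e^{-\nu\left(t+i\pi/2\right)-x\cosh t}\,dt .
\]
This step is the key one. The exponent is linear in $\nu$ with coefficient $-(t+i\pi/2)$.

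Next, differentiate $n$ times with respect to $\nu$ under the integral sign. Each derivative brings down a factor $-(t+i\pi/2)$, so the $n$-th derivative has integrand $(-1)^n (t+i\pi/2)^n e^{-\nu(t+i\pi/2)-x\cosh t}$. Together with the prefactor $-1/\pi$ this produces exactly the stated coefficient $(-1)^{n+1}/\pi$. Adding $i\,\partial^n J_\nu(ix)/\partial\nu^n$ then completes the formula.

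The only point needing care is the justification for differentiating under the integral sign. On any compact $\nu$-interval the integrand and all its $\nu$-derivatives are dominated by $C\,(|t|+\pi)^n e^{|\nu||t|-x\cosh t}$. This bound is integrable because $x>0$, since $\cosh t$ grows like $e^{|t|}/2$ and dominates any linear exponent. Dominated convergence therefore applies. The same argument covers the integrals in Eq. (\ref{DnJ(ix)}), which were already differentiated in the earlier theorem. I do not expect a genuine obstacle. The main thing to watch is the sign and phase bookkeeping, namely $-i\cdot\frac{1}{i\pi} = -\frac{1}{\pi}$, and merging $e^{-i\pi\nu/2}$ into the exponent.
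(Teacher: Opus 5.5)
Your proposal is correct and follows the same route as the paper. Both write $Y_\nu(z)=i\,[J_\nu(z)-H^{(1)}_\nu(z)]$, apply the integral representation of $H^{(1)}_\nu$ for $\Im z>0$ at $z=ix$, and differentiate $n$ times. Your phase and sign bookkeeping is right. The dominated-convergence justification is a detail the paper omits, provided $|\nu|$ in your bound is read as the maximum of $|\nu|$ over the compact $\nu$-interval.
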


\begin{proof}
From the definition of $H_{\nu }^{\left( 1\right) }\left( z\right)$ (Eq. (\ref{H(1)(z)_def})) and the integral representation given in Eq. (\ref{H(1)_integral}), we have for $\Im \left( z\right) >0$
\begin{eqnarray*}
Y_{\nu }\left( z\right) &=&i\,\left[ J_{\nu }\left( z\right) -H_{\nu}^{\left( 1\right) }\left( z\right) \right] \\
&=&i\,J_{\nu }\left( z\right) -\frac{1}{\pi }\int_{-\infty }^{+\infty}e^{-\nu \left( t+i\,\pi /2\right) +i\,z\cosh t}\,dt.
\end{eqnarray*}
Let $z=i\,x$ and take the the $n$-th derivative with respect to the order to complete the proof.
\end{proof}

\begin{corollary}
For $x>0$, the following formula for the $n$-th\ order derivative holds:
\[
\frac{\partial ^{n}Y_{\nu }\left( -i\,x\right) }{\partial \nu ^{n}}=\overline{\frac{\partial ^{n}Y_{\nu }\left( i\,x\right) }{\partial \nu ^{n}}.%
}
\]
\end{corollary}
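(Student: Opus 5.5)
The plan mirrors the proof of the analogous corollary for $J_\nu$: first establish a conjugation symmetry for $Y_\nu$ itself at the point $i\,x$, then differentiate it $n$ times in $\nu$. The needed symmetry is
\[
Y_{\nu }\left( \bar{z}\right) =\overline{Y_{\nu }\left( z\right) },\quad z\notin \left( -\infty ,0\right] ,\ \nu \in \mathbb{R}.
\]
This is a standard property of $Y_\nu$ (NIST, Eq. 10.11.9, together with Eq. (\ref{Jnu(conj(z))})). It can also be derived directly from the definition (\ref{Ynu_def}) when $\nu \notin \mathbb{Z}$. Since $\nu$ is real, $\cos \nu\pi$ and $\sin\nu\pi$ are real. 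Applying Eq. (\ref{Jnu(conj(z))}) to both $J_{\nu}$ and $J_{-\nu}$ then gives the identity immediately.

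For integer $\nu=m$, I pass to the limit $\nu\to m$. Complex conjugation is continuous, so the identity survives the limit. Because $x>0$, the point $i\,x$ lies off the cut $(-\infty,0]$, and $\overline{i\,x}=-i\,x$. Hence
\[
Y_{\nu }\left( -i\,x\right) =Y_{\nu }\left( \overline{i\,x}\right) =\overline{Y_{\nu }\left( i\,x\right) }
\]
for all real $\nu$.

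The last step is to differentiate $n$ times with respect to $\nu$. This requires that $\partial_\nu$ commutes with complex conjugation when $\nu$ varies over $\mathbb{R}$. Write $Y_\nu(i\,x)=u(\nu)+i\,v(\nu)$ with $u,v$ real-valued and smooth in $\nu$; smoothness holds because $Y_\nu(z)$ is entire in $\nu$ for fixed $z$ off the cut. Then $\partial_\nu^n \overline{Y_\nu} = u^{(n)}-i\,v^{(n)} = \overline{\partial_\nu^n Y_\nu}$, which gives the stated formula.

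The only point needing care is the integer-order case of the conjugation symmetry, i.e.\ justifying that smoothness in $\nu$ lets the limit and the derivatives pass through conjugation. Alternatively, one can simply quote the symmetry for all $\nu$ from the literature, as the paper does for $J_\nu$, which avoids the issue entirely.
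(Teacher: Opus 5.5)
Your proposal is correct and follows the paper's proof. The paper likewise quotes $Y_\nu(\bar z)=\overline{Y_\nu(z)}$ for $z\notin(-\infty,0]$ (NIST Eq.~10.11.9) to get $Y_\nu(-i\,x)=\overline{Y_\nu(i\,x)}$, and then differentiates $n$ times in $\nu$. Your additional steps (deriving the symmetry from the definition for $\nu\notin\mathbb{Z}$, passing to the limit at integer orders, and noting that $\partial_\nu$ commutes with conjugation for real $\nu$) make explicit what the paper leaves implicit.
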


\begin{proof}
Applying the property for $z \notin \left(-\infty, 0 \right] $ \cite[Eq. 10.11.9]{NIST}:
\begin{equation}
Y_{\nu }\left( \bar{z}\right) =\overline{Y_{\nu }\left( z\right) },
\label{Ynu(conj(z))}
\end{equation}
we obtain for $x>0$,
\[
Y_{\nu }\left( -i\,x\right) =Y_{\nu }\left( \overline{i\,x}\right) =%
\overline{Y_{\nu }\left( i\,x\right) }.
\]%
Finally, take the $n$-th derivative with respect to the order to complete the proof.
\end{proof}

\begin{theorem}
\label{Theorem: DnYnu(0)} For $z=0$, the $n$-th\ order derivative of the Bessel function of the second kind is divergent.
\end{theorem}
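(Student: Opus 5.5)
We mirror the proof of Theorem \ref{Theorem: DnBesselJ_z=0}: obtain the leading small-$z$ behaviour of $Y_{\nu}(z)$ for each real $\nu$, differentiate that leading term $n$ times in $\nu$, and show its modulus blows up as $z\to 0$. The difference from the $J_\nu$ case is that $Y_\nu$ is singular at the origin for every real order, so no case should give a finite limit. We split into $\nu\notin\mathbb{Z}$ and $\nu=m\in\mathbb{Z}$.

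\emph{Case $\nu\notin\mathbb{Z}$.} Use definition (\ref{Ynu_def}) together with (\ref{J_nu(z)_z->0}).
If $\nu>0$, the term $-J_{-\nu}(z)/\sin\nu\pi$ dominates, giving
\[
Y_\nu(z)\approx -\frac{\Gamma(\nu)}{\pi}\left(\frac z2\right)^{-\nu},
\]
using the reflection formula for $\Gamma$.
If $\nu<0$, the term $J_\nu(z)\cot\nu\pi$ dominates instead, and it behaves like $(z/2)^{\nu}$ with a negative exponent.
In both cases the leading term has the form $c(\nu)(z/2)^{-|\nu|}$ with $c(\nu)\neq 0$ analytic near $\nu$. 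Differentiating $n$ times in $\nu$ with Leibniz's formula, the dominant contribution is $c(\nu)(\mp\log(z/2))^n(z/2)^{-|\nu|}$. Its modulus tends to $\infty$, since negative powers of $z$ and powers of $\log$ both diverge. The $\nu$-derivatives of the neglected terms are of strictly smaller order, because they carry larger powers of $z$.

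\emph{Case $\nu=m\in\mathbb{Z}$.} Use the reflection $Y_{-m}=(-1)^mY_m$, and more precisely its $\nu$-derivative analogue, by expanding $Y_{\nu}$ near $\nu=\pm m$ in $\epsilon=\nu\mp m$ exactly as was done in (\ref{DJ1})--(\ref{DJ3}).

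For $m\ge1$ we keep the leading piece $-\Gamma(\nu)(z/2)^{-\nu}/\pi$ of $Y_\nu$, which is analytic in $\nu$ near $m$. Its $\nu$-derivatives behave like $(z/2)^{-m}\log^n(z/2)$ and hence diverge. The same holds for $-m$ after absorbing the cosine factor.

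For $m=0$ we use $\nu=\epsilon$ and write
\[
Y_\epsilon=\frac{J_\epsilon\cos\epsilon\pi-J_{-\epsilon}}{\sin\epsilon\pi}.
\]
Substituting the small-$z$ approximations $(z/2)^{\pm\epsilon}/\Gamma(1\pm\epsilon)$ gives an expression analytic in $\epsilon$ whose $\epsilon^k$ Taylor coefficients are polynomials in $\log(z/2)$ of degree $k+1$. For example, $Y_0\approx\frac{2}{\pi}\log(z/2)$. Hence the $n$-th derivative behaves like a nonzero multiple of $\log^{n+1}(z/2)$ and diverges.

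The main obstacle is justifying that differentiation in $\nu$ commutes with the asymptotic approximation, and that the subleading terms stay subdominant after differentiation. This is where the $\nu=0$ case is most delicate, since one must check that the leading coefficient of $\log^{n+1}(z/2)$ does not cancel. We would handle this with the full series (\ref{Jnu_def}), which converges uniformly in $\nu$ on compacts. Each term is $(z/2)^{\nu+2k}$ times an entire function of $\nu$, so term-by-term differentiation is legitimate, and the $k=0$ term dominates. For $\nu=0$ we would verify the leading coefficient explicitly by expanding $(z/2)^{\epsilon}-(z/2)^{-\epsilon}$ over $\sin\epsilon\pi$ to order $\epsilon^n$. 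Its top log term comes from
\[
\frac{2\sinh(\epsilon L)}{\pi\epsilon},\qquad L=\log(z/2),
\]
whose $n$-th derivative at $\epsilon=0$ equals $\frac{2L^{n+1}}{\pi(n+1)}$ for even $n$ and vanishes for odd $n$. For odd $n$ we therefore need the next term, which comes from the $\cos\epsilon\pi$ and $\Gamma$ corrections. These still yield a nonzero power of $L$ times a constant such as $\gamma$; checking this is the step we expect to need the most care.
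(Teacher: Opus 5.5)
Your route is the paper's own: take the leading small-$z$ term of $Y_\nu$ and differentiate it in $\nu$. You carry it out more carefully, and it works for $\nu=\pm m$ with $m\ge 1$, for non-integer $\nu>0$, and for even $n$ at $\nu=0$, where $\frac{2}{\pi(n+1)}\log^{n+1}(z/2)$ is indeed the leading term.

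\textbf{The gap at $\nu=0$, odd $n$.} This is the step you flagged, and it cannot be closed, because the correction you hope for also cancels. In your notation put $A(\epsilon)=\cos(\epsilon\pi)/\Gamma(1+\epsilon)=1+\gamma\epsilon+O(\epsilon^2)$ and $B(\epsilon)=1/\Gamma(1-\epsilon)=1-\gamma\epsilon+O(\epsilon^2)$. For odd $n$, the coefficient of $\epsilon^{n}L^{n}$ in $\bigl(A(\epsilon)e^{\epsilon L}-B(\epsilon)e^{-\epsilon L}\bigr)/\sin(\epsilon\pi)$ is $(\gamma-\gamma)/(\pi\,n!)=0$. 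So no $\gamma L^n$ term survives.

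What does survive is easiest to see from the exact identity $Y_{-\nu}=\sin(\nu\pi)J_\nu+\cos(\nu\pi)Y_\nu$. For odd $n$ it gives
\[
\left.\frac{\partial^{n}Y_{\nu}(z)}{\partial\nu^{n}}\right|_{\nu=0}=\frac{1}{2}\,\frac{\partial^{n}}{\partial\nu^{n}}\Bigl[(1-\cos\nu\pi)\,Y_{\nu}(z)-\sin(\nu\pi)\,J_{\nu}(z)\Bigr]_{\nu=0}\approx-\frac{n\pi}{2}\log^{n-1}\left(\frac{z}{2}\right),\qquad z\to 0 .
\]
Here the $(1-\cos\nu\pi)Y_\nu$ part involves only lower odd-order derivatives of $Y_\nu$. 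The $\sin$ part contributes, to leading order, $-n\pi\,\partial_\nu^{n-1}J_\nu|_{\nu=0}\approx-n\pi\log^{n-1}(z/2)$.

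For $n=1$ this is the exact classical formula $\partial Y_\nu(z)/\partial\nu|_{\nu=0}=-\frac{\pi}{2}J_0(z)$ (see \cite[Sect.~10.15]{NIST}), which tends to $-\pi/2$ as $z\to 0$. So the statement is false as written at $\nu=0$, $n=1$, and no completion of your argument can prove it. For odd $n\ge 3$ the derivative does diverge, but only like $\log^{n-1}(z/2)$.

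The paper's proof misses this because it ``differentiates'' the asymptotic $\frac{2}{\pi}[\log(z/2)+\gamma]$. That formula holds only at the single order $\nu=0$ and carries no information about $\nu$-derivatives. Your $\epsilon$-expansion is the right tool, but pushed through it refutes the $n=1$ case rather than proving it. Relatedly, your earlier claim that the $\epsilon^k$ coefficients have degree $k+1$ in $L$ is true only for even $k$.

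\textbf{Smaller error at negative half-integers.} For non-integer $\nu<0$, your leading coefficient $c(\nu)=\cot(\nu\pi)/\Gamma(1+\nu)$ vanishes at $\nu=-\frac12,-\frac32,\ldots$, so the assertion $c(\nu)\neq 0$ is false there.
\begin{itemize}
\item For $n\ge 1$, divergence survives through the next Leibniz term $n\,c'(\nu)\log^{n-1}(z/2)\,(z/2)^{\nu}$, with $c'(\nu)=-\pi/\Gamma(1+\nu)\neq 0$ at those points.
\item For $n=0$ one has $Y_{-1/2}(z)=J_{1/2}(z)\to 0$, so if $n=0$ is admitted the statement fails there too.
\end{itemize}
The paper's $\nu<0$ asymptotic has the same vanishing factor $\cos\nu\pi$. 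Its $(z/2)^{-\nu}$ should also read $(z/2)^{\nu}$.
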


\begin{proof}
According to \cite[Eqs. 10.7.4-5]{NIST} and \cite[Eq. 54:9:1]{Atlas}, we have the following asymptotic behaviour of the Bessel function of the second kind as $z\rightarrow 0$:
\[
Y_{\nu }\left( z\right) \approx \left\{
\begin{array}{ll}
\displaystyle
-\frac{1}{\pi }\Gamma \left( \nu \right) \left( \frac{z}{2}\right) ^{-\nu },
& \nu >0, \\
\displaystyle
\frac{2}{\pi }\left[ \log \left( \frac{z}{2}\right) +\gamma \right] , & \nu
=0, \\
\displaystyle
-\frac{1}{\pi }\Gamma \left( -\nu \right) \cos \nu \pi \left( \frac{z}{2}\right) ^{-\nu }, & \nu <0.
\end{array}
\right.
\]
It is clear that the $n$-th\ derivative with respect to the order in all the above cases produces divergent terms when $z\rightarrow 0$.
\end{proof}

\subsection{Modified Bessel function}

\begin{theorem}
For $\Re \left( z\right) >0$, the following formula for the $n$-th\ order derivative holds:
\begin{eqnarray}
\frac{\partial ^{n}I_{\nu }\left( z\right) }{\partial \nu ^{n}} &=&\frac{1}{%
\pi }\left\{ \int_{0}^{\pi }t^{n}\cos \left( \nu t+\frac{n\pi }{2}\right)
\,e^{z\cos t}\,dt\right.  \label{DnBesselIRez>0} \\
&&-\left. \int_{0}^{\infty }\Im \left[ \left( i\pi -t\right) ^{n}e^{i\pi \nu
}\right] \,e^{-z\cosh t-\nu t}\,dt\right\} .  \nonumber
\end{eqnarray}
\end{theorem}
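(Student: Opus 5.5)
The plan mirrors the proof of the analogous result for $J_\nu$ in Eq. (\ref{DnJnuRez>0}): start from the integral representation (\ref{I_nu_integral}) of $I_\nu(z)$, valid for $\Re(z)>0$, and write the $\nu$-dependence of each integrand as the real or imaginary part of a single exponential in $\nu$. Differentiating that exponential $n$ times is then trivial.

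Concretely, I would first write
\[
\cos \nu t=\Re\left[e^{i\nu t}\right],\qquad \sin \nu \pi \, e^{-\nu t}=\Im\left[e^{\nu\left(i\pi -t\right)}\right],
\]
both valid because $\nu$ and $t$ are real. This gives
\[
I_{\nu}\left(z\right)=\frac{1}{\pi}\int_{0}^{\pi}\Re\left[e^{i\nu t}\right]e^{z\cos t}\,dt-\frac{1}{\pi}\int_{0}^{\infty}\Im\left[e^{\nu\left(i\pi-t\right)}\right]e^{-z\cosh t}\,dt .
\]
Here $e^{z\cos t}$ and $e^{-z\cosh t}$ do not depend on $\nu$. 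Since $\nu$ is real, $\partial_\nu$ commutes with $\Re$ and $\Im$. Differentiating under the integral sign $n$ times then yields the factors $(it)^n$ and $(i\pi-t)^n$, respectively.

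To finish, I use $(it)^n=t^n e^{i\pi n/2}$. This gives $\Re[(it)^n e^{i\nu t}]=t^n\cos(\nu t+n\pi/2)$. In the second integral I factor $e^{\nu(i\pi-t)}=e^{i\pi\nu}e^{-\nu t}$ and pull the real factor $e^{-\nu t}$ outside $\Im$. The result is exactly Eq. (\ref{DnBesselIRez>0}).

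The only step requiring care is justifying differentiation under the integral sign on the infinite interval. For $\nu$ in a compact set $|\nu|\le N$, the $n$-th $\nu$-derivative of the second integrand is bounded by
\[
(\pi+t)^n e^{Nt}e^{-\Re(z)\cosh t},
\]
which is integrable on $[0,\infty)$ because $\Re(z)>0$ and $\cosh t$ grows faster than any linear exponential. Dominated convergence therefore applies for every $n$. The first integral is over a compact interval with a smooth integrand, so no issue arises there.
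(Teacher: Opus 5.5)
Your proof is correct and follows the paper's route: rewrite the representation (\ref{I_nu_integral}) with $\Re\left[e^{i\nu t}\right]$ and $\Im\left[e^{\nu(i\pi-t)}\right]$, then differentiate $n$ times under the integral sign. Your intermediate formula also has the correct prefactor $1/\pi$ on the second integral, where the paper's rewritten version (\ref{I_nu_integral2}) has a misprinted $1/(2\pi i)$, and your dominated-convergence bound supplies the justification for differentiating under the integral that the paper leaves implicit.
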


\begin{proof}
Rewrite the integral representation given in Eq. (\ref{I_nu_integral})\ as follows
\begin{eqnarray}
I_{\nu }\left( z\right) &=&\frac{1}{\pi }\int_{0}^{\pi }e^{z\cos t}\,\Re \left[ e^{i\nu t}\right] \,dt  \label{I_nu_integral2} \\
&&- \frac{1}{2\pi i}\int_{0}^{\infty }\Im \left[ e^{\nu \left( i\pi -t\right) }\right] \,e^{-z\cosh t}\,dt,  \nonumber
\end{eqnarray}
and take the the $n$-th derivative with respect to the order to complete the proof.
\end{proof}

\begin{theorem}
For $\Re \left( z\right) <0$, the following formula for the $n$-th\ order derivative holds:
\begin{eqnarray*}
&&\frac{\partial ^{n}I_{\nu }\left( z\right) }{\partial \nu ^{n}} \\
&=&\frac{1}{2\pi }\left\{ \int_{0}^{\pi }\left[ r_{+}^{n}\left( z,t\right)\,e^{r_{+}\left( z,t\right) \,\nu }+r_{-}^{n}\left( z,t\right)\,e^{r_{-}\left( z,t\right) \,\nu }\right] \,e^{-z\cos t}\,dt\right. \\
&&+\,i\left. \int_{0}^{\infty }\left[ s_{+}^{n}\left( z,t\right)\,e^{s_{+}\left( z,t\right) \,\nu }-s_{-}^{n}\left( z,t\right)\,e^{s_{-}\left( z,t\right) \,\nu }\right] \,e^{z\cosh t}\,dt\right\} ,
\end{eqnarray*}%
where recall that $r_{\pm }\left( z,t\right) $ and $s_{\pm }\left(z,t\right) $ are defined in Eqs. (\ref{r(z,t)_def})\ and (\ref{s(z,t)_def})\
respectively.
\end{theorem}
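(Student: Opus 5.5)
We follow the same route as for $J_\nu(z)$ with $\Re(z)<0$: reduce to the argument $-z$, where $\Re(-z)>0$, and apply the Schl\"afli-type representation (\ref{I_nu_integral}).

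\textbf{Reflection identity.} From the series definition (\ref{Inu_def}), the sum $\sum_k (z/2)^{2k}/(k!\,\Gamma(\nu+k+1))$ is even in $z$. The only change under $z\mapsto -z$ is in the prefactor $(z/2)^\nu$, so, exactly as in Eq. (\ref{Reflection_Jnu}),
\[
\frac{I_\nu(z)}{I_\nu(-z)}=z^{\nu}(-z)^{-\nu}=e^{i\nu\,\mathrm{a}(z)} .
\]
Hence $I_\nu(z)=e^{i\nu\,\mathrm{a}(z)}I_\nu(-z)$.

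\textbf{Substitution.} Since $\Re(-z)>0$, we insert (\ref{I_nu_integral}) with $z$ replaced by $-z$:
\[
I_\nu(z)=\frac{e^{i\nu\,\mathrm{a}(z)}}{\pi}\int_0^\pi e^{-z\cos t}\cos\nu t\,dt-\frac{\sin\nu\pi}{\pi}\,e^{i\nu\,\mathrm{a}(z)}\int_0^\infty e^{z\cosh t-\nu t}\,dt .
\]
In the first integral we write $\cos\nu t=(e^{i\nu t}+e^{-i\nu t})/2$. Then
\[
e^{i\nu\,\mathrm{a}(z)}e^{\pm i\nu t}=e^{\nu\,r_\pm(z,t)},
\]
which gives $\frac{1}{2\pi}\int_0^\pi\bigl[e^{\nu r_+}+e^{\nu r_-}\bigr]e^{-z\cos t}\,dt$.

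In the second integral we write $\sin\nu\pi=(e^{i\nu\pi}-e^{-i\nu\pi})/(2i)$, so that $-\sin\nu\pi/\pi=\frac{i}{2\pi}(e^{i\nu\pi}-e^{-i\nu\pi})$. Multiplying by $e^{i\nu\,\mathrm{a}(z)}e^{-\nu t}$ gives $e^{\nu s_+(z,t)}-e^{\nu s_-(z,t)}$, by the definition (\ref{s(z,t)_def}). The second term therefore becomes $\frac{i}{2\pi}\int_0^\infty\bigl[e^{\nu s_+}-e^{\nu s_-}\bigr]e^{z\cosh t}\,dt$.

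This is the $n=0$ case of the stated formula. It is essentially the same bookkeeping as in Eq. (\ref{Jnu_intermedio}), with $\sin t$ replaced by $\cos t$ in the finite-range kernel and $\sinh t$ by $\cosh t$ in the infinite-range kernel.

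\textbf{Differentiation.} Finally we differentiate $n$ times in $\nu$ under the integral sign. Each exponential $e^{\nu w}$, where $w$ does not depend on $\nu$, produces the factor $w^n$, which yields the claimed expression.

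\textbf{Justification.} The only step needing care is passing the derivative through the integrals.
\begin{itemize}
\item On $[0,\pi]$ the integrand is smooth in $(\nu,t)$ on a compact set, so differentiation under the integral sign is immediate.
\item On $[0,\infty)$, since $\Re(z)<0$, the factor $|e^{z\cosh t}|=e^{\Re(z)\cosh t}$ decays doubly exponentially. Also $|s_\pm^n e^{\nu s_\pm}|\le C(1+t)^n e^{|\nu| t}$, uniformly for $\nu$ in compact sets. Dominated convergence therefore applies.
\end{itemize}
We also note that $\mathrm{a}(z)$ is a fixed constant, equal to $\pm\pi$ according to the sign of $\Im z$ and the principal-branch convention. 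It does not depend on $\nu$ or $t$, so it causes no difficulty in the differentiation.
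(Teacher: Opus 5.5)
Your proposal is correct and follows essentially the same route as the paper's proof. Both use the reflection $I_\nu(z)=e^{i\nu\,\mathrm{a}(z)}I_\nu(-z)$, then the representation (\ref{I_nu_integral}) at $-z$, then rewrite the exponentials via $r_\pm(z,t)$ and $s_\pm(z,t)$, and finally differentiate under the integral sign. Your version is slightly more careful: you keep the factor $\tfrac12$ from $\cos\nu t=(e^{i\nu t}+e^{-i\nu t})/2$, which the paper's intermediate display drops, and you justify the interchange of derivative and integral, which the paper leaves implicit.
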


\begin{proof}
From the definition of the modified Bessel function in Eq. (\ref{Inu_def}) and
recalling the definition of $\mathrm{a}\left( z\right) $ in Eq. (\ref{a(z)_def}), we have
\begin{equation}
\frac{I_{\nu }\left( z\right) }{I_{\nu }\left( -z\right) }=z^{\nu }\left( -z\right) ^{-\nu }=e^{i\nu \,\mathrm{a}\left( z\right) }.
\label{Reflection_I_nu}
\end{equation}
Thus, according to Eq. (\ref{Reflection_I_nu})\ and the integral representation in Eq. (\ref{I_nu_integral}), we have for $\Re \left( z\right) <0$,
\begin{eqnarray*}
I_{\nu }\left( z\right) &=&\frac{e^{i\nu \,\mathrm{a}\left( z\right) }}{\pi }\int_{0}^{\pi }\left[ e^{i\nu t}+e^{-i\nu t}\right] \,e^{-z\cos t}\,dt \\
&&- \frac{e^{i\nu \,\mathrm{a}\left( z\right) }}{2\pi i}\int_{0}^{\infty }\left[ e^{\nu \left( i\pi -t\right) }-e^{\nu \left( -i\pi -t\right) }\right]\,e^{z\cosh t}\,dt.
\end{eqnarray*}
Now, applying the definitions given in Eqs. (\ref{r(z,t)_def})-(\ref{s(z,t)_def}), we obtain
\begin{eqnarray*}
I_{\nu }\left( z\right) &=&\frac{1}{2\pi }\left\{ \int_{0}^{\pi }\left[e^{\nu \,r_{+}\left( z,t\right) }+e^{\nu \,r_{-}\left( z,t\right) }\right]\,e^{-z\cos t}\,dt\right. \\
&&+\left. i\int_{0}^{\infty }\left[ e^{s_{+}\left( z,t\right) \,\nu}-e^{s_{-}\left( z,t\right) \,\nu }\right] \,e^{z\cosh t}\,dt\right\} .
\end{eqnarray*}
Finally, take the $n$-th derivative with respect to the order to complete the proof.
\end{proof}

\begin{theorem}
For $x>0$, the following formula for the $n$-th\ order derivative holds:
\begin{eqnarray*}
&&\frac{\partial ^{n}I_{\nu }\left( i\,x\right) }{\partial \nu ^{n}} \\
&=&\frac{1}{2\pi }\left\{ \int_{0}^{\pi }\left[ p_{+}^{n}\left( t\right)
\,e^{-ix\sin t+p_{+}\left( t\right) \,\,\nu }+p_{-}^{n}\left( t\right)
\,e^{ix\sin t+p_{-}\left( t\right) \,\nu }\right] \,dt\right. \\
&&+\left. i\int_{0}^{\infty }\left[ q_{+}^{n}\left( t\right)
\,e^{q_{+}\left( t\right) \,\nu }-q_{-}^{n}\left( t\right) \,e^{q_{-}\left(
t\right) \,\nu }\right] \,e^{-x\sinh t}\,dt\right\} .
\end{eqnarray*}
\end{theorem}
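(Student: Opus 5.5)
The plan mirrors the proof for $J_{\nu}(i\,x)$: reduce $I_{\nu}(i\,x)$ to $J_{\nu}(x)$, insert Schl\"{a}fli's integral (\ref{Jnu_integral}) with real positive argument $x$, rewrite everything as exponentials linear in $\nu$, and differentiate under the integral sign.

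First, from the series definitions (\ref{Jnu_def}) and (\ref{Inu_def}), the terms agree up to the factor $(-1)^k$ in $J_\nu$ and the ratio of the powers. This gives
\[
\frac{I_{\nu }\left( i\,z\right) }{J_{\nu }\left( z\right) }=\left( i\,z\right) ^{\nu }z^{-\nu }=e^{i\nu \left[ \arg \left( iz\right) -\arg z\right] },
\]
so for $x>0$ we have $I_{\nu }(i\,x)=e^{i\pi \nu /2}J_{\nu }(x)$. Since $\Re(x)>0$, Eq. (\ref{Jnu_integral}) applies. I would write
\[
\cos (x\sin t-\nu t)=\tfrac{1}{2}\left[ e^{-ix\sin t+i\nu t}+e^{ix\sin t-i\nu t}\right]
\]
and $\sin \nu \pi =(e^{i\nu \pi }-e^{-i\nu \pi })/(2i)$. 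Multiplying by $e^{i\pi \nu /2}$, the first integrand becomes
\[
\tfrac{1}{2}\left[ e^{-ix\sin t+i\nu (\pi /2+t)}+e^{ix\sin t+i\nu (\pi /2-t)}\right] =\tfrac{1}{2}\left[ e^{-ix\sin t+p_{+}(t)\nu }+e^{ix\sin t+p_{-}(t)\nu }\right],
\]
using (\ref{p+-(t)_def}). For the second part, $-\frac{1}{\pi }\sin(\nu \pi)\, e^{i\pi \nu /2}e^{-\nu t}$ equals
\[
\frac{i}{2\pi }\left[ e^{\nu (3i\pi /2-t)}-e^{\nu (-i\pi /2-t)}\right] =\frac{i}{2\pi }\left[ e^{q_{+}(t)\nu }-e^{q_{-}(t)\nu }\right],
\]
by (\ref{q+-(t)_def}). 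Collecting both parts yields the stated formula for $n=0$.

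Next, I would differentiate $n$ times with respect to $\nu$ under both integral signs; each exponential $e^{c\nu}$ simply acquires the factor $c^{n}$, which gives the claimed expression. The only point requiring care, and the main (mild) obstacle, is justifying this interchange. On $[0,\pi]$ the integrand is smooth and the interval is compact, so there is nothing to check. On $[0,\infty)$, for $\nu$ in any compact interval $|\nu|\le N$, the derivatives are dominated by $C(1+t)^{n}e^{Nt-x\sinh t}$. This bound is integrable because $\sinh t$ grows like $e^{t}/2$ for $x>0$, and dominated convergence (or the standard differentiation theorem for parameter integrals) then applies.
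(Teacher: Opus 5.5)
Your proposal is correct and follows essentially the same route as the paper. Like the paper, you reduce to $I_{\nu}(i\,x)=e^{i\pi\nu/2}J_{\nu}(x)$, rewrite Schl\"{a}fli's integral (\ref{Jnu_integral}) as exponentials linear in $\nu$ with exponents $p_{\pm}(t)$ and $q_{\pm}(t)$, and then differentiate under the integral sign; your dominated-convergence bound on $[0,\infty)$ supplies the justification for the interchange, which the paper leaves implicit.
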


\begin{proof}
From the definitions of the Bessel function of the first kind in Eq. (\ref{Jnu_def})\ and the modified Bessel function in Eq. (\ref{Inu_def}), we have
\[
\frac{I_{\nu }\left( i\,z\right) }{J_{\nu }\left( z\right) }=\left(
i\,z\right) ^{\nu }z^{-\nu }=e^{i\nu \left[ \arg \left( iz\right) -\arg z \right] }.
\]
Therefore, for $x>0$
\begin{equation}
I_{\nu }\left( i\,x\right) =e^{i\pi \nu /2}J_{\nu }\left( x\right) .
\label{Inu(ix)}
\end{equation}%
Rewrite the integral representation given in Eq. (\ref{Jnu_integral})\ as
follows:\
\begin{eqnarray*}
&&J_{\nu }\left( x\right) =\frac{1}{2\pi }\int_{0}^{\pi }\left[ e^{i\left(x\sin t-\nu t\right) }+e^{-i\left( x\sin t-\nu t\right) }\right] \,dt \\
&&-\frac{1}{2\pi i}\int_{0}^{\infty }\left[ e^{\nu \left( i\pi -t\right)}-e^{\nu \left( -i\pi -t\right) }\right] \,e^{-x\sinh t}\,dt.
\end{eqnarray*}
Now, taking into account Eq. (\ref{Inu(ix)})\ and the definitions given in Eqs. (\ref{p+-(t)_def})-(\ref{q+-(t)_def}), we obtain
\begin{eqnarray*}
I_{\nu }\left( i\,x\right) &=&\frac{1}{2\pi }\int_{0}^{\pi }\left[e^{-ix\sin t+p_{+}\left( t\right) \,\nu }+e^{ix\sin t+p_{-}\left( t\right)\,\nu }\right] \,dt \\
&-&\frac{1}{2\pi i}\int_{0}^{\infty }\left[ e^{q_{+}\left( t\right) \,\nu}-e^{q_{-}\left( t\right) \,\nu }\right] \,e^{-x\sinh t}\,dt.
\end{eqnarray*}%
Finally, take the $n$-th derivative with respect to the order to complete the proof.
\end{proof}

\begin{corollary}
For $x>0$, the following formula for the $n$-th\ order derivative holds:
\[
\frac{\partial ^{n}I_{\nu }\left( -i\,x\right) }{\partial \nu ^{n}}=\overline{\frac{\partial ^{n}I_{\nu }\left( i\,x\right) }{\partial \nu ^{n}}.}
\]
\end{corollary}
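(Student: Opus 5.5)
The plan is to mirror the two earlier corollaries for $J_\nu$ and $Y_\nu$ and reduce the claim to a conjugation symmetry of $I_\nu$ at fixed real order. First I will establish, for $z\notin(-\infty,0]$ and $\nu\in\mathbb{R}$,
\[
I_{\nu }\left( \bar{z}\right) =\overline{I_{\nu }\left( z\right) }.
\]
This follows from the series definition in Eq. (\ref{Inu_def}). All the coefficients $1/(k!\,\Gamma(\nu+k+1))$ are real because $\nu$ is real. The power series in $(z/2)^{2}$ therefore commutes with conjugation. The remaining factor $(z/2)^{\nu}=\exp(\nu\log(z/2))$ is taken with the principal branch, which satisfies $\log\bar z=\overline{\log z}$ off the negative real axis. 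Hence $(\bar z/2)^{\nu}=\overline{(z/2)^{\nu}}$. Alternatively, I could cite the analogue of \cite[Eq. 10.11.9]{NIST} for modified Bessel functions \cite[Eq. 10.34.7]{NIST}.

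Next I will apply this with $z=i\,x$, $x>0$. Since $i\,x\notin(-\infty,0]$, this gives
\[
I_{\nu }\left( -i\,x\right) =I_{\nu }\left( \overline{i\,x}\right) =\overline{I_{\nu }\left( i\,x\right) }
\]
for every real $\nu$.

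Finally, I will differentiate $n$ times with respect to $\nu$. The only point needing care is that conjugation commutes with $\partial/\partial\nu$. This holds because $\nu$ is a real variable: writing $I_\nu(ix)=u(\nu)+i\,v(\nu)$ with $u,v$ real and smooth in $\nu$, the conjugate is $u-iv$, whose $n$-th derivative is $u^{(n)}-i\,v^{(n)}$, the conjugate of $u^{(n)}+i\,v^{(n)}$. Smoothness in $\nu$ follows from $I_\nu(z)$ being entire in $\nu$ for fixed $z\neq0$ on the principal branch.

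I expect no real obstacle. The one subtle step is the branch bookkeeping for $(z/2)^\nu$, which is exactly why the condition $z\notin(-\infty,0]$ is needed and why the argument is restricted to $x>0$ on the imaginary axis. As a consistency check, the same result can be read off the preceding theorem directly, using $I_\nu(-ix)=e^{-i\pi\nu/2}J_\nu(x)$ together with the fact that $J_\nu(x)$ is real.
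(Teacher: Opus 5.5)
Your proposal is correct and follows the same route as the paper: use the conjugation symmetry $I_\nu(\bar z)=\overline{I_\nu(z)}$ for $z\notin(-\infty,0]$ (the paper simply cites DLMF Eq. 10.34.7), specialize to $z=ix$, and differentiate $n$ times in $\nu$. Your extra justifications fill in details the paper leaves implicit: the series argument for the symmetry, and the observation that conjugation commutes with $\partial/\partial\nu$ because $\nu$ is real.
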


\begin{proof}
For $z \notin \left(-\infty, 0 \right] $ \cite[Eq. 10.34.7]{NIST}, we have
\begin{equation}
I_{\nu }\left( \bar{z}\right) =\overline{I_{\nu }\left( z\right) }, \label{BesselI_conjugate}
\end{equation}
and therefore for $x>0$,
\[
I_{\nu }\left( -i\,x\right) =I_{\nu }\left( \overline{i\,x}\right) =\overline{I_{\nu }\left( i\,x\right) }.
\]
Finally, take the $n$-th derivative with respect to the order to complete the proof.
\end{proof}

\begin{theorem}
For $z=0$, the following cases for the $n$-th order derivative hold:
\begin{equation}
\frac{\partial ^{n}I_{\nu }\left( 0\right) }{\partial \nu ^{n}}=\left\{
\begin{array}{cc}
0, & \nu >0, \\
\mathrm{Divergent,} & \nu \leq 0.
\end{array}
\right.  \label{Dn_Inu_z->0}
\end{equation}
\end{theorem}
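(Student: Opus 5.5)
The plan is to mirror the proof of Theorem \ref{Theorem: DnBesselJ_z=0}, since $I_\nu$ and $J_\nu$ have the same leading small-$z$ behaviour. From the series definition in Eq. (\ref{Inu_def}), the $k=0$ term dominates as $z\rightarrow 0$, so that
\[
I_{\nu }\left( z\right) \approx \frac{\left( z/2\right) ^{\nu }}{\Gamma \left( 1+\nu \right) },\quad z\rightarrow 0,\ \nu \neq -1,-2,\ldots
\]
This is exactly the analogue of Eq. (\ref{J_nu(z)_z->0}). Every later step then transfers verbatim from the $J_\nu$ case, because the alternating sign $(-1)^k$ plays no role in the leading term.

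For $\nu\neq-1,-2,\ldots$, I will differentiate $n$ times in $\nu$ with Leibniz's rule. The factor $(z/2)^\nu$ produces powers $\log^{j}(z/2)$, and the factor $1/\Gamma(1+\nu)$ produces its derivatives, which are finite at such $\nu$. The dominant term as $z\to0$ is therefore $\log^n(z/2)\,(z/2)^\nu/\Gamma(1+\nu)$. The subleading terms of the series carry extra factors $z^{2k}$ and only improve matters. I would remark that the differentiated series still converges uniformly near $z=0$, which justifies differentiating term by term.

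The case split then follows from this dominant term:
\begin{itemize}
\item For $\nu>0$, $(z/2)^\nu\log^{j}(z/2)\to0$ for all $j$, so the derivative vanishes at $z=0$.
\item For $\nu<0$, not a negative integer, $|(z/2)^\nu|\to\infty$, which gives divergence.
\item For $\nu=0$, $1/\Gamma(1)=1$, so for $n\ge1$ the term $\log^n(z/2)$ diverges. For $n=0$ one has $I_0(0)=1$; this case is the zeroth derivative and I would note it as the trivial exception, just as the $J$ theorem implicitly does.
\end{itemize}

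For $\nu=-m$ with $m=1,2,\ldots$, I will set $\nu=-m+\epsilon$ as in Eqs. (\ref{DJ1})--(\ref{DJ3}). Using $1/\Gamma(1-m+\epsilon)\approx(-1)^{m-1}(m-1)!\,\epsilon$, the leading contribution is $(z/2)^{-m}(-1)^{m-1}(m-1)!\,n\log^{n-1}(z/2)$, which diverges as $z\to0$.

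The main obstacle is this integer case. Here the naive leading term vanishes, because $1/\Gamma$ has a zero at $1+\nu=1-m$. It is also not a priori obvious that other series terms with $k<m$, which also have vanishing Gamma factors, do not cancel the divergence. I would handle this by noting that the term $k$ contributes at order $(z/2)^{-m+2k}$. The $k=0$ term, of order $(z/2)^{-m}\log^{n-1}(z/2)$, is therefore strictly the most singular, and no cancellation can occur.
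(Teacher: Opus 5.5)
Your proposal is correct and follows the same route as the paper, which likewise quotes $I_\nu(z)\approx (z/2)^\nu/\Gamma(1+\nu)$ as $z\to0$ and then repeats the steps of Theorem \ref{Theorem: DnBesselJ_z=0}, including the substitution $\nu=-m+\epsilon$. Your extra care is well placed: working from the series justifies differentiating in $\nu$, rules out cancellation from the terms $k<m$, and keeps the factor $n$ in $n\log^{n-1}(z/2)$ that the paper's Eq. (\ref{DJ3}) drops; that factor also shows your $n=0$ exception extends to $\nu=-m$, where $I_{-m}(0)=I_m(0)=0$.
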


\begin{proof}
According to \cite[Eq. 10.30.1]{NIST}:
\[
I_{\nu }\left( z\right) \approx \frac{\left( z/2\right) ^{\nu }}{\Gamma \left( 1+\nu \right) },\quad z\rightarrow 0,\ \nu \neq -1,-2,\ldots
\]
which is equivalent to the asymptotic behaviour of $J_{\nu }\left( z\right)$ as $z\rightarrow 0$ given in Eq. (\ref{J_nu(z)_z->0}). Therefore, repeating the
same steps as in Theorem \ref{Theorem: DnBesselJ_z=0}, we arrive at Eq. (\ref{Dn_Inu_z->0}).
\end{proof}

\subsection{Macdonald function}

\begin{theorem}
For $\Re \left( z\right) >0$, the following integral representation for the $n$-th\ order derivative holds \cite{ReflectionJL}:\
\begin{equation}
\frac{\partial ^{n}K_{\nu }\left( z\right) }{\partial \nu ^{n}}=\frac{1}{2}
\int_{-\infty }^{+\infty }t^{n}\,e^{\nu t-z\cosh t}\,dt.  \label{Dn_Knu_x>0}
\end{equation}
\end{theorem}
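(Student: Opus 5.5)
The plan is to start from the integral representation of the Macdonald function in Eq. (\ref{K_nu_integral}), valid for $\Re(z)>0$,
\[
K_{\nu }\left( z\right) =\frac{1}{2}\int_{-\infty }^{+\infty }e^{\nu t-z\cosh t}\,dt,
\]
and differentiate $n$ times under the integral sign with respect to $\nu$. Since $\partial_\nu^n e^{\nu t}=t^{n}e^{\nu t}$, this formally gives Eq. (\ref{Dn_Knu_x>0}) at once. As in the earlier theorems, the only substantive point is to justify exchanging $\partial^n/\partial\nu^n$ with the improper integral over $\mathbb{R}$.

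For that justification I will use the standard dominated-convergence criterion for differentiating parameter integrals. Fix $z$ with $x:=\Re(z)>0$ and a compact interval $\nu\in[-N,N]$. For each $k\le n$ the $k$-th $\nu$-derivative of the integrand is $t^{k}e^{\nu t-z\cosh t}$. Its modulus satisfies
\[
\left\vert t^{k}e^{\nu t-z\cosh t}\right\vert =|t|^{k}e^{\nu t-x\cosh t}\le |t|^{k}e^{N|t|-x\cosh t}.
\]
The bound on the right is independent of $\nu$. It is also integrable on $\mathbb{R}$, because $x\cosh t\sim \tfrac{x}{2}e^{|t|}$ grows doubly exponentially and dominates $|t|^{k}e^{N|t|}$.

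With this majorant, an induction on $k$ proves that $\nu\mapsto \frac12\int t^{k}e^{\nu t-z\cosh t}\,dt$ is differentiable with derivative $\frac12\int t^{k+1}e^{\nu t-z\cosh t}\,dt$. Each induction step uses the mean value theorem on difference quotients together with dominated convergence. Since $N$ is arbitrary, this yields the formula for all real $\nu$.

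The main (and only mild) obstacle is producing the uniform integrable majorant, which is exactly where $\Re(z)>0$ is needed. Everything else is a direct computation.
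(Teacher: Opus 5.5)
Your proposal is correct and is the same argument as the paper's, which simply differentiates the representation in Eq.~(\ref{K_nu_integral}) $n$ times under the integral sign; your majorant $|t|^{k}e^{N|t|-x\cosh t}$, together with the mean value theorem applied to the real factor $e^{\nu t}$, supplies the justification the paper leaves implicit. The only slip is in wording: $x\cosh t$ grows exponentially, not doubly exponentially (it is $e^{-x\cosh t}$ that decays doubly exponentially), which is still more than enough to dominate $N|t|+k\log|t|$.
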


\begin{proof}
Take the $n$-th derivative with respect to the order in the integral representation given in Eq. (\ref{K_nu_integral}).
\end{proof}

\begin{theorem}
For $\Re \left( z\right) <0$, the following integral representation for the $n$-th\ order derivative holds:
\begin{eqnarray}
\frac{\partial ^{n}K_{\nu }\left( z\right) }{\partial \nu ^{n}}&=&\frac{1}{2}\int_{-\infty }^{+\infty }\left( t-i\,\mathrm{a}\left( z\right) \right)^{n}\,e^{\nu \left( t-i\,\mathrm{a}\left( z\right) \right) +z\cosh t}\,dt \label{Dn_Knu_x<0}\\
&&-i\,\mathrm{a}\left( z\right) \,\frac{\partial ^{n}I_{\nu }\left( -z\right) }{\partial \nu ^{n}}\,,  \notag
\end{eqnarray}
where $\frac{\partial ^{n}I_{\nu }\left( -z\right) }{\partial \nu ^{n}}$ is given by Eq. (\ref{DnBesselIRez>0}).
\end{theorem}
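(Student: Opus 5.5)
The plan is to obtain a reflection formula expressing $K_{\nu}(z)$ in terms of $K_{\nu}(-z)$ and $I_{\nu}(-z)$. Then, since $\Re(-z)>0$, the integral representations (\ref{K_nu_integral}) and (\ref{DnBesselIRez>0}) can be applied at $-z$, and we differentiate $n$ times with respect to $\nu$.

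\emph{Reflection formula.} Start from the definition (\ref{K_nu_def}) for $\nu\notin\mathbb{Z}$. Use Eq. (\ref{Reflection_I_nu}) in the form $I_{\pm\nu}(z)=e^{\pm i\nu\,\mathrm{a}(z)}I_{\pm\nu}(-z)$, and write $\mathrm{a}=\mathrm{a}(z)$. This gives
\[
K_{\nu}(z)=\frac{\pi}{2\sin\nu\pi}\left[e^{-i\nu \mathrm{a}}I_{-\nu}(-z)-e^{i\nu \mathrm{a}}I_{\nu}(-z)\right].
\]
Now substitute $I_{-\nu}(-z)=I_{\nu}(-z)+\frac{2}{\pi}\sin(\nu\pi)\,K_{\nu}(-z)$, which follows from (\ref{K_nu_def}). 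The result is
\[
K_{\nu}(z)=e^{-i\nu \mathrm{a}}K_{\nu}(-z)-\frac{\pi\, I_{\nu}(-z)}{2\sin\nu\pi}\left(e^{i\nu \mathrm{a}}-e^{-i\nu \mathrm{a}}\right)
=e^{-i\nu \mathrm{a}}K_{\nu}(-z)-i\pi\,\frac{\sin(\nu \mathrm{a})}{\sin(\nu\pi)}\,I_{\nu}(-z).
\]
For $\Re(z)<0$ we have $\mathrm{a}(z)=\pm\pi$, with the sign equal to that of $\arg z$. Hence $\sin(\nu\mathrm{a})/\sin(\nu\pi)=\mathrm{a}/\pi$, and the identity becomes
\[
K_{\nu}(z)=e^{-i\nu\,\mathrm{a}(z)}K_{\nu}(-z)-i\,\mathrm{a}(z)\,I_{\nu}(-z).
\]
Both sides are entire in $\nu$ (for fixed $z$ off the cut), so the identity extends by continuity to $\nu\in\mathbb{Z}$.

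\emph{Integral form.} Insert (\ref{K_nu_integral}) for $K_{\nu}(-z)$, which is valid because $\Re(-z)>0$:
\[
e^{-i\nu \mathrm{a}}K_{\nu}(-z)=\frac12\int_{-\infty}^{+\infty}e^{\nu(t-i\mathrm{a})+z\cosh t}\,dt.
\]
Differentiate $n$ times in $\nu$. Differentiation under the integral sign is justified because $e^{z\cosh t}$ decays doubly exponentially and so dominates any factor $|t-i\mathrm{a}|^n e^{\nu t}$, locally uniformly in $\nu$. Each derivative brings down a factor $(t-i\mathrm{a})$, which yields the first term of (\ref{Dn_Knu_x<0}). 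Since $\mathrm{a}(z)$ does not depend on $\nu$, the second term is simply $-i\,\mathrm{a}(z)\,\partial^n I_{\nu}(-z)/\partial\nu^n$, and by (\ref{DnBesselIRez>0}) this is given by an integral representation because $\Re(-z)>0$.

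The main obstacle is the reflection step: getting the branch conventions right, namely $\mathrm{a}(z)=\pm\pi$ and the precise sign in $I_{\nu}(z)=e^{i\nu\mathrm{a}}I_{\nu}(-z)$, and confirming that the ratio $\sin(\nu\mathrm{a})/\sin(\nu\pi)$ reduces to $\mathrm{a}/\pi$ for both signs. I would check this directly on the two cases $\arg z\in(\pi/2,\pi]$ and $\arg z\in(-\pi,-\pi/2)$.
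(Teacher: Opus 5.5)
Your proposal is correct and follows the paper's route: obtain $K_{\nu}(z)=e^{-i\nu\,\mathrm{a}(z)}K_{\nu}(-z)-i\,\mathrm{a}(z)\,I_{\nu}(-z)$, insert Eq. (\ref{K_nu_integral}) at $-z$, and differentiate $n$ times under the integral sign. The only difference is that the paper gets this identity by rearranging a reflection formula quoted from \cite{ReflectionxJL}, whereas you derive it from Eqs. (\ref{K_nu_def}) and (\ref{Reflection_I_nu}) plus continuity in $\nu$, which makes your argument self-contained; that continuity holds for every fixed $z\neq 0$ with principal branches, so your ``off the cut'' qualifier is unnecessary and negative real $z$ is covered too.
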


\begin{proof}
According to the reflection formula \cite{ReflectionxJL}:\
\[
I_{\nu }\left( z\right) =\frac{i}{\mathrm{a}\left( z\right) }\,\left[ K_{\nu}\left( z\right) \,e^{i\,\mathrm{a}\left( z\right) \,\nu }-K_{\nu }\left(-z\right) \right] ,
\]
we obtain
\[
K_{\nu }\left( z\right) =e^{-i\,\mathrm{a}\left( z\right) \,\nu }K_{\nu}\left( -z\right) -i\,\mathrm{a}\left( z\right) \,I_{\nu }\left( -z\right) .
\]
Now, apply the integral representation in Eq. (\ref{K_nu_integral}) to obtain for $\Re \left( z\right) <0$,
\[
K_{\nu }\left( z\right) =\frac{1}{2}\int_{-\infty }^{+\infty }e^{\nu \left( t-i\,\mathrm{a}\left( z\right) \right) +z\cosh t}\,dt-i\,\mathrm{a}\left( z\right) \,I_{\nu }\left( -z\right) .
\]
Finally, take the $n$-th derivative with respect to the order to complete the proof.
\end{proof}

\begin{theorem}
For $x>0$, the following formula for the $n$-th\ order derivative holds:
\begin{eqnarray*}
\frac{\partial ^{n}K_{\nu }\left( i\,x\right) }{\partial \nu ^{n}} &=&\frac{1}{2}\left\{ \int_{0}^{\infty }\left[ \left( t-i\,\frac{\pi }{2}\right)^{n}e^{\nu \left( t-i\pi /2\right) }+\tilde{q}^{n}\left( t\right) \,e^{\nu \,\tilde{q}\left( t\right) }\right] \,e^{-x\sinh t}\,dt\right. \\
&&-\left. i\int_{0}^{\pi }\tilde{p}^{n}\left( t\right) \,e^{-ix\sin t+\nu \,\tilde{p}\left( t\right) }\,dt\right\} ,
\end{eqnarray*}
where we have defined
\begin{eqnarray}
\tilde{p}\left( t\right) &:=&i\left( \frac{\pi }{2}+t\right) ,\label{p_tilde_def} \\
\tilde{q}\left( t\right) &:=&i\,\frac{\pi }{2}+t.  \label{q_tilde_def}
\end{eqnarray}
\end{theorem}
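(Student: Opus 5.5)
The plan is to reduce $K_{\nu}(i\,x)$ to Bessel functions of real positive argument $x$, for which the Schl\"{a}fli representations (\ref{Jnu_integral}) and (\ref{Ynu_integral}) hold. Then the integrand is rewritten as a sum of exponentials that are linear in $\nu$, and the result is differentiated $n$ times under the integral sign. This is the same strategy used for $J_{\nu}(i\,x)$ and $I_{\nu}(i\,x)$ above.

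\textbf{Reduction to a Hankel function.} I would invoke the standard connection formula (valid for $-\pi/2<\arg z\leq \pi$, see \cite[Eq. 10.27.8]{NIST})
\[
K_{\nu }\left( z\right) =-\frac{i\pi }{2}\,e^{-i\pi \nu /2}\,H_{\nu }^{\left( 2\right) }\left( -i\,z\right) .
\]
With $z=i\,x$ this gives
\[
K_{\nu }\left( i\,x\right) =-\frac{i\pi }{2}\,e^{-i\pi \nu /2}\left[ J_{\nu }\left( x\right) -i\,Y_{\nu }\left( x\right) \right] ,
\]
by (\ref{H(2)(z)_def}). The formula (\ref{H(2)_integral}) cannot be used here, since $\Im(x)=0$, which is why I pass through $J$ and $Y$ instead.

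\textbf{Combining the Schl\"{a}fli integrals.} Inserting (\ref{Jnu_integral}) and (\ref{Ynu_integral}) with $z=x>0$:

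\emph{Finite part.} The integrals over $[0,\pi]$ combine into
\[
\frac{1}{\pi}\int_{0}^{\pi }e^{-i\left( x\sin t-\nu t\right) }dt .
\]

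\emph{Infinite part.} The integrals over $[0,\infty)$ combine, using $-\sin \nu \pi +i\cos \nu \pi =i\,e^{i\nu \pi }$, into
\[
\frac{i}{\pi }\int_{0}^{\infty }\left[ e^{\nu t}+e^{i\nu \pi -\nu t}\right] e^{-x\sinh t}\,dt .
\]

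\emph{Prefactor.} Multiplying by $-\frac{i\pi}{2}e^{-i\pi\nu/2}$ turns the infinite part into
\[
\frac{1}{2}\int_0^\infty\left[e^{\nu(t-i\pi/2)}+e^{\nu(i\pi/2-t)}\right]e^{-x\sinh t}\,dt .
\]
It turns the finite part into
\[
-\frac{i}{2}\int_0^\pi e^{-ix\sin t+i\nu(t-\pi/2)}\,dt .
\]

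\emph{Matching the statement.} I then use the changes of variable $t\mapsto \pi-t$, or $t\mapsto -t$ in the second exponential of the infinite part, together with the symmetry of $\sin t$ and $\sinh t$. The aim is to bring the exponents into the forms $\nu\,\tilde p(t)$ and $\nu\,\tilde q(t)$ with $\tilde p,\tilde q$ as in (\ref{p_tilde_def})--(\ref{q_tilde_def}). Each exponent is then $\nu$ times a function of $t$ only, multiplying a factor $e^{-ix\sin t}$ or $e^{-x\sinh t}$ that does not depend on $\nu$.

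\textbf{Differentiation.} Once every term has the form $e^{\nu\,w(t)}$ with $w$ independent of $\nu$, the $n$-th derivative simply brings down $w(t)^n$. Differentiation under the integral sign is justified because $e^{-x\sinh t}$ decays doubly exponentially. This dominates $|t\pm i\pi/2|^n e^{|\nu| t}$ uniformly for $\nu$ in compact sets, and the finite integral poses no difficulty. This yields the stated formula.

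\textbf{Main obstacle.} The delicate step is the bookkeeping of phases and signs: choosing the correct branch of the connection formula at $\arg z=\pi/2$, and reconciling the resulting exponents with the paper's definitions of $\tilde p$ and $\tilde q$. My direct computation produces $e^{\nu(i\pi/2-t)}$ and $e^{i\nu(t-\pi/2)}$. So I would check carefully, numerically at one value of $\nu$ if needed, which substitution or which form of the connection formula (e.g.\ via $H^{(1)}_\nu(-x)$ and the reflection relations of \cite{ReflectionxJL}) produces exactly $\tilde q(t)=i\pi/2+t$ and $\tilde p(t)=i(\pi/2+t)$.
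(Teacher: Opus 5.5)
Your reduction and your two combined integrals are correct, and this is essentially the paper's route. The paper starts from $K_{\nu}(i\,x)=-\frac{\pi}{2}\bigl[e^{-i\pi\nu/2}Y_{\nu}(x)+i\,e^{-i\pi\nu}I_{\nu}(i\,x)\bigr]$. Since $I_{\nu}(i\,x)=e^{i\pi\nu/2}J_{\nu}(x)$, this is exactly your $-\frac{i\pi}{2}e^{-i\pi\nu/2}H^{(2)}_{\nu}(x)$. The paper then inserts the Schl\"{a}fli integrals for $Y_\nu(x)$ and $J_\nu(x)$, as you do. Your domination argument for differentiating under the integral sign is also fine.

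The step that cannot be carried out is ``Matching the statement''. The reason is that the formula as printed is false, not that your phases are off.

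\emph{Counterexample.} Take $n=0$, $\nu=1$. With $\tilde q(t)=i\pi/2+t$, the two infinite-range terms give $e^{t}\bigl(e^{-i\pi/2}+e^{i\pi/2}\bigr)=0$. With $\tilde p(t)=i(\pi/2+t)$, the right-hand side collapses to $\frac12\int_0^\pi e^{i(t-x\sin t)}\,dt$, whose real part is $\frac{\pi}{2}J_1(x)$ by Bessel's integral. But $K_1(i\,x)=-\frac{\pi}{2}H^{(2)}_1(x)$ has real part $-\frac{\pi}{2}J_1(x)$.

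Hence no change of variables will work. The substitution $t\mapsto\pi-t$ only yields $e^{i\nu(\pi/2-t)}$, while $t\mapsto -t$ moves the range to negative $t$ and turns $e^{-x\sinh t}$ into $e^{x\sinh t}$. No other connection formula will help either, since any valid one represents the same function.

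\emph{Where the discrepancy comes from.} The paper's own proof identifies $\tilde p(t)=-p_{+}(-t)$ and $\tilde q(t)=-q_{-}(-t)$. With $p_{+}(t)=i(\pi/2+t)$ and $q_{-}(t)=-i\pi/2-t$, these are $i(t-\pi/2)$ and $i\pi/2-t$, precisely the exponents you found. So the printed definitions of $\tilde p$ and $\tilde q$ contain sign misprints.

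Drop the matching step and differentiate your expression directly:
\begin{align*}
\frac{\partial^{n}K_{\nu}(i\,x)}{\partial\nu^{n}}={}&\frac12\int_0^\infty\Bigl[\Bigl(t-\frac{i\pi}{2}\Bigr)^{n}e^{\nu(t-i\pi/2)}+\Bigl(\frac{i\pi}{2}-t\Bigr)^{n}e^{\nu(i\pi/2-t)}\Bigr]e^{-x\sinh t}\,dt\\
&-\frac{i}{2}\int_0^\pi\Bigl(i\Bigl(t-\frac{\pi}{2}\Bigr)\Bigr)^{n}e^{-ix\sin t+i\nu(t-\pi/2)}\,dt.
\end{align*}
This is the theorem with $\tilde p(t)=i(t-\pi/2)$ and $\tilde q(t)=i\pi/2-t$.
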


\begin{proof}
According to \cite[Eq. 10.27.11]{NIST}, we have for $x>0$
\begin{equation}
K_{\nu }\left( i\,x\right) =-\frac{\pi }{2}\left[ e^{-i\pi \nu /2}Y_{\nu}\left( x\right) +i\,e^{-i\pi \nu }I_{\nu }\left( i\,x\right) \right] .
\label{Knu(ix)}
\end{equation}
Taking into account the integral representations given in Eqs. (\ref{Ynu_integral})\ and (\ref{I_nu_integral}), as well as the definitions given in Eqs. (\ref{p+-(t)_def})-(\ref{q+-(t)_def}), we obtain
\begin{eqnarray}
&&e^{-i\pi \nu /2}Y_{\nu }\left( x\right)   \label{exp*Ynu(x)} \\
&=&\frac{1}{2\pi }\left\{ i\int_{0}^{\pi }\left[ e^{-i\,x\sin t}e^{-\nu\,p_{+}\left( -t\right) }-e^{i\,x\sin t}e^{-\nu \,p_{-}\left( -t\right) }\right] \,dt\right.   \nonumber \\
&&-\left. \int_{0}^{\infty }\left[ 2\,e^{\nu \left( t-i\pi /2\right)}+e^{-\nu \,q_{+}\left( -t\right) }+e^{-\nu \,q_{-}\left( -t\right) }\right]\,e^{-x\sinh t}dt\right\} ,  \nonumber
\end{eqnarray}
and
\begin{eqnarray}
&&i\,e^{-i\pi \nu }I_{\nu }\left( i\,x\right)   \label{exp*Inu(ix)} \\
&=&\frac{1}{2\pi }\left\{ i\int_{0}^{\pi }\left[ e^{-i\,x\sin t}e^{-\nu\,p_{+}\left( -t\right) }+e^{i\,x\sin t}e^{-\nu \,p_{-}\left( -t\right) }\right] \,dt\right.   \nonumber \\
&&+\left. \int_{0}^{\infty }\left[ e^{-\nu \,q_{+}\left( -t\right) }-e^{-\nu\,q_{-}\left( -t\right) }\right] \,e^{-x\sinh t}dt\right\} .  \nonumber
\end{eqnarray}
Substitute Eq. (\ref{exp*Ynu})\ and Eq.(\ref{exp*Inu(ix)})\ in Eq. (\ref{Knu(ix)}), taking into account that
\begin{eqnarray*}
\tilde{p}\left( t\right)  &=&-p_{+}\left( -t\right) , \\
\tilde{q}\left( t\right)  &=&-q_{-}\left( -t\right) ,
\end{eqnarray*}
to obtain
\begin{eqnarray*}
K_{\nu }\left( i\,x\right)  &=&\frac{1}{2}\left\{ \int_{0}^{\infty }\left[e^{\nu \left( t-i\pi /2\right) }+e^{\nu \,\tilde{q}\left( t\right) }\right]\,e^{-x\sinh t}dt\right.  \\
&&-\left. i\int_{0}^{\pi }e^{-i\,x\sin t}e^{\nu\,\tilde{p}\left( t\right)}\,dt\right\} .
\end{eqnarray*}
Finally, take the $n$-th derivative with respect to the order to complete the proof.
\end{proof}

\begin{corollary}
For $x>0$, the following formula for the $n$-th\ order derivative holds:
\[
\frac{\partial ^{n}K_{\nu }\left( -i\,x\right) }{\partial \nu ^{n}}=\overline{\frac{\partial ^{n}K_{\nu }\left( i\,x\right) }{\partial \nu ^{n}}.}
\]
\end{corollary}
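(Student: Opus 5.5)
Following the pattern of the preceding corollaries for $J_\nu$, $Y_\nu$ and $I_\nu$, I will first establish the conjugation symmetry of the Macdonald function itself and then transfer it to the derivatives with respect to the (real) order. The property needed is, for $z\notin(-\infty,0]$ \cite[Eq. 10.34.7]{NIST},
\[
K_{\nu}\left(\bar z\right)=\overline{K_{\nu}\left(z\right)},\qquad \nu\in\mathbb{R}.
\]
If I want to verify it rather than only cite it, I can argue from the definition in Eq. (\ref{K_nu_def}). Eq. (\ref{BesselI_conjugate}) holds for real order, so for $\nu\notin\mathbb{Z}$ both $I_{\nu}(\bar z)$ and $I_{-\nu}(\bar z)$ are the conjugates of $I_{\pm\nu}(z)$. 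The prefactor $\pi/(2\sin\nu\pi)$ is real. Hence $K_\nu(\bar z)=\overline{K_\nu(z)}$ for $\nu\notin\mathbb{Z}$, and the integer case follows by taking the limit $\nu\to n$, since conjugation is continuous.

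Next I specialize to $z=i\,x$ with $x>0$. This point does not lie on $(-\infty,0]$, and $\overline{i\,x}=-i\,x$. Therefore
\[
K_{\nu}\left(-i\,x\right)=K_{\nu}\left(\overline{i\,x}\right)=\overline{K_{\nu}\left(i\,x\right)}
\]
for every real $\nu$.

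Finally I differentiate $n$ times with respect to $\nu$. Because $\nu$ is real, $\partial/\partial\nu$ commutes with complex conjugation: writing $K_\nu(i\,x)=u(\nu)+i\,v(\nu)$ with $u,v$ real-valued smooth functions of $\nu$, the $n$-th derivative of $u-i\,v$ is the conjugate of the $n$-th derivative of $u+i\,v$. Smoothness in $\nu$ is guaranteed, since $K_\nu(z)$ is entire in $\nu$; this is also visible from the representation in the previous theorem, where differentiation under the integral sign is justified by the decay of $e^{-x\sinh t}$. Applying this gives the stated identity.

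The only point requiring care is the integer-order case of the conjugation symmetry. It is handled either by the direct citation of \cite[Eq. 10.34.7]{NIST} or by the limit argument above, and I do not expect any real obstacle.
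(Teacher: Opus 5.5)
Your proposal is correct and follows the same route as the paper: it cites the conjugation symmetry $K_\nu(\bar z)=\overline{K_\nu(z)}$ from \cite[Eq. 10.34.7]{NIST}, specializes to $z=i\,x$, and differentiates $n$ times in the real variable $\nu$. Your additional justifications are sound, namely the derivation of the symmetry from the definition via the $I_{\pm\nu}$ conjugation property (with a limit for integer order) and the remark that $\partial/\partial\nu$ commutes with conjugation for real $\nu$; they make explicit what the paper leaves implicit.
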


\begin{proof}
Applying the property for $z \notin \left(-\infty, 0 \right] $ \cite[Eq. 10.34.7]{NIST}:
\begin{equation}
K_{\nu }\left( \bar{z}\right) =\overline{K_{\nu }\left( z\right) }, \label{BesselK_Conjugte}
\end{equation}
we obtain for $x>0$,
\[
K_{\nu }\left( -i\,x\right) =K_{\nu }\left( \overline{i\,x}\right) =\overline{K_{\nu }\left( i\,x\right) }.
\]%
Finally, take the the $n$-th derivative with respect to the order to complete the proof.
\end{proof}

\begin{theorem}
For $z=0$, we have the following cases for the $n$-th\ order derivative:
\[
\frac{\partial ^{n}K_{\nu }\left( 0\right) }{\partial \nu ^{n}}=\left\{
\begin{array}{cc}
\mathrm{Divergent,} & \nu \neq 0, \\
0, & \nu =0,\,n=1,3,5,\ldots \\
\mathrm{Divergent,} & \nu =0,\,n=2,4,6,\ldots
\end{array}
\right.
\]
\end{theorem}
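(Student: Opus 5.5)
I would follow the pattern of Theorems \ref{Theorem: DnBesselJ_z=0} and \ref{Theorem: DnYnu(0)}. I would read the value at $z=0$ as the limit $z\rightarrow 0$ of the derivative, and get that derivative from the small-argument behaviour of $K_{\nu }(z)$. The starting point is the symmetry $K_{-\nu }(z)=K_{\nu }(z)$, which follows at once from Eq. (\ref{K_nu_def}). It gives $\partial ^{n}K_{\nu }/\partial \nu ^{n}|_{\nu }=(-1)^{n}\partial ^{n}K_{\nu }/\partial \nu ^{n}|_{-\nu }$, so it is enough to treat $\nu \geq 0$. The same symmetry settles the case $\nu =0$ with $n$ odd: $K_{\nu }(z)$ is an even function of $\nu $, so every odd $\nu $-derivative vanishes at $\nu =0$ for every $z$. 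Hence the limit is $0$. Equivalently, in Eq. (\ref{Dn_Knu_x>0}) the integrand $t^{n}e^{-z\cosh t}$ is odd for odd $n$, which gives zero for every $\Re (z)>0$.

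For $\nu >0$ I would use $K_{\nu }(z)\approx \frac{1}{2}\Gamma (\nu )(z/2)^{-\nu }$ as $z\rightarrow 0$, which follows from Eq. (\ref{K_nu_def}) and the leading term of Eq. (\ref{Inu_def}). Differentiating $n$ times in $\nu $, the dominant term is $\frac{1}{2}\Gamma (\nu )(-1)^{n}\log ^{n}(z/2)\,(z/2)^{-\nu }$. This diverges because $(z/2)^{-\nu }\rightarrow \infty $, and for $n\geq 1$ there is the extra power of the logarithm. The other terms carry lower powers of $\log (z/2)$ and cannot cancel the leading one, exactly as in Theorem \ref{Theorem: DnBesselJ_z=0}. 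A cleaner route avoids justifying the differentiation of an asymptotic relation. Take $z=x>0$ in Eq. (\ref{Dn_Knu_x>0}) and write the integral as $\int_{0}^{\infty }t^{n}\left[ e^{\nu t}+(-1)^{n}e^{-\nu t}\right] e^{-x\cosh t}dt$. The $e^{\nu t}$ part is positive and grows without bound as $x\rightarrow 0^{+}$, by monotone convergence to $\int_{0}^{\infty }t^{n}e^{\nu t}dt=\infty $. The $e^{-\nu t}$ part stays bounded by $\Gamma (n+1)/\nu ^{n+1}$. So the derivative diverges, and by the symmetry above the case $\nu <0$ follows.

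For $\nu =0$ with $n$ even (this includes $n=0$, where $K_{0}\sim -\log z$ diverges), Eq. (\ref{Dn_Knu_x>0}) gives $\int_{0}^{\infty }t^{n}e^{-x\cosh t}dt$. This integrand is positive and increases to $t^{n}$ as $x\rightarrow 0^{+}$. By monotone convergence the integral tends to $+\infty $; concretely it behaves like $\log ^{n+1}(2/x)/(n+1)$, since $x\cosh t\approx 1$ near $t\approx \log (2/x)$.

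The main obstacle is interpreting ``divergent'' for complex $z\rightarrow 0$ rather than along the positive reals. I would handle it by noting that the leading asymptotic term $\frac{1}{2}(-1)^{n}\Gamma (\nu )\log ^{n}(z/2)(z/2)^{-\nu }$ has modulus tending to infinity in any sector $|\arg z|<\pi $. For $\nu =0$, $n$ even, I would use the series $\partial _{\nu }^{n}K_{\nu }|_{\nu =0}\sim c_{n}\log ^{n+1}(z/2)$, which also blows up in modulus. For the statement itself, showing divergence along $z=x\rightarrow 0^{+}$ is already enough.
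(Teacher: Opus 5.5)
Your proposal is correct. The case $\nu=0$, $n$ odd is handled exactly as in the paper, through the evenness $K_{-\nu}(z)=K_{\nu}(z)$. The reduction of $\nu<0$ to $\nu>0$ uses the same symmetry.

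The divergent cases are where you differ. The paper only quotes the small-$z$ forms $K_{\nu}(z)\approx\frac12\Gamma(|\nu|)(z/2)^{-|\nu|}$ for $\nu\neq0$ and $K_{0}(z)\approx-\log z$, and declares it ``clear'' that their $\nu$-derivatives diverge. Your main argument instead works directly with Eq.~(\ref{Dn_Knu_x>0}) at $z=x>0$. You split it as $\int_{0}^{\infty}t^{n}[e^{\nu t}+(-1)^{n}e^{-\nu t}]e^{-x\cosh t}\,dt$, apply monotone convergence to the first half, and bound the second by $n!/\nu^{n+1}$.

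This gives genuine rigour where the paper is heuristic: you never differentiate an asymptotic relation with respect to a parameter. In particular you treat $\nu=0$, $n$ even properly. The paper's relation $K_{0}(z)\approx-\log z$ holds only at the single point $\nu=0$, so by itself it says nothing about $\nu$-derivatives there. Your positive integral $\int_{0}^{\infty}t^{n}e^{-x\cosh t}\,dt$ yields both the divergence and its rate $\log^{n+1}(2/x)/(n+1)$.

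What your route gives up is generality of approach. It proves blow-up only as $x\to0^{+}$, which is enough to rule out any finite value at $z=0$. Your remarks on arbitrary complex approach to $0$ rest on the same differentiated-asymptotics heuristic as the paper's proof.
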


\begin{proof}
According to \cite[Eq. 5.7.10]{Lebedev}, we have the symmetry relation:\
\begin{equation}
K_{\nu }\left( z\right) =K_{-\nu }\left( z\right) .  \label{Knu=K-nu}
\end{equation}
Therefore, taking the $n$-th derivative with respect to the order, we obtain:
\[
\frac{\partial ^{n}K_{\nu }\left( z\right) }{\partial \nu ^{n}}=\left( -1\right) ^{n}\frac{\partial ^{n}K_{-\nu }\left( z\right) }{\partial \nu ^{n}}.
\]
Consequently,
\[
\left. \frac{\partial ^{n}K_{\nu }\left( z\right) }{\partial \nu ^{n}}\right\vert _{\nu =0}=0,\quad n=1,3,5,\ldots
\]
For the other cases, we have a divergent behaviour. Indeed, according to \cite[Eq. 10.30.2]{NIST} and the symmetry property in Eq. (\ref{Knu=K-nu}), we have the following asymptotic behaviour of the Macdonald function as $z\rightarrow 0$:
\[
K_{\nu }\left( z\right) \approx \left\{
\begin{array}{ll}
\displaystyle
\frac{1}{2}\Gamma \left( \left\vert \nu \right\vert \right) \left( \frac{z}{2}\right) ^{-\left\vert \nu \right\vert }, & \nu \neq 0, \\
\displaystyle
-\log z, & \nu =0.
\end{array}
\right.
\]
It is clear that the $n$-th\ derivative with respect to the order in all the above cases produces divergent terms when $z\rightarrow 0$.
\end{proof}

\subsection{Hankel function of the first kind}

\begin{theorem}
For $\Re \left( z\right) >0$, the following formula for the $n$-th\ order derivative holds:
\begin{eqnarray*}
\frac{\partial ^{n}H_{\nu }^{\left( 1\right) }\left( z\right) }{\partial \nu^{n}} &=&\frac{1}{\pi }\left\{ \int_{0}^{\pi }t^{n}\,e^{i\left( z\sin t-\nu t-\pi n/2\right) }\,dt\right. \\
&&-\left. i\int_{0}^{\infty }\left[ t^{n}\,e^{\nu t}+\left( -i\pi -t\right)^{n}\,e^{-\nu \left( i\pi +t\right) }\right] \,e^{-z\sinh t}\,dt\right\} .
\end{eqnarray*}
\end{theorem}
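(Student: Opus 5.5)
The plan is to combine the Schläfli representations (\ref{Jnu_integral}) and (\ref{Ynu_integral}) through the definition (\ref{H(1)(z)_def}), $H_{\nu }^{(1)}(z)=J_{\nu }(z)+i\,Y_{\nu }(z)$. Both representations hold for $\Re (z)>0$, so their combination gives an integral representation of $H_{\nu }^{(1)}(z)$ on that half-plane. The $n$-th derivative with respect to $\nu$ is then taken under the integral sign, exactly as in the proofs of (\ref{DnJnuRez>0}) and (\ref{DnYnuRez>0}).

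First, the finite part. Adding the $[0,\pi]$ integrals gives
\[
\frac{1}{\pi }\int_{0}^{\pi }\left[ \cos (z\sin t-\nu t)+i\sin (z\sin t-\nu t)\right] dt=\frac{1}{\pi }\int_{0}^{\pi }e^{i(z\sin t-\nu t)}\,dt .
\]
Differentiating $n$ times in $\nu$ produces the factor $(-it)^{n}=t^{n}e^{-i\pi n/2}$, which yields the first integral of the statement.

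Second, the infinite part. The contribution of $J_\nu$ is $-\frac{\sin \nu \pi }{\pi }\int_0^\infty e^{-z\sinh t-\nu t}\,dt$. The contribution of $i\,Y_\nu$ is $-\frac{i}{\pi }\int_0^\infty (e^{\nu t}+e^{-\nu t}\cos \nu \pi )\,e^{-z\sinh t}\,dt$. Grouping the two terms carrying $e^{-\nu t}$, the combined coefficient is
\[
-\frac{i}{\pi }\left( \cos \nu \pi -i\sin \nu \pi \right) e^{-\nu t}=-\frac{i}{\pi }\,e^{-\nu (i\pi +t)} .
\]
So the infinite part equals $-\frac{i}{\pi }\int_0^\infty \left[ e^{\nu t}+e^{-\nu (i\pi +t)}\right] e^{-z\sinh t}\,dt$. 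Differentiating $n$ times gives $t^{n}e^{\nu t}$ and $(-i\pi -t)^{n}e^{-\nu (i\pi +t)}$, which is the second integral of the statement.

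The only point needing care is justifying differentiation under the integral sign on $[0,\infty)$. For $\Re (z)>0$, the factor $e^{-z\sinh t}$ decays like $\exp(-\Re (z)\,e^{t}/2)$. This dominates $t^{n}e^{|\nu |t}$ locally uniformly in $\nu$, so dominated convergence applies. The finite integral is unproblematic, since its integrand is smooth on a compact interval. I expect no real obstacle beyond the bookkeeping of the exponent $e^{-\nu (i\pi +t)}$.
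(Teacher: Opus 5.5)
Your proposal is correct and is essentially the paper's proof. The paper adds the already-differentiated formulas (\ref{DnJnuRez>0}) and (\ref{DnYnuRez>0}) via (\ref{DnH1nu=DnJnu+i*DnYnu}) and simplifies with $-i\,\bar{w}=-i\,\Re(w)-\Im(w)$; you instead combine the undifferentiated Schl\"{a}fli integrals first (which gives Corollary \ref{Hankel1_integral_Rez_positive}) and then differentiate, so it is the same computation in the other order, with the minor benefit of never passing through the $\Re/\Im$ forms, which require real $\nu$.
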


\begin{proof}
According to the definition of the Hankel function of the first kind in Eq. (\ref{H(1)(z)_def}), we have
\begin{equation}
\frac{\partial ^{n}H_{\nu }^{\left( 1\right) }\left( z\right) }{\partial \nu^{n}}=\frac{\partial ^{n}J_{\nu }\left( z\right) }{\partial \nu ^{n}}+i\,\frac{\partial ^{n}Y_{\nu }\left( z\right) }{\partial \nu ^{n}}.
\label{DnH1nu=DnJnu+i*DnYnu}
\end{equation}
Substitute the integral representations given in Eqs. (\ref{DnJnuRez>0})\ and (\ref{DnYnuRez>0}), taking into account the following identities:
\begin{eqnarray*}
e^{i\,z} &=&\cos z+i\sin z, \\
-i\,\bar{z} &=&-i\,\Re \left( z\right) -\Im \left( z\right) .
\end{eqnarray*}
Simplify the result to complete the proof.
\end{proof}

\begin{corollary} \label{Hankel1_integral_Rez_positive}
For $\Re \left( z\right) >0$, the following integral representation holds:
\[
H_{\nu }^{\left( 1\right) }\left( z\right) =\frac{1}{\pi }\left\{\int_{0}^{\pi }e^{i\left( z\sin t-\nu t\right) }\,dt-i\int_{0}^{\infty }\left[ e^{\nu t}+e^{-\nu \left( i\pi +t\right) }\right] \,e^{-z\sinh t}\,dt\right\} .
\]
\end{corollary}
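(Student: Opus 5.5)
The corollary is the case $n=0$ of the preceding theorem for $\partial^{n}H_{\nu}^{(1)}(z)/\partial\nu^{n}$. I would set $n=0$ there. Then $t^{0}=1$, $(-i\pi-t)^{0}=1$ and $e^{-i\pi n/2}=1$. The first integral reduces to $\int_{0}^{\pi}e^{i(z\sin t-\nu t)}\,dt$, and the second to $-i\int_{0}^{\infty}\left[e^{\nu t}+e^{-\nu(i\pi+t)}\right]e^{-z\sinh t}\,dt$. This is exactly the displayed formula, valid for $\Re(z)>0$ as in the theorem.

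Because that theorem was itself obtained by a simplification step, I would also check the $n=0$ case directly from the sources. By Eq. (\ref{H(1)(z)_def}), $H_{\nu}^{(1)}(z)=J_{\nu}(z)+iY_{\nu}(z)$, and I would insert the Schl\"{a}fli representations Eqs. (\ref{Jnu_integral}) and (\ref{Ynu_integral}).

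For the finite integrals, $\cos(z\sin t-\nu t)+i\sin(z\sin t-\nu t)=e^{i(z\sin t-\nu t)}$, which gives $\frac{1}{\pi}\int_{0}^{\pi}e^{i(z\sin t-\nu t)}\,dt$.

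For the infinite integrals, I collect the coefficient of $e^{-z\sinh t}$:
\[
-\frac{\sin\nu\pi}{\pi}e^{-\nu t}-\frac{i}{\pi}\left(e^{\nu t}+e^{-\nu t}\cos\nu\pi\right)
=-\frac{i}{\pi}\left[e^{\nu t}+e^{-\nu t}\left(\cos\nu\pi-i\sin\nu\pi\right)\right].
\]
Using $\cos\nu\pi-i\sin\nu\pi=e^{-i\nu\pi}$, the bracket becomes $e^{\nu t}+e^{-\nu(i\pi+t)}$, which reproduces the statement.

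There is no real obstacle here; only a sign needs care. Factoring $-i/\pi$ out of $-\sin(\nu\pi)/\pi$ leaves $-i\sin\nu\pi$ inside the bracket. That is what combines with $\cos\nu\pi$ into $e^{-i\nu\pi}$ rather than $e^{i\nu\pi}$.

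The convergence of both infinite integrals for $\Re(z)>0$ is inherited from Eqs. (\ref{Jnu_integral}) and (\ref{Ynu_integral}), so no extra justification is needed.
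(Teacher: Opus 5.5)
Your proposal is correct and is essentially the paper's own argument. The paper gives no separate proof: the corollary is the $n=0$ case of the preceding theorem. Your direct check, inserting the Schl\"{a}fli integrals into $J_\nu+iY_\nu$ and using $\cos\nu\pi-i\sin\nu\pi=e^{-i\nu\pi}$, is the same computation that underlies that theorem's proof, and your signs are right.
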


\begin{theorem}
For $\Re \left( z\right) <0$, the following formula for the order derivative of $n$-th order holds:
\begin{eqnarray}
\frac{\partial ^{n}H_{\nu }^{\left( 1\right) }\left( z\right) }{\partial \nu^{n}} &=&\frac{1}{\pi }\left\{ \int_{0}^{\pi }\left[ \tilde{P}_{a}\left( \nu,z,n,t\right) +\tilde{P}_{b}\left( \nu ,z,n,t\right) \right] \,dt\right.\label{DnH1Rez<0} \\
&&+\left. \int_{0}^{\infty }\left[ \tilde{Q}_{a}\left( \nu ,z,n,t\right) +\tilde{Q}_{b}\left( \nu ,z,n,t\right) \right] \,e^{z\sinh t}dt\right\} ,
\nonumber
\end{eqnarray}
where we have defined
\begin{eqnarray*}
&&\tilde{P}_{a}\left( \nu ,z,n,t\right) :=\frac{1}{2}\,\left[\,r_{+}^{n}\left( z,t\right) \,e^{\nu \,r_{+}\left( z,t\right)}-r_{+}^{n}\left( -z,t\right) \,e^{\nu \,r_{+}\left( -z,t\right) }\right]
\,e^{i\,z\sin t} \\
&&\qquad \qquad \qquad +\frac{1}{2}\,\left[ \,r_{-}^{n}\left( z,t\right)\,e^{\nu \,r_{-}\left( z,t\right) }+r_{-}^{n}\left( -z,t\right) \,e^{\nu\,r_{-}\left( -z,t\right) }\right] \,e^{-i\,z\sin t}, \\
&&\tilde{P}_{b}\left( \nu ,z,n,t\right) :=-P_{b}\left( \nu ,z,n,t\right) , \\
&&\tilde{Q}_{a}\left( \nu ,z,n,t\right) :=\frac{\,\mathrm{a}\left( z\right)}{\pi }\,\Im \left[ \left( 2\pi i-t\right) ^{n}\,e^{\left( 2\pi i-t\right)\nu }\right] -i\,\left( t-i\,\mathrm{a}\left( z\right) \right) ^{n}e^{\nu\,\left( t-i\,\mathrm{a}\left( z\right) \right) }, \\
&&\tilde{Q}_{b}\left( \nu ,z,n,t\right) :=-\Im \left[ s_{+}^{n}\left(z,t\right) \,e^{\nu \,s_{+}\left( z,t\right) }\right] \,-i\,\Re \left[s_{-}^{n}\left( z,t\right) \,e^{\nu \,s_{-}\left( z,t\right) }\right] .
\end{eqnarray*}
\end{theorem}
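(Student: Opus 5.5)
The plan mirrors the proof of the $\Re(z)>0$ case of the Hankel function. Starting from the definition (\ref{H(1)(z)_def}), the derivative splits as in (\ref{DnH1nu=DnJnu+i*DnYnu}):
\[
\frac{\partial ^{n}H_{\nu }^{\left( 1\right) }\left( z\right) }{\partial \nu^{n}}=\frac{\partial ^{n}J_{\nu }\left( z\right) }{\partial \nu ^{n}}+i\,\frac{\partial ^{n}Y_{\nu }\left( z\right) }{\partial \nu ^{n}},
\]
now with $\Re(z)<0$. Into this I will substitute the two representations already proved for the left half-plane: Eq. (\ref{DnJnuRez<0}) for $J_\nu$ and Eq. (\ref{DnYnuRez<0}) for $Y_\nu$. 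Both are finite integrals over $t\in[0,\pi]$ (or over $[-\pi,\pi]$) plus infinite integrals over $[0,\infty)$ weighted by $e^{z\sinh t}$. Since $\Re(z)<0$, the weight decays and the sum can be taken integrand by integrand. No new analytic input is needed; the statement is obtained by rearrangement.

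\textbf{Finite part.} First I fold the $\int_{-\pi}^{\pi}$ in (\ref{DnJnuRez<0}) back onto $[0,\pi]$ by $t\mapsto -t$. Since $r_+(z,-t)=r_-(z,t)$ and $\sin(-t)=-\sin t$, this gives
\[
\tfrac12\bigl[r_+^n(z,t)e^{\nu r_+(z,t)+iz\sin t}+r_-^n(z,t)e^{\nu r_-(z,t)-iz\sin t}\bigr],
\]
which, after the overall factor $1/\pi$, is exactly the $r_\pm(z,t)$ part of $\tilde P_a$. Next, $i$ times the finite part of (\ref{DnYnuRez<0}) equals $i\cdot i\,(P_a+P_b)=-(P_a+P_b)$. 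The term $-P_a$ supplies the $r_\pm(-z,t)$ terms of $\tilde P_a$ with the stated signs ($-$ on $r_+$, $+$ on $r_-$). The term $-P_b$ is $\tilde P_b$ directly.

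\textbf{Infinite part.} From $J$ I get $\frac{i}{2}\bigl[s_+^n(z,t)e^{\nu s_+(z,t)}-s_-^n(z,t)e^{\nu s_-(z,t)}\bigr]$. From $iY$ I get $-i(Q_a+Q_b)$. Then:
\begin{itemize}
\item $-iQ_a=\frac{\mathrm a(z)}{\pi}\Im[\cdots]$ gives the first term of $\tilde Q_a$.
\item The piece $-i\,(t-i\,\mathrm a(z))^n e^{\nu(t-i\,\mathrm a(z))}$ of $-iQ_b$ gives the second term of $\tilde Q_a$.
\item What remains is the combination of $s_\pm(z,t)$ from $J$ and $-\frac{i}{2}\bigl[s_+^n(-z,t)e^{\nu s_+(-z,t)}+s_-^n(-z,t)e^{\nu s_-(-z,t)}\bigr]$ from $Y$, which must collapse to $\tilde Q_b$.
\end{itemize}

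\textbf{Main obstacle.} The hard step is this last collapse. It requires relating $s_\pm(-z,t)$ to $s_\pm(z,t)$. Since $\mathrm a(-z)=-\mathrm a(z)$, we have
\[
s_\pm(-z,t)=-t-i(\mathrm a(z)\mp\pi)=\overline{s_\mp(z,t)}.
\]
With $\nu$ and $t$ real, each $-z$ term is therefore the complex conjugate of the corresponding $z$ term. Pairing them gives:
\begin{itemize}
\item $\tfrac{i}{2}\bigl(w-\bar w\bigr)=-\Im w$ for $w=s_+^n(z,t)e^{\nu s_+(z,t)}$;
\item $-\tfrac{i}{2}\bigl(w+\bar w\bigr)=-i\Re w$ for $w=s_-^n(z,t)e^{\nu s_-(z,t)}$.
\end{itemize}
These are exactly the two terms of $\tilde Q_b$. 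I will verify the analogous conjugation identity $r_\pm(-z,t)$ versus $r_\mp(z,t)$, together with the sign conventions carried over from $P_a$, because sign slips are the likeliest failure point. Collecting the prefactor $1/\pi$ then completes the proof.
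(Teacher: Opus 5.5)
Your proposal is correct and follows the paper's proof: substitute the left-half-plane representations of $\partial^n J_\nu$ and $\partial^n Y_\nu$ into $\partial^n H^{(1)}_\nu=\partial^n J_\nu+i\,\partial^n Y_\nu$, then collapse the $s_\pm(-z,t)$ terms with the conjugation identity $\overline{s_\pm(z,t)}=s_\mp(-z,t)$ (valid for real $\nu$ and $t$). This identity is exactly the property the paper invokes. Your bookkeeping also checks out: the folding of the $[-\pi,\pi]$ integral, the $-(P_a+P_b)$ contribution, and the final pairing $\tfrac{i}{2}(w_+-\overline{w_+})-\tfrac{i}{2}(w_-+\overline{w_-})=-\Im w_+-i\,\Re w_-$ are all right, and the $r_\pm$ conjugation check you mention is not needed because $\tilde P_a$ keeps the $z$ and $-z$ terms separate.
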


\begin{proof}
Substitute the integral representations given in Eqs. (\ref{DnJnuRez<0}) and (\ref{DnYnuRez<0}) into Eq. (\ref{DnH1nu=DnJnu+i*DnYnu}), taking into account the property:
\[
\overline{s_{\pm }\left( z,t\right) }=s_{\mp }\left( -z,t\right) .
\]
Simplify the result to complete the proof.
\end{proof}

\begin{corollary} \label{Hankel1_integral_Rez_negative}
For $\Re \left( z\right) <0$, the following integral representation holds:
\begin{eqnarray*}
&&H_{\nu }^{\left( 1\right) }\left( z\right) \\
&=&\frac{1}{\pi }\left\{ \int_{0}^{\pi }\Bigl[ \cos \left( \nu \left(\mathrm{a}\left( z\right) +t\right) +z\sin t\right) +i\sin \left( \nu \left(\mathrm{a}\left( z\right) -t\right) -z\sin t\right) \right. \\
&&\quad -\left. \frac{2\mathrm{a}\left( z\right) }{\pi }\cos \nu \pi \cos \left( \nu t+z\sin t\right) \right] dt \\
&&+\int_{0}^{\infty }\left[ e^{-\nu t}\left(\frac{\mathrm{a}\left( z\right)}{\pi }\sin 2\pi \nu -\sin \left( \nu \left( \mathrm{a}\left( z\right) +\pi\right) \right) -i\cos \left( \nu \left( \mathrm{a}\left( z\right) -\pi\right) \right) \right) \right. \\
&&\quad -\left. \left. i\,e^{\nu \left( t-i\,\mathrm{a}\left( z\right)\right) }\,e^{z\sinh t}\,\right] dt\right\} .
\end{eqnarray*}
\end{corollary}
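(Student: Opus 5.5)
The plan is to read off the corollary as the case $n=0$ of the preceding theorem, Eq. (\ref{DnH1Rez<0}), and then simplify the integrands into trigonometric and exponential form. Setting $n=0$ makes every power $r_\pm^n$, $s_\pm^n$, $(2\pi i-t)^n$, $(t-i\,\mathrm{a}(z))^n$, $(t\pm\pi)^n$ equal to $1$. The integrand on $[0,\pi]$ is then $\tilde P_a+\tilde P_b$ with only exponentials left, and the integrand on $[0,\infty)$ is $(\tilde Q_a+\tilde Q_b)\,e^{z\sinh t}$. Alternatively, one can start directly from $H^{(1)}_\nu(z)=J_\nu(z)+iY_\nu(z)$ and substitute the $n=0$ forms of Eqs. (\ref{DnJnuRez<0}) and (\ref{DnYnuRez<0}); this is the same computation.

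For the finite integral, I would insert $r_\pm(z,t)=i(\mathrm{a}(z)\pm t)$ and $r_\pm(-z,t)=i(\mathrm{a}(-z)\pm t)$, using $\mathrm{a}(-z)=-\mathrm{a}(z)$, which is immediate from Eq. (\ref{a(z)_def}). The exponentials in $\tilde P_a$ then have the form $e^{\pm i(\nu(\mathrm{a}(z)\pm t)\pm z\sin t)}$. The four terms pair into $\cos(\nu(\mathrm{a}(z)+t)+z\sin t)$ and $i\sin(\nu(\mathrm{a}(z)-t)-z\sin t)$. For $\tilde P_b=-P_b$ at $n=0$, I would apply $\cos(u-v)+\cos(u+v)=2\cos u\cos v$ with $u=\nu t+z\sin t$ and $v=\nu\pi$. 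This gives $-\tfrac{2\mathrm{a}(z)}{\pi}\cos\nu\pi\cos(\nu t+z\sin t)$, exactly as claimed.

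For the infinite integral, $\tilde Q_a$ at $n=0$ is $\tfrac{\mathrm{a}(z)}{\pi}e^{-\nu t}\sin 2\pi\nu-i\,e^{\nu(t-i\mathrm{a}(z))}$. For $\tilde Q_b$, I would use $s_\pm(z,t)=-t+i(\mathrm{a}(z)\pm\pi)$, so that
\[
-\Im\bigl[e^{\nu s_+}\bigr]-i\,\Re\bigl[e^{\nu s_-}\bigr]=-e^{-\nu t}\bigl[\sin(\nu(\mathrm{a}(z)+\pi))+i\cos(\nu(\mathrm{a}(z)-\pi))\bigr].
\]
Collecting these terms reproduces the bracket in the corollary. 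The factor $e^{z\sinh t}$ multiplies everything in the second integral, and it ensures convergence since $\Re(z)<0$; the displayed placement of $e^{z\sinh t}$ should be read in that way.

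The main obstacle is sign and bookkeeping. In particular, I need to check that the relative sign between the $r_+(z,t)$ and $r_+(-z,t)$ pieces in $\tilde P_a$ indeed produces a cosine and a sine rather than two sines. To be safe, I would verify this pairing by recomputing directly from $J_\nu(z)+iY_\nu(z)$ using Eqs. (\ref{Jnu_intermedio}) and (\ref{exp*Ynu}), together with Eq. (\ref{cos*Jnu}).
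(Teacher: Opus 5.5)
Your proposal is correct and is essentially the paper's proof, which simply sets $n=0$ in Eq.~(\ref{DnH1Rez<0}) and simplifies. Your pairings in $\tilde{P}_a$ check out: the $r_{+}(z,t)$ and $r_{-}(-z,t)$ exponentials give the cosine, and the $r_{-}(z,t)$ and $r_{+}(-z,t)$ exponentials give $i\sin$. The product-to-sum step for $\tilde{P}_b$ and your evaluation of $\tilde{Q}_a+\tilde{Q}_b$ are also right.

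You are also right that $e^{z\sinh t}$ must multiply the whole integrand of the second integral. As typeset it multiplies only the last term, which is a typo in the statement. Read literally, the formula diverges for $\nu\le 0$: the coefficient of $e^{-\nu t}$ is never zero, and reduces to $-i$ when $\mathrm{a}(z)=\pi$.
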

\begin{proof}
Take $n=0$ in Eq. (\ref{DnH1Rez<0}) and simplify the result.
\end{proof}

\begin{theorem}
For $\Im \left( z\right) >0$, the following integral representation for the $n$-th\ order derivative holds:
\[
\frac{\partial ^{n}H_{\nu }^{\left( 1\right) }\left( z\right) }{\partial \nu^{n}}=\frac{\left( -1\right) ^{n+1}i}{\pi }\int_{-\infty }^{+\infty }\left(t+i\frac{\pi }{2}\right) ^{n}e^{i\,z\cosh t-\nu \left( t+i\pi /2\right)}\,dt.
\]
\end{theorem}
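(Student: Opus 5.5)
The plan is to start from the integral representation of $H_{\nu }^{\left( 1\right) }\left( z\right)$ valid for $\Im \left( z\right) >0$, Eq. (\ref{H(1)_integral}), and merge the prefactor $e^{-i\pi \nu /2}$ into the integrand. Writing $1/i=-i$, we have
\[
H_{\nu }^{\left( 1\right) }\left( z\right) =-\frac{i}{\pi }\int_{-\infty }^{+\infty }e^{i\,z\cosh t-\nu \left( t+i\pi /2\right) }\,dt .
\]
The whole $\nu$-dependence is then concentrated in the single exponential factor $e^{-\nu \left( t+i\pi /2\right) }$.

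Next, differentiate $n$ times with respect to $\nu$ under the integral sign. Each derivative produces a factor $-\left( t+i\pi /2\right)$, so the integrand acquires
\[
\left( -1\right) ^{n}\left( t+i\frac{\pi }{2}\right) ^{n} .
\]
Combining this with the prefactor gives $-i\left( -1\right) ^{n}/\pi =\left( -1\right) ^{n+1}i/\pi$, which is exactly the stated constant. This is consistent with the earlier theorem for $Y_{\nu }\left( i\,x\right)$, where the same integral appeared after the identity $Y_{\nu }=i\left( J_{\nu }-H_{\nu }^{\left( 1\right) }\right)$.

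The only step needing justification is the interchange of differentiation and integration, and this is the (mild) obstacle. For $\Im \left( z\right) >0$ we have $\left\vert e^{i\,z\cosh t}\right\vert =e^{-\Im \left( z\right) \cosh t}$, which decays doubly exponentially as $\left\vert t\right\vert \rightarrow \infty$. For $\nu$ ranging over a compact real interval $\left[ \nu _{0}-\delta ,\nu _{0}+\delta \right]$, the differentiated integrand is dominated by
\[
\left( \left\vert t\right\vert +\pi /2\right) ^{n}e^{\left( \left\vert \nu _{0}\right\vert +\delta \right) \left\vert t\right\vert -\Im \left( z\right) \cosh t},
\]
which is integrable on $\mathbb{R}$. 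Dominated convergence, applied inductively in $n$, therefore legitimizes differentiating under the integral sign and completes the argument.
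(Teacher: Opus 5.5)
Your proposal is correct and follows the paper's proof exactly. You absorb $e^{-i\pi\nu/2}$ into the integrand of Eq.~(\ref{H(1)_integral}) and differentiate $n$ times under the integral sign, which produces the constant $(-1)^{n+1}i/\pi$. Your dominated-convergence bound, based on $|e^{iz\cosh t}|=e^{-\Im(z)\cosh t}$, supplies the justification for the interchange that the paper leaves implicit.
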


\begin{proof}
Take the $n$-th derivative with respect to the order in the integral representation given in Eq. (\ref{H(1)_integral}).
\end{proof}

\begin{theorem}
For $\Im \left( z\right) <0$, the following integral representation for the $n$-th\ order derivative holds:
\[
\frac{\partial ^{n}H_{\nu }^{\left( 1\right) }\left( z\right) }{\partial \nu^{n}}=2\,\frac{\partial ^{n}J_{\nu }\left( z\right) }{\partial \nu ^{n}}-\frac{i}{\pi }\int_{-\infty }^{+\infty }\left( i\,\frac{\pi }{2}-t\right)^{n}e^{-i\,z\cosh t+\nu \left( i\pi /2-t\right) }\,dt,
\]
where $\frac{\partial ^{n}J_{\nu }\left( z\right) }{\partial \nu ^{n}}$ can
be evaluated by the theorems of Section \ref{Section: BesselJ}.
\end{theorem}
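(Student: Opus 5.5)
The identity follows from the two Hankel definitions, Eqs. (\ref{H(1)(z)_def}) and (\ref{H(2)(z)_def}). Adding them gives $J_\nu = \tfrac12\bigl(H^{(1)}_\nu + H^{(2)}_\nu\bigr)$, so
\[
H^{(1)}_\nu(z) = 2J_\nu(z) - H^{(2)}_\nu(z).
\]
This holds for every $z$ in the cut plane. For $\Im(z)<0$ the second term can then be replaced by the integral representation (\ref{H(2)_integral}). The factor $e^{i\pi\nu/2}$ combines with $e^{-\nu t}$ in the integrand, and since $-1/(i\pi)=i/\pi$ we get
\[
H^{(1)}_\nu(z) = 2J_\nu(z) - \frac{i}{\pi}\int_{-\infty}^{+\infty} e^{-iz\cosh t + \nu(i\pi/2 - t)}\,dt .
\]

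Next, differentiate $n$ times with respect to $\nu$. Each derivative falls on the exponential and brings down a factor $(i\pi/2 - t)$, which gives exactly the integrand $(i\pi/2-t)^n e^{-iz\cosh t+\nu(i\pi/2-t)}$ in the statement. The term $2J_\nu(z)$ simply becomes $2\,\partial^n J_\nu(z)/\partial\nu^n$. That derivative is left to be evaluated by the theorems of Section \ref{Section: BesselJ}, whichever applies according to the sign of $\Re(z)$ or whether $z$ lies on the imaginary axis.

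The only point needing care is differentiating under the integral sign. For $\Im(z)<0$ we have
\[
|e^{-iz\cosh t}| = e^{\Im(z)\cosh t},
\]
which decays doubly exponentially as $|t|\to\infty$. This dominates $|t|^n e^{|\nu| |t|}$, locally uniformly in $\nu$. Hence every $\nu$-derivative of the integrand is dominated by an integrable function, and the standard dominated-convergence argument justifies the interchange. I expect no real obstacle beyond this routine check and the bookkeeping of the constant $-i/\pi$.
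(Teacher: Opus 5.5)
Your proof is correct, but it takes a shorter route than the paper's.

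\textbf{The paper's route.} The paper never uses $H^{(2)}_\nu$. It substitutes the reflection formulas Eq.~(\ref{Reflection_Jnu}) and Eq.~(\ref{Ynu(z)->Ynu(-z)}) into $H^{(1)}_\nu=J_\nu+iY_\nu$. It then specializes to $\mathrm{a}(z)=-\pi$ for $\Im(z)<0$ and obtains $H^{(1)}_\nu(z)=2J_\nu(z)+e^{i\pi\nu}H^{(1)}_\nu(-z)$. Finally it applies the $H^{(1)}$ representation Eq.~(\ref{H(1)_integral}) at the point $-z$, which lies in the upper half-plane.

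\textbf{Why both arrive at the same integral.} For $\Im(z)<0$ one has the continuation relation $H^{(2)}_\nu(z)=-e^{i\pi\nu}H^{(1)}_\nu(-z)$. In effect, the paper re-derives this relation from its reflection machinery. You get it for free by quoting Eq.~(\ref{H(2)_integral}), which the paper states in the introduction but never uses.

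\textbf{What your route buys.}
\begin{itemize}
\item It needs only $J_\nu=\tfrac{1}{2}\bigl(H^{(1)}_\nu+H^{(2)}_\nu\bigr)$ and a standard representation.
\item It avoids Eq.~(\ref{Reflection_Ynu_Jnu}), which the paper states only for $\nu\notin\mathbb{Z}$ and extends tacitly by continuity.
\item It includes the dominated-convergence justification for differentiating under the integral sign, which the paper omits.
\end{itemize}

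\textbf{What the paper's route buys.} It keeps everything tied to the single $H^{(1)}$ integral and the reflection formulas used throughout Section~2. It also yields the identity $H^{(1)}_\nu(z)=2J_\nu(z)+e^{i\pi\nu}H^{(1)}_\nu(-z)$ as a by-product.
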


\begin{proof}
Substitute the reflection formulas given in Eqs. (\ref{Reflection_Jnu})\ and (\ref{Ynu(z)->Ynu(-z)}) for $J_{\nu }\left( z\right) $ and $Y_{\nu }\left( z\right) $ respectively, and the definition of $H_{\nu }^{\left( 1\right)}\left( z\right) $ given in Eq. (\ref{H(1)(z)_def}), to obtain
\[
H_{\nu }^{\left( 1\right) }\left( z\right) =\left[ e^{i\nu \,\mathrm{a}\left( z\right) }-\frac{2}{\pi }\mathrm{a}\left( z\right) \cos \nu \pi \right] \,J_{\nu }\left( -z\right) +i\,e^{-i\nu \,\mathrm{a}\left( z\right)}\,Y_{\nu }\left( -z\right) .
\]
Note that $\mathrm{a}\left( z\right) =-\pi $ for $\Im \left( z\right) <0$.
Therefore, taking into account again the definition of $H_{\nu }^{\left(
1\right) }\left( z\right) $ in Eq. (\ref{H(1)(z)_def}), and the reflection formula Eq. (\ref{Reflection_Jnu}), we obtain for $\Im \left( z\right) <0$:
\[
H_{\nu }^{\left( 1\right) }\left( z\right) =2\,J_{\nu }\left( z\right)+e^{i\pi \nu }H_{\nu }^{\left( 1\right) }\left( -z\right) .
\]
Applying the integral representation given in Eq. (\ref{H(1)_integral}), we obtain
\[
H_{\nu }^{\left( 1\right) }\left( z\right) =2\,J_{\nu }\left( z\right) - \frac{i}{\pi }\int_{-\infty }^{+\infty }e^{-i\,z\cosh t+\nu \left( i\pi/2-t\right) }\,dt.
\]
Finally, take the the $n$-th derivative with respect to the order to complete the proof.
\end{proof}

\begin{theorem}
For $z=0$, the $n$-th\ order derivative of the Hankel function of the first kind is divergent.
\end{theorem}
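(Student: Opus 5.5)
We will argue exactly as in the proofs of the theorems on the $z=0$ behaviour of $J_\nu$ and $Y_\nu$ (Theorems \ref{Theorem: DnBesselJ_z=0} and \ref{Theorem: DnYnu(0)}). By Eq. (\ref{DnH1nu=DnJnu+i*DnYnu}),
\[
\frac{\partial ^{n}H_{\nu }^{\left( 1\right) }\left( z\right) }{\partial \nu^{n}}=\frac{\partial ^{n}J_{\nu }\left( z\right) }{\partial \nu ^{n}}+i\,\frac{\partial ^{n}Y_{\nu }\left( z\right) }{\partial \nu ^{n}},
\]
so the task reduces to showing that the divergent contribution of the $Y_\nu$ part cannot be cancelled by the $J_\nu$ part as $z\rightarrow 0$.

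We split into cases according to $\nu$, using the small-$z$ asymptotics quoted in the proof of Theorem \ref{Theorem: DnYnu(0)} and Eq. (\ref{J_nu(z)_z->0}).

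\textbf{Case $\nu>0$.} Here $\partial^n J_\nu/\partial\nu^n\rightarrow 0$ by Theorem \ref{Theorem: DnBesselJ_z=0}. The $Y_\nu$ term behaves like $-\frac{1}{\pi}\partial_\nu^n[\Gamma(\nu)(z/2)^{-\nu}]$, whose leading part is $-\frac{1}{\pi}\Gamma(\nu)(-\log(z/2))^n(z/2)^{-\nu}$. This grows without bound, so the sum diverges.

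\textbf{Case $\nu=0$.} The $J$ term behaves like $\log^n(z/2)/\Gamma(1+\nu)$, and so does the $Y$ term after differentiating $Y_\nu$ near $\nu=0$. To avoid any cancellation, we instead use the local expansion of $H^{(1)}_\nu$ itself. For $\nu\notin\mathbb{Z}$, by Eq. (\ref{Ynu_def}),
\[
H_{\nu }^{\left( 1\right) }\left( z\right) =\frac{J_{-\nu }\left( z\right)-e^{-i\pi \nu }J_{\nu }\left( z\right) }{i\sin \nu \pi }.
\]
Near $z=0$, the $J_{-\nu}$ piece has leading behaviour $(z/2)^{-\nu}/(i\sin(\nu\pi)\,\Gamma(1-\nu))$. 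Its $n$-th $\nu$-derivative is dominated by the term $(-\log(z/2))^n(z/2)^{-\nu}$ times a nonzero coefficient. The $J_\nu$ piece carries $(z/2)^{\nu}$ instead, which is subdominant for $\nu>0$; for $\nu<0$ the two pieces swap roles. Either way the two pieces grow at different rates, so no cancellation can occur.

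\textbf{Case $\nu<0$.} The same reasoning applies with the roles of the two pieces exchanged. The asymptotic $Y_\nu\approx-\frac{1}{\pi}\Gamma(-\nu)\cos(\nu\pi)(z/2)^{-\nu}$ does not blow up, but $J_\nu\sim(z/2)^{\nu}/\Gamma(1+\nu)$ does, and $Y_\nu$ also contains the companion term $-J_{-\nu}/\sin\nu\pi$-type contribution of size $(z/2)^{\nu}$.

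\textbf{Main obstacle.} The delicate part is the integer orders, including $\nu=0$, where the representation above has removable singularities and $\log$ terms appear; this is also where cancellation between the $J$ and $iY$ parts would occur if it occurred at all. We would handle these orders as in Theorem \ref{Theorem: DnBesselJ_z=0}: set $\nu=m+\epsilon$, expand in $\epsilon$, and track the most singular power of $\log(z/2)$. For $\nu=0$ this is easy: $H_0^{(1)}\approx\frac{2i}{\pi}\log(z/2)$, so the leading log terms of $J$ and $iY$ carry different factors and do not cancel. For general $m$ we would use the $\epsilon$-expansion to check that the leading term multiplying $(z/2)^{-|m|}\log^{k}(z/2)$ has a nonzero coefficient, which would establish divergence in every case.
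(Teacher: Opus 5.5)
Your cases $\nu>0$ and non-integer $\nu<0$ (via $H^{(1)}_\nu=(J_{-\nu}-e^{-i\pi\nu}J_\nu)/(i\sin\nu\pi)$) are sound. Your worry about cancellation is also well founded. The paper's proof only combines $\partial^n H^{(1)}=\partial^n J+i\,\partial^n Y$ with the divergence of $\partial^n Y$, which does not settle $\nu\le 0$. However, you never prove the case $\nu=0$, $n\ge 1$, which is exactly where the difficulty lies.

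\begin{itemize}
\item The paragraph labelled ``Case $\nu=0$'' argues that the two pieces grow at different rates $(z/2)^{\mp\nu}$. At $\nu=0$ these rates coincide, $1/\sin\nu\pi$ has a pole, and neither piece has $\nu$-derivatives. That argument therefore only covers non-integer $\nu\neq 0$.
\item The fallback $H^{(1)}_0(z)\approx\frac{2i}{\pi}\log(z/2)$ covers only $n=0$.
\item Your remark that $\partial^n_\nu Y_\nu|_{\nu=0}$ behaves like $\log^n(z/2)$ is wrong. Differentiating $Y_{-\nu}=\cos(\nu\pi)Y_\nu+\sin(\nu\pi)J_\nu$ at $\nu=0$ gives $\partial_\nu Y_\nu|_{\nu=0}=-\frac{\pi}{2}J_0(z)$, which is bounded. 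So for $n=1$ the divergence of $\partial_\nu H^{(1)}_\nu|_{\nu=0}=-\frac{i\pi}{2}H^{(1)}_0(z)$ comes entirely from the $J$-part. This also shows that no argument resting on the divergence of $\partial^n Y_\nu$ alone can work at $\nu=0$.
\item The check you defer, that ``the top power of $\log(z/2)$ has a nonzero coefficient'', fails as stated. For odd $n$ the coefficient of $\log^{n+1}(z/2)$ in $\partial^n_\nu H^{(1)}_\nu|_{\nu=0}$ vanishes identically, so you must compute the next order.
\end{itemize}

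One way to close the gap uses $H^{(1)}_{-\nu}=e^{i\nu\pi}H^{(1)}_\nu$. It makes $G(\nu):=e^{i\nu\pi/2}H^{(1)}_\nu(z)$ even in $\nu$, and $\partial^{2j}_\nu G|_{\nu=0}\sim\frac{2i}{\pi}\frac{\log^{2j+1}(z/2)}{2j+1}$. For instance, along $z=iy$, $y\to0^+$, one has $G=\frac{2}{\pi i}K_\nu(y)$ and $\partial^{2j}_\nu G|_{\nu=0}=\frac{2}{\pi i}\int_0^\infty t^{2j}e^{-y\cosh t}\,dt$, which grows like $\log^{2j+1}(2/y)/(2j+1)$. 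Apply Leibniz' rule to $H^{(1)}_\nu=e^{-i\nu\pi/2}G(\nu)$; only even-order derivatives of $G$ survive. For even $n$ this gives $\partial^n_\nu H^{(1)}_\nu|_{\nu=0}\sim\frac{2i}{\pi}\frac{\log^{n+1}(z/2)}{n+1}$. For odd $n$ it gives $\sim\log^{n}(z/2)$, coming from the $k=n-1$ term with factor $n(-i\pi/2)$. Both diverge.

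The same identity, written as $H^{(1)}_\nu=e^{-i\nu\pi}H^{(1)}_{-\nu}$, together with Leibniz, reduces negative integer orders to your $\nu>0$ case, where the $k=n$ term dominates.

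A minor point: in your Case $\nu<0$, the quoted asymptotic of $Y_\nu$ should carry $(z/2)^{\nu}$, not $(z/2)^{-\nu}$ (this follows from $Y_{-\mu}=\cos(\mu\pi)Y_\mu+\sin(\mu\pi)J_\mu$). So $Y_\nu$ does blow up there unless $\cos\nu\pi=0$. This does not damage your argument, which rests on the $J_{\pm\nu}$ representation.
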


\begin{proof}
The result follows straightforwardly from (\ref{DnH1nu=DnJnu+i*DnYnu}) and Theorem \ref{Theorem: DnYnu(0)}.
\end{proof}

\subsection{Hankel function of the second kind}

\begin{theorem}
For $z \notin \left(-\infty, 0 \right] $, the following formula for the $n$-th\ order derivative holds:
\[
\frac{\partial ^{n}H_{\nu }^{\left( 2\right) }\left( z\right) }{\partial \nu^{n}}=\overline{\frac{\partial ^{n}H_{\nu }^{\left( 1\right) }\left( \bar{z}\right) }{\partial \nu ^{n}}}.
\]
\end{theorem}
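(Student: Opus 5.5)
The plan is to prove the identity at the level of the functions themselves and then differentiate with respect to $\nu$, exactly as in the corollaries for $J_\nu$, $Y_\nu$, $I_\nu$ and $K_\nu$ at $-i\,x$. The key fact is the conjugation symmetry
\[
H_{\nu }^{\left( 2\right) }\left( z\right) =\overline{H_{\nu }^{\left( 1\right) }\left( \bar{z}\right) },\qquad z\notin \left( -\infty ,0\right] ,\ \nu \in \mathbb{R}.
\]
This follows from the definitions (\ref{H(1)(z)_def})--(\ref{H(2)(z)_def}) together with the properties (\ref{Jnu(conj(z))}) and (\ref{Ynu(conj(z))}). Note that $\bar z\notin(-\infty,0]$ whenever $z\notin(-\infty,0]$, so those properties apply at $\bar z$. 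For real $\nu$ we then have
\[
\overline{H_{\nu }^{\left( 1\right) }\left( \bar{z}\right) }=\overline{J_{\nu }\left( \bar{z}\right) }-i\,\overline{Y_{\nu }\left( \bar{z}\right) }=J_{\nu }\left( z\right) -i\,Y_{\nu }\left( z\right) =H_{\nu }^{\left( 2\right) }\left( z\right) .
\]

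Next I would differentiate $n$ times with respect to $\nu$. Since $\nu$ is real, complex conjugation commutes with $\partial/\partial\nu$. Indeed, for a function $f(\nu)$ of a real variable, $\overline{f}$ has real and imaginary parts $\Re f$ and $-\Im f$, and each of these is differentiated separately. Applying this $n$ times to $f(\nu)=H^{(1)}_\nu(\bar z)$ gives the stated formula. The derivatives exist because $H^{(1)}_\nu(\bar z)$ is smooth (in fact entire) in $\nu$ for fixed $\bar z\notin(-\infty,0]$; this is guaranteed by the integral representations of the previous subsection.

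The only point needing care is the domain restriction. The conjugation symmetry of $J_\nu$ and $Y_\nu$ with principal branches fails on the cut $(-\infty,0]$, since there $\arg\bar z=\arg z=\pi$ rather than $-\pi$. Hence the statement is restricted to $z\notin(-\infty,0]$, and I would remark that $z=0$ is excluded anyway by the divergence result already proved for $H^{(1)}_\nu$. Beyond this, I do not expect any real obstacle; the proof is a two-line consequence of the earlier properties.
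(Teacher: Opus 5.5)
Your proposal is correct and follows the paper's route: it establishes $H_{\nu}^{(2)}(z)=\overline{H_{\nu}^{(1)}(\bar z)}$ for $z\notin(-\infty,0]$ and then differentiates $n$ times in the real variable $\nu$, which commutes with complex conjugation. The only difference is that the paper cites this conjugation identity from Magnus--Oberhettinger--Soni, whereas you derive it from the definitions of $H_{\nu}^{(1)},H_{\nu}^{(2)}$ and the conjugation properties of $J_\nu$ and $Y_\nu$, which is a harmless, slightly more self-contained variant.
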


\begin{proof}
Using the following result for $z \notin \left(-\infty, 0 \right] $ \cite[Sect. 3.1]{Oberhettinger}:
\begin{equation}
H_{\nu }^{\left( 2\right) }\left( z\right) =\overline{H_{\nu }^{\left(1\right) }\left( \bar{z}\right) }, \label{Hankel_Conjugate}
\end{equation}
we take the $n$-th derivative with respect to the order to complete the proof.
\end{proof}

\begin{theorem}
For $z=0$, the $n$-th\ order derivative of the Hankel function of the second kind is divergent.
\end{theorem}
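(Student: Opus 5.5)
The plan is to imitate the proof of the analogous theorem for the Hankel function of the first kind. From the definition of $H_{\nu }^{\left( 2\right) }$ in Eq. (\ref{H(2)(z)_def}), differentiating $n$ times with respect to the order gives
\[
\frac{\partial ^{n}H_{\nu }^{\left( 2\right) }\left( z\right) }{\partial \nu ^{n}}=\frac{\partial ^{n}J_{\nu }\left( z\right) }{\partial \nu ^{n}}-i\,\frac{\partial ^{n}Y_{\nu }\left( z\right) }{\partial \nu ^{n}}.
\]
This holds for $z$ in a punctured neighbourhood of the origin, away from the branch cut if necessary. The question is then reduced to the small-$z$ behaviour of the two terms on the right.

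By Theorem \ref{Theorem: DnYnu(0)}, the term $\partial ^{n}Y_{\nu }/\partial \nu ^{n}$ diverges as $z\rightarrow 0$ for every real $\nu$. For $\nu \leq 0$, Theorem \ref{Theorem: DnBesselJ_z=0} says the $J$ term diverges too. For $\nu >0$, the same theorem says it tends to $0$, so the divergence of the $Y$ term alone gives the result.

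The main obstacle is to exclude cancellation between the two divergent terms when $\nu \leq 0$. I would compare their leading asymptotic terms, as listed in the proofs of Theorems \ref{Theorem: DnBesselJ_z=0} and \ref{Theorem: DnYnu(0)}.

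For $\nu <0$ non-integer, the $Y$ term carries the factor $\Gamma (-\nu )\cos \nu \pi \,(z/2)^{-\nu }$. This is not the dominant piece, however. Since $Y_{\nu }=(J_{\nu }\cos \nu \pi -J_{-\nu })/\sin \nu \pi$, the $Y$ term also contains $J_{\nu }\cot \nu \pi$, which behaves like $(z/2)^{\nu }$. So I would instead use the cleaner identity
\[
H_{\nu }^{\left( 2\right) }=\frac{J_{-\nu }-e^{i\nu \pi }J_{\nu }}{-i\sin \nu \pi }.
\]
For $\nu <0$, $J_{\nu }$ dominates, with leading term $(z/2)^{\nu }/\Gamma (1+\nu )$, and $J_{-\nu }\rightarrow 0$. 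Hence
\[
H_{\nu }^{\left( 2\right) }\approx \frac{e^{i\nu \pi }}{i\sin \nu \pi }\frac{(z/2)^{\nu }}{\Gamma (1+\nu )}.
\]
Its $n$-th $\nu$-derivative is dominated by the term with $\log ^{n}(z/2)\,(z/2)^{\nu }$, whose coefficient is nonzero. That term is unbounded as $z\rightarrow 0$.

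For $\nu >0$ the same identity shows that $J_{-\nu }$ dominates, giving a factor $(z/2)^{-\nu }$ with nonzero coefficient, and hence divergence.

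At $\nu =0$ and at integer $\nu$, where $\sin \nu \pi =0$, I would argue by continuity in $\nu$ or through the logarithmic asymptotics $Y_{0}\approx \frac{2}{\pi }\log (z/2)$. A single logarithm cannot be cancelled by the bounded, or purely power-type, $J$ contribution. Alternatively, I would use $H_{\nu }^{(2)}(z)=\overline{H_{\nu }^{(1)}(\bar{z})}$ from the preceding theorem, together with the divergence already established for the Hankel function of the first kind, which handles all cases at once since conjugation preserves unboundedness. That is the route I would present as the main proof, with the $J$–$Y$ decomposition as backup.
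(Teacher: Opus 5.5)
Your designated main route, $H^{(2)}_\nu(z)=\overline{H^{(1)}_\nu(\bar z)}$ plus the $z=0$ result for $H^{(1)}_\nu$, is a valid deduction from an earlier theorem. In substance it is the paper's own proof. The paper argues in one line from $H^{(2)}_\nu=J_\nu-iY_\nu$ and Theorem~\ref{Theorem: DnYnu(0)}, and it proves the $H^{(1)}_\nu$ case by the identical one-liner with $J_\nu+iY_\nu$. So conjugation does not resolve the cancellation issue you rightly flagged; it only moves it into the $H^{(1)}_\nu$ theorem. Your analysis for non-integer $\nu$ via $H^{(2)}_\nu=(J_{-\nu}-e^{i\nu\pi}J_\nu)/(-i\sin\nu\pi)$ is correct. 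Its leading coefficients, $i\Gamma(\nu)/\pi$ for $\nu>0$ and $-ie^{i\nu\pi}/(\sin(\nu\pi)\,\Gamma(1+\nu))$ for $\nu<0$, are nonzero, and this part is more careful than the paper.

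The genuine gap is your treatment of $\nu\in\{0,-1,-2,\ldots\}$.

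\begin{itemize}
\item ``Continuity in $\nu$'' cannot carry blow-up at $z=0$ from nearby orders to an integer order. $\partial_\nu Y_\nu(z)$ is unbounded as $z\to0$ for every fixed $\nu\in(0,1)$. Yet differentiating $Y_{-\nu}=\cos(\nu\pi)Y_\nu+\sin(\nu\pi)J_\nu$ at $\nu=0$ gives $\partial_\nu Y_\nu(z)|_{\nu=0}=-\frac{\pi}{2}J_0(z)\to-\frac{\pi}{2}$.
\item The same identity shows that Theorem~\ref{Theorem: DnYnu(0)}, which you (and the paper) invoke ``for every real $\nu$'', is false at $(\nu,n)=(0,1)$. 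It also fails at $(\nu,n)=(-\tfrac12,0)$, where $Y_{-1/2}=J_{1/2}\to0$.
\item Your fallback at $\nu=0$ therefore has the roles reversed. For $n\ge1$ the $J$-term is neither bounded nor power-type, since $\partial^n_\nu J_\nu|_{\nu=0}\sim\log^n(z/2)$. For $n=1$ it is the only divergent piece: $\partial_\nu H^{(2)}_\nu|_{\nu=0}=\frac{\pi}{2}Y_0(z)+\frac{i\pi}{2}J_0(z)=\frac{i\pi}{2}H^{(2)}_0(z)$.
\end{itemize}

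The statement itself is still true, and a cheap repair removes the cancellation question altogether. Let $z=x\to0^{+}$. For real $x>0$ and real $\nu$, both $\partial^n_\nu J_\nu(x)$ and $\partial^n_\nu Y_\nu(x)$ are real. Hence $|\partial^n_\nu H^{(2)}_\nu(x)|\ge\max\bigl(|\partial^n_\nu J_\nu(x)|,|\partial^n_\nu Y_\nu(x)|\bigr)$, and it suffices to check that, for each $(\nu,n)$, at least one of the two diverges:
\begin{itemize}
\item For $\nu>0$, the $Y$-term diverges.
\item For non-integer $\nu<0$, the $J$-term diverges.
\item For $\nu\in\{0,-1,-2,\ldots\}$ with $n=0$, the $Y$-term diverges.
\item For $\nu\in\{0,-1,-2,\ldots\}$ with $n\ge1$, the $J$-term diverges. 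At $\nu=0$ it behaves like $\log^n(x/2)$; at $\nu=-m$ it behaves like $n(-1)^{m-1}(m-1)!\,\log^{n-1}(x/2)\,(x/2)^{-m}$.
\end{itemize}
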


\begin{proof}
The result follows straightforwardly from Eq. (\ref{H(2)(z)_def})\ and Theorem \ref{Theorem: DnYnu(0)}.
\end{proof}

\begin{theorem}
For $x<0$, the following formula for the $n$-th\ order derivative holds:
\begin{equation*}
\frac{\partial ^{n}H_{\nu }^{\left( 2\right) }\left( x\right) }{\partial \nu
^{n}}=\frac{\partial ^{n}J_{\nu }\left( x\right) }{\partial \nu ^{n}}-i\,%
\frac{\partial ^{n}Y_{\nu }\left( x\right) }{\partial \nu ^{n}},
\end{equation*}
where $\frac{\partial ^{n}J_{\nu }\left( x\right) }{\partial \nu ^{n}}$ is
calculated using Eq. (\ref{DnJnuRez<0}) and $\frac{\partial ^{n}Y_{\nu }\left(
x\right) }{\partial \nu ^{n}}$ using Eq.(\ref{DnYnuRez<0}).
\end{theorem}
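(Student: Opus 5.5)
The plan is to start from the definition of the Hankel function of the second kind in Eq. (\ref{H(2)(z)_def}), $H_{\nu }^{\left( 2\right) }\left( x\right) =J_{\nu }\left( x\right) -i\,Y_{\nu }\left( x\right)$. This identity holds for every $x$ in the common domain of $J_{\nu }$ and $Y_{\nu }$, and in particular on the negative real axis, where both functions are defined through the principal branch with $\arg x=\pi$. The point of the theorem is that the conjugation formula of the preceding theorem, based on Eq. (\ref{Hankel_Conjugate}), is unavailable for $x\in \left( -\infty ,0\right)$. There the map $z\mapsto \bar{z}$ does not respect the branch cut, so the cut line must be treated directly.

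Since $\nu $ is real and the relation is linear with constant coefficients $1$ and $-i$, differentiating $n$ times with respect to $\nu $ gives
\[
\frac{\partial ^{n}H_{\nu }^{\left( 2\right) }\left( x\right) }{\partial \nu ^{n}}=\frac{\partial ^{n}J_{\nu }\left( x\right) }{\partial \nu ^{n}}-i\,\frac{\partial ^{n}Y_{\nu }\left( x\right) }{\partial \nu ^{n}}.
\]
This is legitimate because $J_{\nu }\left( x\right) $ and $Y_{\nu }\left( x\right) $ are smooth (indeed entire) in $\nu $ for fixed $x\neq 0$. For $Y_{\nu }$ this includes integer orders, where it is defined as a limit that is analytic in $\nu $.

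It then remains to justify evaluating each term by the stated formulas. A real $x<0$ satisfies $\Re \left( x\right) <0$, so Eq. (\ref{DnJnuRez<0}) applies to the first term and Eq. (\ref{DnYnuRez<0}) to the second. Here $\mathrm{a}\left( x\right) =\arg x-\arg \left( -x\right) =\pi -0=\pi $.

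The only delicate step is that Eq. (\ref{DnYnuRez<0}) was derived from the reflection formula Eq. (\ref{Reflection_Ynu_Jnu}) under $\nu \notin \mathbb{Z}$. I would handle integer $\nu $ by continuity in $\nu $. Both sides of Eq. (\ref{DnYnuRez<0}) are continuous in $\nu $: the integrands converge uniformly, since for $x<0$ the factor $e^{x\sinh t}$ decays double-exponentially and dominates the exponential growth in $t$. Passing to the limit $\nu \rightarrow m\in \mathbb{Z}$ therefore extends the formula, and with it the stated theorem, to all real $\nu $.
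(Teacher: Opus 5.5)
Your proposal is correct and matches the paper's proof, which consists solely of differentiating the definition Eq. (\ref{H(2)(z)_def}) $n$ times with respect to $\nu$. Your additional checks are sound extra rigor that the paper omits: that $x<0$ lies in the region $\Re(x)<0$ with $\mathrm{a}(x)=\pi$, and that Eq. (\ref{DnYnuRez<0}) extends to exceptional orders by continuity in $\nu$, an argument that also covers the half-integers, where the factor $\cos\nu\pi$ in Eq. (\ref{Reflection_Ynu_Jnu}) vanishes.
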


\begin{proof}
The result follows straightforwardly taking the $n$-th derivative w.r.t the order in Eq. (\ref{H(2)(z)_def}).
\end{proof}

\section{Numerical results \label{Section: Numerical results}}

In Figure \ref{Figure: Algorithm_Cnu}, we present the algorithm for the computation of the $n$-th derivative with respect to the order of any of the functions $J_{\nu }\left( z\right) $, $Y_{\nu }\left( z\right) $, $I_{\nu }\left( z\right) $ or $K_{\nu }\left( z\right) $.

\begin{figure}[htbp]
\centering
\includegraphics[width=0.9\textwidth]{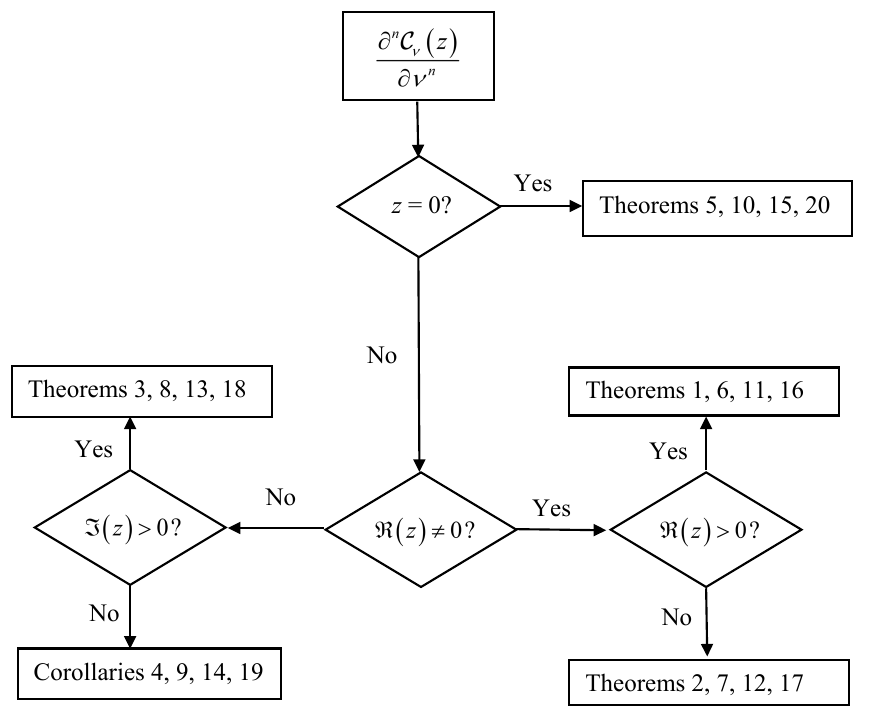}
\caption{Algorithm for the computation of the $n$-th order derivative of any of the functions $J_{\nu }\left( z\right) $, $Y_{\nu }\left( z\right) $, $I_{\nu }\left( z\right) $ or $K_{\nu }\left( z\right) $, generically denoted as $C_{\nu }\left( z\right) $.}
\label{Figure: Algorithm_Cnu}
\end{figure}

In Figure \ref{Figure: Algorithm_Hankel}, we present the algorithm for the computation of the $n$-th derivative with respect to the order of the Hankel functions $H_{\nu }^{\left( 1\right) }\left( z\right) $, $H_{\nu }^{\left(2\right) }\left( z\right) $, denoted as $H_{\nu }^{\left( k\right) }\left(
z\right) $.

\begin{figure}[htbp]
\centering
\includegraphics[width=0.95\textwidth]{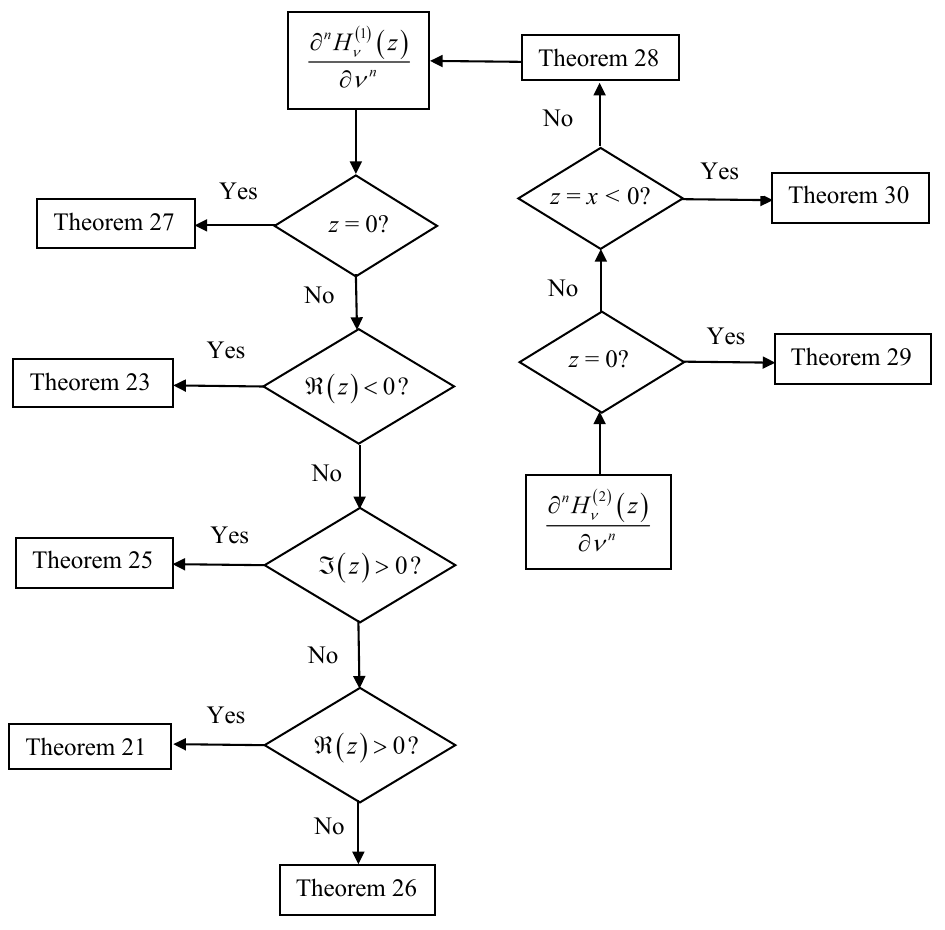}
\caption{Algorithm for the computation of the $n$-th order derivative of the Hankel functions.}
\label{Figure: Algorithm_Hankel}
\end{figure}

Note that the algorithms presented in Figures \ref{Figure: Algorithm_Cnu} and \ref{Figure: Algorithm_Hankel} prioritize the most simple integral representations for computational purposes.

As an example of these methods, Figure \ref{Figure: BesselJDerivatives} shows the first $6$ derivatives with respect to order of $J_\nu(x)$.

\begin{figure}[htbp]
\centering
\includegraphics[width=0.95\textwidth]{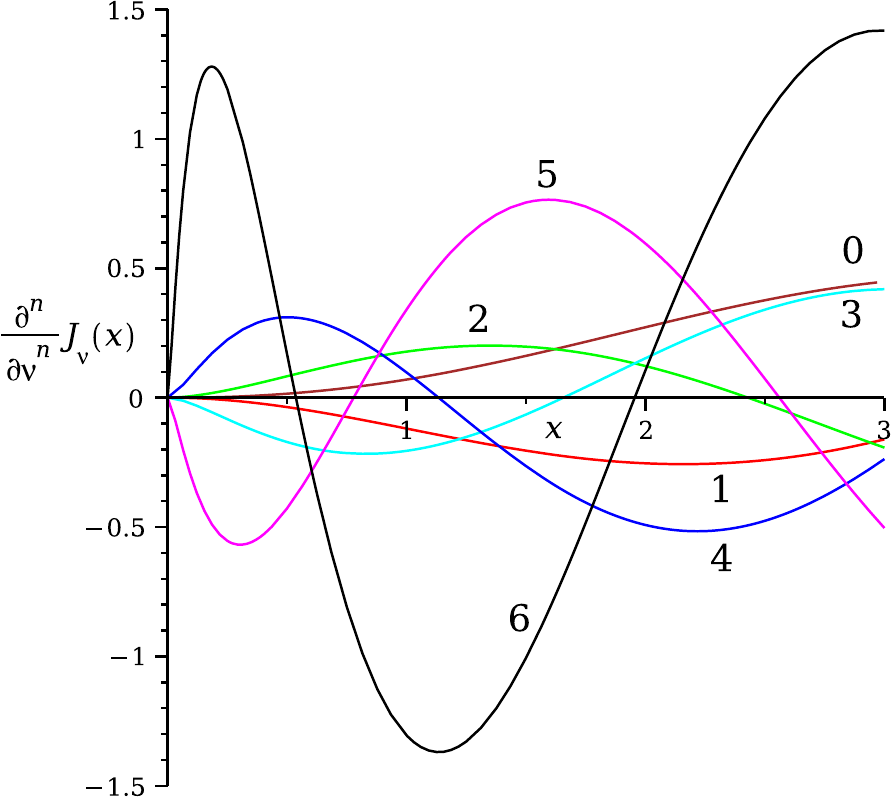}
\caption{$n$-th derivatives of $J_\nu(z)$ with respect to order evaluated at $\nu = 2.3$. Labels on curves are the value of $n$}
\label{Figure: BesselJDerivatives}
\end{figure}

The integral representations were implemented in both Mathematica and Maple. Infinite integrals were numerically evaluated by the double-exponential method \cite{Takahasi1973}, which is a built-in procedure in both languages. Finite integrals were evaluated with the same method in Maple but the default method in Mathematica.

Sample calculations at 30 digit precision will be used to illustrate the capabilities of the method. Further speed optimization may be possible but for simplicity the choice of the double-exponential method was the only non-default setting.

Results were very accurate for moderate values of $\nu$, $z$, and $n$. To quantify the accuracy, the relative error, $r$, of a function evaluated at 30 digits precision, $f_{30}$, was estimated as
\[
r = |(f_{30}-f_{40})/f_{40}|.
\]
For example, the 5th derivatives of $J_3(z),Y_3(z),I_3(z),K_3(3)$, $H_3^{(1)}(z)$ and $H_3^{(2)}(z)$ for $z \in \{\pm 2, \pm 2i, \pm 2 \pm 2i\}$, had relative errors between $5\times 10^{-31}$ and $1 \times 10^{-28}$ (between $2\times 10^{-32}$ and $4 \times 10^{-30}$ in Mathematica). These were evaluated in an average time of $1.3~ \mathrm{s}$. The average time in Mathematica was much faster, only $0.03~ \mathrm{s}$. Timings vary widely depending on the values of the parameters, reflecting the different types of integrands and integrals being evaluated.

For comparison, numerical differentiation had relative errors as high as $5 \times 10^{-23}$, even for this low $n$ value.

It is a premise of this paper that integral representations allow high-order derivatives to be easily evaluated. For example, the 100th derivatives of all six functions at $\nu=3$, $z=2$  were calculated without degraded accuracy in an average time of $2.8~ \mathrm{s}$ (Maple) or $0.1~ \mathrm{s}$ (Mathematica). The computation time did not vary significantly with $n$ until an approximate threshold value, after which computation times increased steeply or accuracy degraded. The increased time is related to the difficulty of achieving the default accuracy at the specified number of digits, and could generally be reduced by calculating at a higher number of digits.

The effects of the different integrands and integrals on the relative errors is more apparent at higher $n$, as seen in Tables \ref{Table: MMa relative errors} and \ref{Table: Maple relative errors} for $n=50$. The errors here at 30 digits precision vary widely, but in all cases the relative errors are reduced to less that $10^{-30}$ by moving to 50 digits precision.

\begin{table}[h] 
\centering
\resizebox{\textwidth}{!}{
\begin{tabular}{ccccccc}
$z$ & $\left. \frac{\partial ^{50}J_{\nu }(z)}{\partial \nu ^{50}}\right\vert _{\nu =3}$ & $\left. \frac{\partial ^{50}Y_{\nu }(z)}{\partial\nu ^{50}}\right\vert _{\nu =3}$ & $\left. \frac{\partial ^{50}I_{\nu }(z)}{\partial \nu ^{50}}\right\vert _{\nu =3}$ & $\left. \frac{\partial^{50}K_{\nu }(z)}{\partial \nu ^{50}}\right\vert _{\nu =3}$ & $\left. \frac{\partial ^{50}H_{\nu }^{(1)}(z)}{\partial \nu ^{50}}\right\vert _{\nu =3}$ & $\left. \frac{\partial ^{50}H_{\nu }^{(2)}(z)}{\partial \nu ^{50}}\right\vert _{\nu =3}$ \\ \hline
$2$ & $1\times 10^{-27}$ & $2\times 10^{-27}$ & $2\times 10^{-28}$ & $1\times 10^{-36}$ & $8\times 10^{-28}$ & $8\times 10^{-28}$ \\
$2+2i$ & $2\times 10^{-29}$ & $3\times 10^{-29}$ & $6\times 10^{-30}$ & $2\times 10^{-33}$ & $3\times 10^{-31}$ & $2\times 10^{-29}$ \\
$2-2i$ & $2\times 10^{-29}$ & $3\times 10^{-29}$ & $6\times 10^{-30}$ & $2\times 10^{-33}$ & $2\times 10^{-29}$ & $3\times 10^{-34}$ \\
$-2$ & $1\times 10^{-25}$ & $1\times 10^{-25}$ & $2\times 10^{-26}$ & $1\times 10^{-32}$ & $2\times 10^{-22}$ & $1\times 10^{-25}$ \\
$-2+2i$ & $1\times 10^{-21}$ & $1\times 10^{-21}$ & $1\times 10^{-22}$ & $7\times 10^{-27}$ & $9\times 10^{-17}$ & $1\times 10^{-21}$ \\
$-2-2i$ & $1\times 10^{-21}$ & $1\times 10^{-21}$ & $1\times 10^{-22}$ & $7\times 10^{-27}$ & $1\times 10^{-21}$ & $9\times 10^{-17}$ \\
$2i$ & $2\times 10^{-26}$ & $2\times 10^{-26}$ & $2\times 10^{-25}$ & $9\times 10^{-38}$ & $9\times 10^{-38}$ & $2\times 10^{-26}$ \\
$-2i$ & $2\times 10^{-26}$ & $1\times 10^{-26}$ & $2\times 10^{-25}$ & $9\times 10^{-38}$ & $9\times 10^{-26}$ & $9\times 10^{-38}$
\end{tabular}
}
\caption{Mathematica relative errors for 50-th derivatives of Bessel and Hankel functions at $\nu=3$ and selected $z$ values. Calculations at 30 digit precision.}\label{Table: MMa relative errors}
\end{table}

\begin{table}[h]
\centering
\resizebox{\textwidth}{!}{
\begin{tabular}{ccccccc}
$z$ & $\left. \frac{\partial ^{50}J_{\nu }(z)}{\partial \nu ^{50}}\right\vert _{\nu =3}$ & $\left. \frac{\partial ^{50}Y_{\nu }(z)}{\partial\nu ^{50}}\right\vert _{\nu =3}$ & $\left. \frac{\partial ^{50}I_{\nu }(z)}{\partial \nu ^{50}}\right\vert _{\nu =3}$ & $\left. \frac{\partial^{50}K_{\nu }(z)}{\partial \nu ^{50}}\right\vert _{\nu =3}$ & $\left. \frac{\partial ^{50}H_{\nu }^{(1)}(z)}{\partial \nu ^{50}}\right\vert _{\nu =3}$ & $\left. \frac{\partial ^{50}H_{\nu }^{(2)}(z)}{\partial \nu ^{50}}\right\vert _{\nu =3}$ \\ \hline
$2$ & $1\times 10^{-26}$ & $2\times 10^{-26}$ & $7\times 10^{-28}$ & $3\times 10^{-30}$ & $1\times 10^{-26}$ & $1\times 10^{-26}$ \\
$2+2i$ & $6\times 10^{-29}$ & $3\times 10^{-30}$ & $9\times 10^{-30}$ & $5\times 10^{-31}$ & $2\times 10^{-27}$ & $2\times 10^{-27}$ \\
$2-2i$ & $6\times 10^{-29}$ & $3\times 10^{-30}$ & $9\times 10^{-30}$ & $5\times 10^{-31}$ & $4\times 10^{-29}$ & $2\times 10^{-27}$ \\
$-2$ & $1\times 10^{-24}$ & $2\times 10^{-24}$ & $7\times 10^{-25}$ & $2\times 10^{-30}$ & $4\times 10^{-18}$ & $8\times 10^{-25}$ \\
$-2+2i$ & $1\times 10^{-21}$ & $8\times 10^{-21}$ & $2\times 10^{-21}$ & $4\times 10^{-26}$ & $8\times 10^{-13}$ & $4\times 10^{-21}$ \\
$-2-2i$ & $1\times 10^{-21}$ & $8\times 10^{-21}$ & $2\times 10^{-21}$ & $4\times 10^{-26}$ & $4\times 10^{-21}$ & $8\times 10^{-13}$ \\
$2i$ & $3\times 10^{-25}$ & $3\times 10^{-25}$ & $2\times 10^{-25}$ & $1\times 10^{-30}$ & $3\times 10^{-31}$ & $3\times 10^{-25}$ \\
$-2i$ & $3\times 10^{-25}$ & $3\times 10^{-25}$ & $2\times 10^{-25}$ & $1\times 10^{-30}$ & $3\times 10^{-25}$ & $3\times 10^{-31}$
\end{tabular}
}
\caption{Maple relative errors for 50-th derivatives of Bessel and Hankel functions at $\nu=3$ and selected $z$ values. Calculations at 30 digit precision.}\label{Table: Maple relative errors}
\end{table}

The derivatives of $K_{\nu}(z)$ were calculated very quickly, owing to the simple form of the integrals in Eq. \ref{Dn_Knu_x>0}. The $1000$-th derivative of $K_5(3)$ took only $0.9~\mathrm{s}$ (Maple) or $0.03~\mathrm{s}$ (Mathematica) to evaluate. In contrast, the more complicated integrals for $H_\nu^{(1)}(z)$ in Eq. \ref{DnH1Rez<0} were sufficiently slow that Eq. \ref{DnH1nu=DnJnu+i*DnYnu} was faster, despite requiring four integral evaluations rather than two. This faster method was adopted in the Maple code and is reflected in the timings above, but was not necessary in the much faster Mathematica code.

The theorems here apply for all real orders. Series expressions often are not valid for negative integer orders, e.g., in the work of Li and Qin \cite{LiQin}, but were easily evaluated here. For example, the 100th derivatives at $\nu=-3$ and $z=2$ were evaluated at comparable times and accuracies as for the $\nu=3$ results reported above.

\section{Conclusions \label{Section: Conclusions}}
We have obtained efficient algorithms for the numerical evaluation of the $n$-th derivative with respect to the order of Bessel functions. These algorithms are based on integral representations of Bessel functions in different regions of the complex plane.
As a by-product, we have obtained two new integral representations of the Hankel function $H_{\nu}^{(1)}(z)$ in Corollaries \ref{Hankel1_integral_Rez_positive} and \ref{Hankel1_integral_Rez_negative}.

We have numerically checked the expressions obtained throughout the paper with Mathematica and Maple. The corresponding Mathematica notebook is available at \url{https://shorturl.at/Xssbt}, and the Maple worksheet is available at the Maple Application Center \url{https://www.maplesoft.com/Applications/}.


\begin{thebibliography}{10}
\expandafter\ifx\csname url\endcsname\relax
  \def\url#1{\texttt{#1}}\fi
\expandafter\ifx\csname urlprefix\endcsname\relax\def\urlprefix{URL }\fi
\expandafter\ifx\csname href\endcsname\relax
  \def\href#1#2{#2} \def\path#1{#1}\fi

\bibitem{Dutka}
J.~Dutka, On the early history of {B}essel functions, Arch. Hist. Exact Sci.
  49~(2) (1995) 105--134.
\newblock \href {https://doi.org/10.1007/BF00376544}
  {\path{doi:10.1007/BF00376544}}.

\bibitem{Koronev}
B.~G. Korenev, Bessel functions and their applications, CRC Press, 2002.

\bibitem{Lebedev}
N.~N. Lebedev, Special {F}unctions and their {A}pplications, Prentice-Hall,
  Inc., 1965.

\bibitem{Fisher}
N.~I. Fisher, Statistical {A}nalysis of {C}ircular {D}ata, Cambridge University
  Press, 1995.

\bibitem{Watson}
G.~N. Watson, A {T}reatise on the {T}heory of {B}essel {F}unctions, Cambridge
  University Press, 1944.

\bibitem{NIST}
F.~W.~J. Olver, A.~B. Olde~Daalhuis, D.~W. Lozier, B.~I. Schneider, R.~F.
  Boisvert, C.~W. Clark, B.~R. Miller, B.~V. Saunders, H.~S. Cohl, M.~A.
  McClain, \href{https://dlmf.nist.gov/}{Nist {D}igital {L}ibrary of
  {M}athematical {F}unctions}, \url{https://dlmf.nist.gov/}, Release 1.2.8 of
  2026-09-15.
\newline\urlprefix\url{https://dlmf.nist.gov/}

\bibitem{Apelblat}
A.~Apelblat, Bessel and {R}elated {F}unctions. {T}heoretical {A}spects:
  {M}athematical {O}perations with {R}espect to the {O}rder, Vol.~1, De
  Gruyter, 2020.

\bibitem{Oberhettinger}
W.~Magnus, F.~Oberhettinger, R.~P. Soni, Formulas and theorems for the special
  functions of mathematical physics, Vol.~52, Springer-Verlag, 1966.

\bibitem{ReflectionJL}
J.~L. Gonz{\'a}lez-Santander, Reflection formulas for order derivatives of
  {B}essel functions, Results Math. 78~(1) (2023) 30.
\newblock \href {https://doi.org/10.1007/s00025-022-01804-3}
  {\path{doi:10.1007/s00025-022-01804-3}}.

\bibitem{VanderPol}
B.~Van Der~Pol, On the operational solution of linear differential equations
  and an investigation of the properties of these solutions, Lond. Edinb.
  Dublin Philos. Mag. J. Sci. 8~(53) (1929) 861--898.
\newblock \href {https://doi.org/10.1080/14786441208564951}
  {\path{doi:10.1080/14786441208564951}}.

\bibitem{Humbert}
P.~Humbert, Bessel-integral functions, P. Edinburgh Math. Soc. 3~(4) (1933)
  276--285.
\newblock \href {https://doi.org/10.1017/S0013091500027358}
  {\path{doi:10.1017/S0013091500027358}}.

\bibitem{ApelblatPaper}
A.~Apelblat, N.~Kravitsky, Integral representations of derivatives and
  integrals with respect to the order of the {B}essel functions {$J_{\nu}(t)$},
  {$I_{\nu}(t)$}, the anger function {$\mathbf{J}_{\nu}(t)$} and the integral
  {B}essel function {$Ji_{\nu}(t)$}, IMA J. Appl. Math. 34~(2) (1985) 187--210.
\newblock \href {https://doi.org/10.1093/imamat/34.2.187}
  {\path{doi:10.1093/imamat/34.2.187}}.

\bibitem{BrychovNew}
Y.~A. Brychkov, Higher derivatives of the {B}essel functions with respect to
  the order, Integr. Transf. Spec. F. 27~(7) (2016) 566--577.
\newblock \href {https://doi.org/10.1080/10652469.2016.1164156}
  {\path{doi:10.1080/10652469.2016.1164156}}.

\bibitem{BesselJL}
J.~L. Gonz{\'a}lez-Santander, Closed-form expressions for derivatives of
  {B}essel functions with respect to the order, J. Math. Anal. Appl. 466~(1)
  (2018) 1060--1081.
\newblock \href {https://doi.org/10.1016/j.jmaa.2018.06.043}
  {\path{doi:10.1016/j.jmaa.2018.06.043}}.

\bibitem{LiQin}
A.~Li, H.~Qin, Fast calculation of the derivatives of bessel functions with
  respect to the parameter and applications, Symmetry 15~(1) (2022) 64.
\newblock \href {https://doi.org/10.3390/sym15010064}
  {\path{doi:10.3390/sym15010064}}.

\bibitem{ReflectionxJL}
J.~L. Gonz{\'a}lez-Santander, A note on some relation formulae involving
  {B}essel functions, Integral Transforms and Special Functions 25~(12) (2014)
  992--997.
\newblock \href {https://doi.org/10.1080/10652469.2014.955642}
  {\path{doi:10.1080/10652469.2014.955642}}.

\bibitem{Atlas}
K.~B. Oldham, J.~Myland, J.~Spanier, An {A}tlas of {F}unctions: with {E}quator,
  the {A}tlas {F}unction {C}alculator, Springer, 2009.

\bibitem{Takahasi1973}
H.~Takahasi, M.~Mori, Double exponential formulas for numerical integration,
  Publications of the Research Institute for Mathematical Sciences 9~(3) (1973)
  721–741.
\newblock \href {https://doi.org/10.2977/prims/1195192451}
  {\path{doi:10.2977/prims/1195192451}}.

\end{thebibliography}
\end{document}